\documentclass{article}

\usepackage{amsfonts,xr,graphicx,amsmath,amssymb}

\def\C{\mathbb{C}}
\def\S{\mathbb{S}}
\def\Z{\mathbb{Z}}
\def\R{\mathbb{R}}
\def\N{\mathbb{N}}

\def\tilde{\widetilde}
\def\epsilon{\varepsilon}
\def\phi{\varphi}
\def\n{\hfill\break} \def\al{\alpha} \def\be{\beta} \def\ga{\gamma} \def\Ga{\Gamma} \def\ro{\rho}\def\de{\delta} \def\si{\sigma}\def\ze{\zeta}
\def\om{\omega} \def\Om{\Omega} \def\ka{\kappa} \def\la{\lambda} \def\La{\Lambda}
\def\de{\delta} \def\De{\Delta} \def\vph{\varphi} \def\vep{\varepsilon} \def\th{\theta}
\def\Th{\Theta} \def\vth{\vartheta} \def\sg{\sigma} \def\Sg{\Sigma}\def\ups{\upsilon}\def\ups{\upsilon}
\def\bendproof{$\hfill \blacksquare$} \def\wendproof{$\hfill \square$}
\def\holim{\mathop{\rm holim}} \def\span{{\rm span}} \def\mod{{\rm mod}}
\def\rank{{\rm rank}} \def\bsl{{\backslash}}
\def\il{\int\limits} \def\pt{{\partial}} \def\lra{{\longrightarrow}}
\def\noth{\varnothing}
\def\pa{\partial }
\def\ra{\rightarrow }
\def\sm{\setminus }
\def\ss{\subset }
\def\ee{\epsilon }
\def\beq{\begin{equation}}
\def\eeq{\end{equation}}
\def\ov{\over}
\def\ti{\tilde}
\def\div{\mbox{div}}
\def\supp{\mbox{supp}}
\def\tr{\mbox{tr}}
\def\const{\mbox{const}}

\newtheorem{theorem}{Theorem}[section]
\newtheorem{lemma}[theorem]{Lemma}
\newtheorem{proposition}[theorem]{Proposition}

\numberwithin{equation}{section}

\begin{document}\title{{ Navier-Stokes Equations in Complex Space }}
\author{{Nikolai Nadirashvili\thanks{Aix Marseille Universit\'e, CNRS, Marseille, France, nnicolas@yandex.ru } }}

\bigskip

\date{}
\maketitle
\def\n{\hfill\break} \def\al{\alpha} \def\be{\beta} \def\ga{\gamma} \def\Ga{\Gamma} \def\ro{\rho}\def\de{\delta} \def\si{\sigma} 
\def\om{\omega} \def\Om{\Omega} \def\ka{\kappa} \def\la{\lambda} \def\La{\Lambda}
\def\de{\delta} \def\De{\Delta} \def\vph{\varphi} \def\vep{\varepsilon} \def\th{\theta}
\def\Th{\Theta} \def\vth{\vartheta} \def\sg{\sigma} \def\Sg{\Sigma}\def\ups{\upsilon}\def\ups{\upsilon}
\def\bendproof{$\hfill \blacksquare$} \def\wendproof{$\hfill \square$}
\def\holim{\mathop{\rm holim}} \def\span{{\rm span}} \def\mod{{\rm mod}}
\def\rank{{\rm rank}} \def\bsl{{\backslash}}
\def\il{\int\limits} \def\pt{{\partial}} \def\lra{{\longrightarrow}}
\def\noth{\varnothing}
\def\pa{\partial } 
\def\ra{\rightarrow }
\def\sm{\setminus }
\def\ss{\subset }
\def\ee{\epsilon }
\def\beq{\begin{equation}}
\def\eeq{\end{equation}}
\def\ov{\over}
\def\ti{\tilde}
\def\div{\mbox{div}}
\def\supp{\mbox{supp}}
\def\tr{\mbox{tr}}
\def\const{\mbox{const}}

\bigskip
\bigskip

 {\em Abstract.} We prove global in time regularity of solutions of the Navier-Stokes equations defined in the complex space.
\bigskip

  AMS 2000 Classification:  35Q30

\tableofcontents
  
\section{Introduction}

We consider a motion of incompressible fluid in 3-dimensional space. The equation of motion

 \begin{equation}\label{NS}  {\pa w \over \pa t } - \nu \De w + w \cdot \nabla w -  \nabla p = f       \eeq
with the incompressibility condition
 \begin{equation}\label{div} \div\, w =\sum_{i=1}^3w^i_i=0
\eeq
is known as Navier-Stokes equations.   We assume that solutions of \eqref{NS} \eqref{div} are defined in $\R^3$ at the time interval $t\in (0,T)$. 
External force $f=f(x,t)$ is a smooth divergence free vector field.   
We will assume  that for any $t$ $w(x,t)$ is a periodic vector field defined on $\R^3 $ with the lattice of periods $a \Z^3, \, a>0$, in other words, we assume that
the solutions of \eqref{NS} \eqref{div} are defined  on $M\times (0,T)$, where $M$ is a torus $M=\R^3 / a \Z^3 $. For the system \eqref{NS} \eqref{div} 
we consider  solutions of the Cauchy problem 

 \begin{equation}\label{CP} w(x,0) =w^0(x)  \eeq
where $w^0$ is a divergence free $a \Z^3$ periodic vector field defined on $\R^3$.

For the initial data $w^0$ in $ L^2( \R^3) $  J. Leray in the fundamental work [Le] defined weak solutions to the Cauchy problem \eqref{NS}, \eqref{div},
\eqref{CP} and proved their existence. E. Hopf in [H] gave an alternative proof of the Leray's result, based on the Fadeo-Galerkin approximations to  solutions of the Navier-Stokes equations. Hopf's proof holds for general initial-boundary value problems, including the periodic boundary conditions.

Denote by $D_T$ the space of smooth solenoidal vector fields $\phi \in C^\infty (M\times [ 0,T] ) $ such that $\phi ( \cdot, T) =0$.
If $w$ is a sufficiently regular solution of the Navier-Stokes equations \eqref{NS} \eqref{div} in $ (M\times [ 0,T] ) $  then for every $\phi \in D_T$ 
the following equality holds
 
 \begin{equation}\label{WS}  \int_0^T \left(  (w, { \pa \phi \over \pa t } ) - \nu ( \nabla w, \nabla \phi ) - (w \cdot \nabla w, \phi )   \right)  dt = (w^0, \phi (\cdot ,0 ))
- \int^T_0 (f, \phi) dt \eeq

\medskip {\bf Definition. Leray-Hopf weak solutions.  } Let $w^0 \in L^2(M), \, f\in L^2(\Om \times (0,T) )$. A function $w: M\times [0,T) \to \R^3 $ is said to be a weak solution of \eqref{NS}, \eqref{div} if

\smallskip a) $ w\in L^2(0,T)(H^1(M)) \cap L^\infty (0,T) (L^2(M)) $

\smallskip b) $w$ verifies \eqref{WS}

\smallskip c) The energy inequality: for $t\in [0, T] $

$$ || w\cdot, t) ||^2_{L^2(M) } + 2\nu \int^t_0 || \nabla w ( \tau ) ||^2_{L^2(M) } d\tau \leq || w^0||^2_{L^2(M)} + 2\int^t_0 (w(\tau), f(\tau) )d\tau $$

\smallskip d) $ \lim_{t\to 0} || w( \cdot , t) -w^0 ||_{L^2(M)} =0$

\medskip  J. Leray proved the existence of the global, defined for all $t>0$, weak solutions of the Cauchy problem to Navier-Stokes equations.
The following theorem holds 

\begin{theorem} \label{L}  Let $w^0\in L^2(M), \; \div \, w^0 = 0 $, $ f\in L^2(\Om \times (0,T) )$ and for any $T>0, \,  \div \, f = 0$ in the sense of distributions. Then there exists at least one Leray-Hopf weak solution of the problem
\eqref{NS}, \eqref{div}, \eqref{CP} in $M\times [0, \infty )$.
\end{theorem}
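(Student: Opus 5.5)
We will follow the Faedo--Galerkin scheme of Hopf. On the torus $M$ the Stokes operator $P(-\De)$, with $P$ the Leray projection onto solenoidal fields, has an orthonormal basis of eigenfunctions of $L^2_\sigma(M)$. These are the divergence free trigonometric fields $e_k(x)$, with $k$ ranging over the dual lattice $(2\pi/a)\Z^3$. We first remove the mean: the mean of $w$ evolves by the ODE $\frac{d}{dt}\bar w=\bar f$. We then work in the subspace of mean-zero fields, where Poincar\'e's inequality holds. For each $N$ let $V_N$ be the span of the first $N$ eigenfunctions and $P_N$ the orthogonal projection onto $V_N$. We seek $w_N(t)=\sum_{j\le N} c_j^N(t)e_j$ solving
$$\frac{d}{dt}(w_N,e_j)+\nu(\nabla w_N,\nabla e_j)+(w_N\cdot\nabla w_N,e_j)=(f,e_j),\qquad w_N(0)=P_Nw^0 .$$
(We read the sign of the pressure term in \eqref{NS} as immaterial; the pressure disappears after projection.) This is a finite system of ODEs with quadratic right-hand side and an $L^2$-in-time forcing. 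Carath\'eodory's theorem gives a local absolutely continuous solution.

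The next step is the a priori energy bound. Since $\div\, w_N=0$, periodicity gives $(w_N\cdot\nabla w_N,w_N)=0$. Testing with $w_N$ itself yields
$$\frac{d}{dt}\|w_N\|^2+2\nu\|\nabla w_N\|^2=2(f,w_N)\le \|f\|^2+\|w_N\|^2,$$
and Gronwall's inequality bounds $\|w_N\|_{L^\infty(0,T;L^2)}$ and $\|w_N\|_{L^2(0,T;H^1)}$ uniformly in $N$, for every $T$. In particular the ODE solution is global. By weak-$*$ compactness we extract a subsequence with $w_N\rightharpoonup w$ weakly in $L^2(0,T;H^1)$ and weakly-$*$ in $L^\infty(0,T;L^2)$. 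A diagonal argument over $T=1,2,\dots$ gives a single limit on $[0,\infty)$.

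The main obstacle is passing to the limit in the nonlinear term, which requires strong convergence. We will bound the time derivative. In three dimensions,
$$\|P_N(w_N\cdot\nabla w_N)\|_{H^{-1}}\le \|w_N\|_{L^4}^2\le C\|w_N\|^{1/2}\|\nabla w_N\|^{3/2},$$
so $\pa_t w_N$ is bounded in $L^{4/3}(0,T;H^{-1})$, uniformly in $N$ because $P_N$ is bounded on $H^{-1}$. The Aubin--Lions lemma, applied with $H^1\subset\subset L^2\subset H^{-1}$, then gives $w_N\to w$ strongly in $L^2(0,T;L^2(M))$. For $\phi\in D_T$ we first take $\phi$ a finite combination of the $e_j$ with smooth time coefficients, integrate by parts in $t$, and pass to the limit. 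Writing $(w_N\cdot\nabla w_N,\phi)=-(w_N\otimes w_N,\nabla\phi)$, strong $L^2$ convergence makes this term converge. Density of such $\phi$ in $D_T$ then yields \eqref{WS}, and also $w\in C_w([0,T];L^2)$.

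Finally we check the remaining properties of the definition. The energy inequality c) follows from the Galerkin energy identity by weak lower semicontinuity of the norms. The right-hand side converges because $P_Nw^0\to w^0$ and $\int_0^t(w_N,f)\to\int_0^t(w,f)$ by strong convergence. A priori this gives c) for a.e.\ $t$; weak continuity in time and lower semicontinuity extend it to every $t$. For d), weak continuity gives $w(t)\rightharpoonup w^0$. The energy inequality gives $\limsup_{t\to0}\|w(t)\|^2\le\|w^0\|^2$. Together these yield strong convergence $\|w(t)-w^0\|\to0$.
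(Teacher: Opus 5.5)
Your argument is correct and is essentially the route the paper relies on: the paper gives no proof of its own, stating the theorem on the authority of Leray and of Hopf's Faedo--Galerkin argument (which it notes covers periodic conditions). Your scheme — energy bound, an $L^{4/3}(0,T;H^{-1})$ bound on $\partial_t w_N$, Aubin--Lions compactness, then weak continuity and lower semicontinuity for c) and d) — is the standard modern form of Hopf's proof.

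There is one small slip. Once you split off the mean $\bar w(t)$, the equation for the mean-zero part acquires the extra term $\bar w(t)\cdot\nabla w_N$, which is skew-symmetric and hence harmless, but your Galerkin system omits it. The simplest fix is not to remove the mean at all and to keep the constant solenoidal modes (eigenvalue $0$) in the basis. Neither your Gronwall estimate nor the Gagliardo--Nirenberg bound, taken with the full $H^1$ norm, needs Poincar\'e.
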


Under certain conditions the weak Leray-Hopf solutions are known to be classical ($C^2$) solutions, consequently they are smooth and real-analytic functions,
see [G], [LR]. Our goal is to prove unconditional analyticity of Leray-Hopf weak solutions \eqref{NS} - \eqref{CP}. The analyticity of the solutions requires as
a necessarily condition the analyticity of the external force $f$. We say that $f$ is a uniformly real-analytic function on $M\times (0, \infty )$ if for any $t, \, f$
is a real-analytic function with a radius of analyticity $r(t)$. For any $T>0, \, r(t)$ is bounded from below on $(0, T), \, r(t) > c(T) >0$ and  $f(z, t)$ has a uniformly bounded holomorphic extension  in a slab $|\Im z |< c(T)$ in  $\C^3$, for all $t\in (0, T)$.

\begin{theorem} \label{NN} Assume that $f(x,t)$ is a solenoidal, uniformly real-analytic vector field defined on $M \times (0, \infty)$. Let $w^0\in L^2(M)$ and $\div \, w^0 = 0$ in the sense of distributions. Let $w(x,t)$ be a Leray-Hopf weak solution of \eqref{NS} - \eqref{CP} on $M\times [0, \infty )$. Then for any $t>0, \, w(x,t)$ is a real-analytic function on $M$. If $w^0 \in L^p(M), \, p>3$, then the weak solution $w$ to
\eqref{NS} - \eqref{CP} is unique and the map $t\to w(\cdot , t) $ is continuous as a map from $[0, \infty ) $ to $L^p(M) $.

\end{theorem}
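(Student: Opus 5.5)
The plan is to work with the holomorphic extension of the solution in the space variables, following the Foias--Temam approach via Gevrey classes. For a real-analytic periodic field $w(\cdot,t)$ with Fourier coefficients $\hat w(k,t)$, $k\in (2\pi/a)\Z^3$, analyticity in a slab $|\Im z|<\rho$ is equivalent to finiteness of the Gevrey norm
$$\|w\|_{\rho}^2=\sum_k |k|^2 e^{2\rho|k|}|\hat w(k,t)|^2 .$$
Writing $w(x+iy,t)=u+iv$, the complexified field satisfies \eqref{NS}, \eqref{div} with $x$ replaced by $z\in M\times\{|\Im z|<\rho\}$. I will therefore aim for an a priori bound on $\|w(t)\|_{\rho(t)}$ with a radius $\rho(t)>0$ that does not degenerate on $(0,T)$. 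This requires the hypothesis that $f$ extends holomorphically and boundedly to $|\Im z|<c(T)$.

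The steps, in order, are as follows.
\begin{enumerate}
\item \emph{Local smoothing.} By the energy inequality c) in the definition of Leray--Hopf solutions, $\nabla w\in L^2(0,T;L^2)$. Hence for a.e.\ small $t_0>0$ we have $w(t_0)\in H^1(M)$.
\item \emph{Local analyticity.} From an $H^1$ datum, the Foias--Temam Gevrey estimate gives a strong solution on $[t_0,t_0+\tau]$ with $\rho(t)=\min(t-t_0,c(T))$. It is obtained from the differential inequality
$$\frac{d}{dt}\|w\|_{\rho}^2+\nu\|\Delta w\|_{\rho}^2\le C\|w\|_{\rho}^6+C\|f\|_{\rho}^2 .$$
On this interval the solution coincides with $w$ by weak--strong uniqueness.
\item \emph{Continuation.} I will extend the interval as long as $\|w(t)\|_{H^1}$ stays bounded. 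Once that is settled, the restart argument of step~2 gives analyticity at every $t>0$ with a radius bounded below on compact time intervals.
\item \emph{The $L^p$ statement.} For $w^0\in L^p$, $p>3$, Kato's mild-solution theory (Fabes--Jones--Rivi\`ere) gives a unique solution in $C([0,\tau);L^p)$. Combined with global regularity from steps~1--3 and Serrin's weak--strong uniqueness criterion ($L^p$, $p>3$, lies in the Serrin class), this yields uniqueness among Leray--Hopf solutions and continuity of $t\mapsto w(\cdot,t)$ into $L^p$ on $[0,\infty)$.
\end{enumerate}

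The main obstacle is step~3: an a priori global bound on $\|w(t)\|_{H^1}$, or directly on the Gevrey norm, for arbitrary $L^2$ data. The nonlinear term gives only the cubic-type growth displayed in step~2. Closing this estimate globally is exactly the three-dimensional regularity problem; it does not follow from anything stated earlier in the paper, and only conditional criteria ([G], [LR]) are cited. What I would try first is to exploit the complex structure. In the imaginary direction, the pair $(u,v)$ satisfies a coupled system in which the $y$-derivatives are expressed through $x$-derivatives via the Cauchy--Riemann equations. One could then look for a monotone quantity, such as $\int_{|y|<\rho}(|u|^2-|v|^2)$ or an energy on the boundary of the slab, to control the shrinking rate $\dot\rho$ by energy-level quantities only.

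I expect this step to fail unless a genuinely new cancellation appears. I am not aware of any such mechanism, since the standard Gevrey inequality reduces to the classical $H^1$ blow-up alternative. Steps~1, 2 and~4 are routine by comparison.
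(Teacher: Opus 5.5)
\textbf{There is a genuine gap, and it is the whole theorem.} Your steps 1, 2 and 4 are standard local theory: an $H^1$ time slice from the energy inequality, Foias--Temam Gevrey smoothing, and Kato-type mild solutions with weak--strong uniqueness. Each of them is valid only up to a first possible singular time $t_1$. The theorem asserts precisely that no such $t_1$ exists. Step 3, which should supply this, contains no argument. As you note yourself, the Gevrey inequality you display closes only on a short interval and reduces to the classical $H^1$ blow-up alternative. So the proposal proves local analyticity and local $L^p$ well-posedness, not the statement. The missing ingredient is a new a priori estimate, not a routine continuation.

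The paper replaces your step 3 with an argument in the direction you sketch, but it treats the analyticity radius as an independent variable $r$ instead of a shrinking function of time. The steps are as follows.
\begin{enumerate}
\item It works with Galerkin approximants, which are trigonometric polynomials and hence entire functions.
\item It proves the complex flux energy identity (Theorem \ref{FGE}) and the identity for $\int_{\Om_r}(|u|^2-|v|^2)$ (Theorem \ref{FGE2}). The latter is exactly the quantity you propose. In the first identity, the Cauchy--Riemann equations and $\mathrm{div}_X U=\mathrm{div}_X V=0$ reduce the whole nonlinearity to a boundary flux $\frac1r\int_{\partial\Om_r}\tilde v|V|^2\,d\sigma$.
\item Subtracting the two identities expresses $2\nu\int_{\Om_r}|\nabla_X V|^2$ through $-\partial_t E$ and that flux, where $E(r,t)=\int_{\Om_r}|V|^2$.
\item Lemma \ref{BasicHol2} bounds the flux by $Cr^{-1/2}\|V\|_{L^2(\partial\Om_r)}\int_{\Om_r}|\nabla V|^2$. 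This is where solenoidality enters, via Rothschild--Stein estimates for a H\"ormander operator on the complex tangent directions of $\partial\Om$.
\item Lemma \ref{VX} converts $\int_{\partial\Om_r}V\cdot\partial_nV$ into $\nabla_X$-dissipation. The result is the backward-parabolic inequality \eqref{parce} in the variables $(r,t)$.
\item A putative $t_1$ is excluded by a barrier argument at $r=0$. Here $\partial_r^5E(0,t)$ is a multiple of the enstrophy, which by Leray's bound $\|\nabla w\|_{L^2}\ge C\nu^{3/4}(t_1-t)^{-1/4}$ must pass through the levels $K$ and $2K$. The profile $E(\cdot,t)$ is then compared with $y_k(r)=\frac{k}{120}r^5+r^{41/8}$.
\end{enumerate}

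Your skepticism should extend to this last step, since it carries all the weight. The function $r^5$ solves $y''=\frac4r y'$ exactly. Hence in \eqref{diffin} the only positive margin is $y_k''-\frac4r y_k'=\frac{41}{64}r^{25/8}$. On the other hand, $b\,r^{-3/2}y_k''(y_k')^{1/2}\ge \frac{b}{6\sqrt{24}}\,k^{3/2}r^{7/2}$. So $y_k$ can be a strict subsolution only for $r\lesssim b^{-8/3}k^{-4}$. It is not a subsolution on all of $(0,k^{-3/2})$, which is the interval where the paper asserts the endpoint crossing $E_r(\delta,t)>y_k'(\delta)$. As written, the barrier step does not close, so even the paper's mechanism would need repair before it could fill your gap.
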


\smallskip {\bf Remark.} For $w^0\in H^1(M)$ uniqueness and analyticity of the weak solution is known to be true on the interval $(0,T) $, where  $T$
is a sufficiently small constant depending on $\nu$ and $||w^0||_{H^1}$. That follows from Leray's ``Théorème de Structure"
and its generalizations, see [G], [LR].

\smallskip In proofs of  regularity results to the Navier-Stokes equations a standard approach  is to gain the regularity step by step: $L^p$ estimates - smoothness -
analyticity.  The analyticity of smooth solutions to the Navier-Stokes equations is known since the work of Masuda [Ma]. The first complete  results on the real-analyticity 
of  solutions to  problem \eqref{NS} - \eqref{CP} with the initial data in $L^p(M), \, p>3$, 
was given by  Grujić and Kukavica [GK]. For further  results see  [LR].

 In this paper we take an inverse approach to prove the regularity of the solutions. We initially consider the Navier-Stokes equations as a system defined
in the complex space $\C^3$. It becomes there an overdetermined system, its holomorphic solutions satisfy additionally  the Cauchy-Riemann equations,
and that help us to prove the desirable regularity of the solutions.

Possible generalizations and extensions of Theorem \ref{NN} and related topics we will discuss in the last section of the paper.

\smallskip 
 
{\it Acknowledgement. } We would like to thank Z. Grujic  and S. Vladuts  for very important discussions and remarks. 
 
\section{Complex Navier-Stokes equations }

Denote by $X$ and $Y$ the real and imaginary subspaces of $\C^3, \, B_r\ss\R^3=Y, \, r>0$  be a ball $|y| <r, \, M_s$ be a three-dimensional torus
$M = X / a \Z^3$,

$$ \Omega_r =   \{ (x+iy) \in \C^3:  x\in M , y\in B_r\}$$

Assume that a solution $w=( w_1,w_2,w_3)$  of \eqref{NS} - \eqref{CP} is defined on $M\times (0, t_0 )$ and has a holomorphic extension to $\Om_r\times ( 0, t_0 ) , \, r>0$.
Then for any $t\in (0,t_0), \, w(z,t), \, z\in \C^3$ is a solenoidal holomorphic vector field satisfying the complex Navier-Stokes equations, that is, for  any $t\in (0, t_0)$  $w$
satisfies  in  $\Om_r$ Cauchy-Rieman, divergence free and complex Navier-Stokes equations:

\beq \label{CR} { \partial w_i\over  \pa \bar z_j }=0 ,  \eeq
\beq \label{CDiv} \sum_{i=1}^3 {\partial w_i\over  \pa  z_i }=0 ,  \eeq
 \begin{equation}\label{CNS}  {\pa w_i \over \pa t } - \nu \sum^3_{j=1} { \pa^2 w_i \over \pa z^2_j } + \sum^3_{ j=1} w_j { \pa w_i \over \pa z_j } - 
 {\pa  p \over \pa z_i} = F ,       \eeq
 where $p( z, t)$ is a holomorphic function of $z$, $F(z,t)$ is a solenoidal holomorphic vector field in $\Om_r, \, F=f+ig$ and $g=0$ on the real subspace. Notice that if a solution of the real Navier-Stokes equations   are real-analytic functions they have 
 extensions to a complex domain which automatically satisfy equation \eqref{CR}, \eqref{CDiv}, \eqref{CNS}.
 
 Let $U(3)$ be the group of unitary transformations of $\C^3$. The stabilizer in $U(3)$ of real and imaginary subspaces of $\C^3$ is  the orthogonal 
 subgroup $O(3)$ of $U(3)$. Since $O(3)$ acts by orthogonal transformations on $X$ the equations \eqref{CR}, \eqref{CDiv} , \eqref{CNS}  are invariant under the action of $O(3)$.

 Denote $u=\Re w,\, v = \Im w ,\, \tilde w = ( \Im z , w), \, \tilde u = \Re \tilde w,\, \tilde v = \Im \tilde w $. Let $\zeta_j, \, \ze_j= \xi_j +\sqrt{-1} \eta_j,\, j=1,2,3$ be an orthonormal basis in $\C^3, \, \xi_j=x_j, \, \eta_j = y_j$.
  Let $y \in Y,\, y\neq 0$.   Choose $H_y \in O(3) <U(3)$ be such that $H_y \eta_1 = y/ |y| $. Let map $H_y$ transform the basis $\ze_i$ into $\ze^y_i$: 
  $H_y\ze_i = \ze^y_i =\xi^y_i + \sqrt{-1} \eta^y_i $. Denote by $(w^y_1,w^y_2,w^y_3)$ the vector field $w$ in the basis $\ze^y_i$. Denote
  
  $$ u^y_i = \Re w^y_i, \;\; v^y_i = \Im w^y_i $$
  Then for all $y\in B_r$
  
  $$ \div \, u^y =0, \;\; \div v^y=0 \;\;\; on \;\;\; L_y , $$
  where $L_y = \{ X + y \} $
  
  Multiplying the equation \eqref{CNS} on $\bar w $ and integrating it over $\Om_r$, using Cauchy-Riemann equations \eqref{CR} equations \eqref{CDiv},
  \eqref{CNS} and Stokes formula we get
  the following  ``complex flux energy  identity"  for the solutions of the complex Navier-Stokes equations:

$$ - \frac 12 \int_{\Om_r} { \pa | w|^2 \over \pa t} dxdy + \int_{\Om_r} (uf + vg) dxdy $$
$$=    \int_{B_r} 
\int _{M \times y} \Bigl( - \nu \Re \sum^3_{i,j=1}  \bar w^y_j  { \pa^2 w^y_j \over \pa ( \xi^y_i)^2 }
  - \Re \sum^3_{i=1} \bar w^y_i {\pa p \over \pa \xi^y_i }  $$
$$  + \Re \sum^3_{i,j=1} \bar w^y_iw^y_j{\pa w^y_i \over \pa \xi^y_j } \Bigr) dxdy = \nu  \int_{B_r} 
\int _{M \times y} | \nabla_X w |^2 dxdy $$
$$ -  \int_{B_r} \int _{M \times y} \left( \sum^3_{i=1} {\pa u^y_i \over \pa \xi^y_i } \Re p  +  \sum^3_{i=1} {\pa v^y_i \over \pa \xi^y_i } \Im p      \right) dxdy $$
$$ +   \int_{B_r}  \int _{M \times y} \sum^3_{i,j=1}\left(   u^y_iu^y_j {\pa u^y_i \over \pa \xi^y_j } -    u^y_iv^y_j {\pa v^y_i \over \pa \xi^y_j } 
+    v^y_iv^y_j {\pa u^y_i \over \pa \xi^y_j }  +   v^y_iu^y_j {\pa v^y_i \over \pa \xi^y_j }        \right) dxdy$$  
$$=  \nu  \int_{B_r} 
\int _{M \times y} | \nabla_X w |^2 dxdy $$
$$ +   \int_{B_r}  \int _{M \times y} \sum^3_{i,j=1}\left(   \frac 12 u^y_j {\pa (u^y_i)^2 \over \pa \xi^y_j } 
+    v^y_iv^y_j {\pa v^y_i \over \pa \eta^y_j }  +  \frac 12  u^y_j {\pa (v^y_i )^2 \over \pa \xi^y_j } 
+   u^y_iv^y_j {\pa u^y_i \over \pa \eta^y_j } \right) dxdy$$  
$$= \nu  \int_{\Om_r} 
 | \nabla_X w |^2 dxdy $$
 $$ -  \frac 12 \int_{B_r}  \int _{M \times y} \sum^3_{i,j=1}\left(   | u |^2 {\pa u^y_j\over \pa \xi^y_j } + |v|^2  {\pa u^y_j  \over \pa \xi^y_j } 
+  v^y_j{\pa |v^y_i |^2 \over \pa \eta^y_j }        + v^y_j{\pa |u^y_i |^2 \over \pa \eta^y_j }        \right) dxdy$$  
$$= \nu  \int_{\Om_r}   | \nabla_X w |^2 dxdy  -  \frac 12 \int_{B_r}  \int _{M \times y} \sum^3_{i,j=1} v^y_j {\pa (|u^y_i |^2 + |v^y_i |^2 )\over \pa \eta^y_j }      dxdy$$  
$$=  \nu \int_{\Om_r}   | \nabla_X w |^2 dxdy  -  \frac 12 \int_{B_r}  \int _{M \times y} \sum^3_{i,j=1} v^y_j {\pa (|u^y_i |^2 - |v^y_i|^2 + 2|v^y_i |^2 )   \over \pa \eta^y_j }        dxdy$$ 
$$= \nu  \int_{\Om_r}   | \nabla_X w |^2 dxdy  -  \frac 12 \int_{B_r}  \int _{M \times y} \sum^3_{i,j=1} v^y_j {\pa ( \Re ( w_i)^2 + 2|v^y_i |^2 )   \over \pa \eta^y_j }        dxdy$$ 
$$= \nu   \int_{\Om_r}   | \nabla_X w |^2 dxdy  +  \frac 12 \int_{B_r}  \int _{M \times y} \sum^3_{i,j=1} v^y_j \left({\pa \Im ( w_i)^2   \over \pa \xi^y_j }  
-2 {\pa (   |v^y_i |^2 )   \over \pa \eta^y_j }  \right)      dxdy$$   
  $$= \nu  \int_{\Om_r}   | \nabla_X w |^2 dxdy  -  \frac 12 \int_{B_r}  \int _{M \times y} \sum^3_{i,j=1} \left({\pa v^y_j   \over \pa \xi^y_j } \Im (w^2_i)
- 2{\pa (v^y_j  | v^y_i |^2  )  \over \pa \eta^y_j } \right)      dxdy$$   
  $$= \nu  \int_{\Om_r}   | \nabla_X w |^2 dxdy  +   \int_{B_r}  \int _{M \times y} \sum^3_{j=1} {\pa (v^y_j  | v |^2  )  \over \pa \eta^y_j }     dxdy$$   
  $$= \nu  \int_{\Om_r}   | \nabla_X w |^2 dxdy  +   \int_{M \times y}  \int _{B_r}  \div_Y ( v|v|^2 )  dydx    $$
  $$ = \nu  \int_{\Om_r}   | \nabla_X w |^2 dxdy  -   \int_{\pa \Om_r}   (n, v|v|^2 )  d\si  
  = \nu  \int_{\Om_r}   | \nabla_X w |^2 dxdy  + \frac 1r  \int_{\pa \Om_r}   \tilde v|v|^2   d\si$$  
  where $n$ is an outer normal to $\pa \Om_r $ and $d\si$ is the volume element of the surface. 

\medskip Thus we have the following theorem

\begin{theorem} \label{EI} Let $w$ be a solution of the Navier-Stokes equation \eqref{NS}  defined in $M \times (0, T) $ and having a holomorphic extension in
$\Om_r \times (0,T) $. Then for any $t\in (0,1) $ the following equality holds
$$ - \int_{\Om_r} { \pa | w|^2 \over \pa t} dxdy =  2 \nu \int_{\Om_r}   | \nabla_X w |^2 dxdy  + \frac 2r \int_{\pa \Om_r}   \tilde v|v|^2   d\si
- 2 \int_{\Om_r} (uf + vg) dxdy $$    
\end{theorem}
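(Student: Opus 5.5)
The plan is to pair \eqref{CNS} with $\bar w$, take real parts, integrate over $\Om_r$, and treat each term by integrating by parts only in the real directions $\xi^y_j$ along each slice $L_y\cap\Om_r = M\times\{y\}$. Each slice is a torus, so these integrations produce no boundary terms. The Cauchy--Riemann equations \eqref{CR} let us replace $\pa/\pa z_j$ by $\pa/\pa \xi_j$, or by $-\sqrt{-1}\,\pa/\pa\eta_j$, acting on holomorphic functions. This is what allows derivatives to be traded between the $X$ and $Y$ directions. The only boundary term that can survive comes from the $Y$-directions, i.e.\ from $\pa\Om_r = M\times \pa B_r$.

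\textbf{Step 1 (time, forcing, viscous terms).} The time term gives $\Re(\pa_t w\cdot\bar w)=\frac12\pa_t|w|^2$. The forcing term gives $\Re(F\cdot\bar w)=uf+vg$. For the viscous term, write $\pa^2_{z_j}=\pa^2_{\xi_j}$ on holomorphic functions and integrate by parts on each torus slice. This gives $\nu\int_{\Om_r}|\nabla_X w|^2$ with the correct sign.

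\textbf{Step 2 (pressure term).} Integrate by parts in $\xi$ on each slice. The pressure contribution becomes $\int \bigl(\sum_i \pa_{\xi_i}u_i\,\Re p+\sum_i\pa_{\xi_i}v_i\,\Im p\bigr)$. Now use \eqref{CDiv}: together with \eqref{CR} it gives $\div_X u=\div_X v=0$ on each $L_y$, so the pressure term vanishes. For the nonlinear and boundary terms it is convenient to work slice by slice in the rotated frame $\ze^y_i$, with $\eta^y_1$ pointing along $y$. The $O(3)$ invariance noted above guarantees that the equations keep the same form in this frame.

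\textbf{Step 3 (nonlinear term).} Expand $\Re(\bar w_i w_j\pa_j w_i)$ in $u,v$. Convert the terms containing $\pa_\xi v$ or $\pa_\xi u$ via Cauchy--Riemann ($\pa_{\xi_j}u=\pa_{\eta_j}v$, $\pa_{\xi_j}v=-\pa_{\eta_j}u$) so that one obtains $\frac12 u_j\pa_{\xi_j}(|u|^2+|v|^2)$ plus terms $v_j\pa_{\eta_j}(\cdot)$. The first piece integrates to zero on the slice by $\div_X u=0$. Then write $|u_i|^2+|v_i|^2=\Re(w_i^2)+2|v_i|^2$, and use Cauchy--Riemann again to turn $\pa_{\eta_j}\Re(w_i^2)$ into $-\pa_{\xi_j}\Im(w_i^2)$. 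That piece is killed by integrating by parts in $\xi$ and using $\div_X v=0$. Since $\div_Y v=-\div_X u=0$, what remains is $\int_M\int_{B_r}\div_Y(v|v|^2)\,dy\,dx$. By the divergence theorem in $y$ over $B_r$, this equals a boundary integral over $\pa\Om_r$ with outer normal $n=y/r$. Since $(y,v)=\Re(\Im z\cdot \bar w)$ in the notation $\tilde v$, this boundary integral is the $\frac1r\int_{\pa\Om_r}\tilde v|v|^2\,d\si$ term. Collecting all terms and multiplying by $2$ gives the stated identity.

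The main obstacle is the bookkeeping in Step 3. The cubic term must be reorganized exactly into a total $Y$-divergence, which requires using Cauchy--Riemann several times with the right signs. It also requires $v$ to be divergence-free in $Y$ as well as in $X$; this follows because $\div_Y v=-\div_X u=0$. I would first verify the reduction in the rotated frame for a single slice and then integrate over $y$.
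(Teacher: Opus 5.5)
Your proposal is correct and follows essentially the same route as the paper: pairing \eqref{CNS} with $\bar w$, integrating by parts along the torus slices, splitting $|u_i|^2+|v_i|^2=\Re(w_i^2)+2|v_i|^2$, removing the $\Re(w_i^2)$ part via Cauchy--Riemann and $\div_X v=0$, and turning the remaining $\div_Y(v|v|^2)$ into the boundary term with normal $y/r$. There are two harmless slips: Cauchy--Riemann gives $\div_Y v=+\div_X u$, not $-\div_X u$; and $\tilde v=(y,v)=\Im(\Im z\cdot w)$, whereas $\Re(\Im z\cdot\bar w)$ equals $(y,u)$.
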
 

 \smallskip  Now multiplying the equation \eqref{CNS} on $ w $ and integrating it over $\Om_r$ we get
  
  $$ - \frac 12 \int_{\Om_r} { \pa | u|^2 \over \pa t} dxdy +  \frac 12 \int_{\Om_r} { \pa | v|^2 \over \pa t} dxdy - \int_{\Om_r} (uf - vg) dxdy $$
  $$=    \int_{B_r} 
\int _{M \times y} \Bigl( - \nu \Re \sum^3_{i,j=1}   w^y_j  { \pa^2 w^y_j \over \pa ( \xi^y_i)^2 }
  - \Re \sum^3_{i=1}  w^y_i {\pa p \over \pa \xi^y_i }  $$
$$  + \Re \sum^3_{i,j=1}  w^y_iw^y_j{\pa w^y_i \over \pa \xi^y_j } \Bigr) dxdy = \nu  \int_{B_r} 
\int _{M \times y} ( | \nabla_X u |^2 - | \nabla_X v|^2 ) dxdy $$
$$ -  \int_{B_r} \int _{M \times y} \left( \sum^3_{i=1} {\pa u^y_i \over \pa \xi^y_i } \Re p  -  \sum^3_{i=1} {\pa v^y_i \over \pa \xi^y_i } \Im p      \right) dxdy $$
$$ +   \int_{B_r}  \int _{M \times y} \sum^3_{i,j=1}\left(   u^y_iu^y_j {\pa u^y_i \over \pa \xi^y_j } -    u^y_iv^y_j {\pa v^y_i \over \pa \xi^y_j } 
-   v^y_iv^y_j {\pa u^y_i \over \pa \xi^y_j }  -  v^y_iu^y_j {\pa v^y_i \over \pa \xi^y_j }        \right) dxdy$$  
$$=  \nu  \int_{B_r} 
\int _{M \times y} ( | \nabla_X u |^2 - | \nabla_X v|^2 ) dxdy $$
 $$ -  \frac 12 \int_{B_r}  \int _{M \times y} \sum^3_{i,j=1}\left(   | u |^2 {\pa u^y_j\over \pa \xi^y_j } - |v|^2  {\pa u^y_j  \over \pa \xi^y_j } 
+  v^y_j{\pa |v^y_i |^2 \over \pa \eta^y_j }        - v^y_j{\pa |u^y_i |^2 \over \pa \eta^y_j }        \right) dxdy$$  
$$=  \nu  \int_{B_r} 
\int _{M \times y} ( | \nabla_X u |^2 - | \nabla_X v|^2 ) dxdy -  \frac 12 \int_{B_r}  \int _{M \times y} \sum^3_{i,j=1} v^y_j {\pa (|u^y_i |^2 - |v^y_i |^2 )\over \pa \eta^y_j }      dxdy$$  
$$=  \nu  \int_{B_r} 
\int _{M \times y} ( | \nabla_X u |^2 - | \nabla_X v|^2 ) dxdy -  \frac 12 \int_{B_r}  \int _{M \times y} \sum^3_{i,j=1} v^y_j {\pa ( \Re ( w_i)^2 )   \over \pa \eta^y_j }        dxdy$$ 
$$=  \nu  \int_{B_r} 
\int _{M \times y} ( | \nabla_X u |^2 - | \nabla_X v|^2 ) dxdy +  \frac 12 \int_{B_r}  \int _{M \times y} \sum^3_{i,j=1} v^y_j \left({\pa \Im ( w_i)^2   \over \pa \xi^y_j }  
\right)      dxdy$$   
  $$=  \nu  \int_{B_r} 
\int _{M \times y} ( | \nabla_X u |^2 - | \nabla_X v|^2 ) dxdy -  \frac 12 \int_{B_r}  \int _{M \times y} \sum^3_{i,j=1} {\pa v^y_j   \over \pa \xi^y_j } \Im (w^2_i)$$   
  $$=  \nu  \int_{\Om_r} 
( | \nabla_X u |^2 - | \nabla_X v|^2 ) dxdy $$ 

\medskip Thus we have the following theorem

\begin{theorem} \label{EI2} Let $w$ be a solution of the Navier-Stokes equation \eqref{NS}  defined in $M \times (0, T) $ and having a holomorphic extension in
$\Om_r \times (0,T) $. Then for any $t\in (0,1) $ the following equality holds
  $$ - \int_{\Om_r} { \pa | u|^2 \over \pa t} dxdy +  \int_{\Om_r} { \pa | v|^2 \over \pa t} dxdy $$
  $$= 2 \nu  \int_{\Om_r} 
( | \nabla_X u |^2 - | \nabla_X v|^2 ) dxdy - 2 \int_{\Om_r} (uf - vg) dxdy $$
\end{theorem}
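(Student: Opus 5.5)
The plan is to pair the complex equation \eqref{CNS} with $w$ itself rather than with $\bar w$ (the pairing used for Theorem \ref{EI}), take real parts, integrate over $\Om_r$, and show that everything except the viscous term and the forcing term integrates to zero. Since $\Re(w\cdot\pa_t w)=\frac12\pa_t(|u|^2-|v|^2)$ and $\Re(w\cdot F)=uf-vg$, this produces the left-hand side of Theorem \ref{EI2}. We then rewrite the remaining terms along the real slices $L_y=\{X+y\}$. To do this we use the rotated frame $\ze^y_i$ given by $H_y\in O(3)$. The equations \eqref{CR}, \eqref{CDiv}, \eqref{CNS} are invariant under $O(3)$, so every identity may be computed in the frame adapted to the direction of $y$, exactly as in the derivation preceding Theorem \ref{EI}. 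The integration is over $y\in B_r$ and then over the torus $M\times y$.

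\textbf{Viscous and pressure terms.} By holomorphy, $\pa/\pa z_j$ acting on $w$ equals $\pa/\pa\xi_j$. Hence the viscous term is $-\nu\Re\sum w_j\,\pa^2_{\xi_i}w_j$. Integrating by parts on the torus $M\times y$, where no boundary terms arise, gives $\nu\Re\sum(\pa_{\xi_i}w_j)^2=\nu(|\nabla_Xu|^2-|\nabla_Xv|^2)$. For the pressure term we integrate by parts on each slice, which gives $\Re\big(p\,\div_X w\big)$ with $\div_X w=\sum\pa w_i/\pa z_i$. This vanishes by \eqref{CDiv}, since $\div u^y=\div v^y=0$ on $L_y$.

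\textbf{Nonlinear term.} We expand $\Re\sum w_iw_j\pa_{\xi_j}w_i$ into the four real products listed in the computation preceding the theorem. The terms with $u_j\pa_{\xi_j}$ are transport terms in $x$, and they integrate to zero against $\div_X u=0$. The cross terms $v_j\pa_{\xi_j}$ are converted, using the Cauchy–Riemann relations $\pa_{\xi}u=\pa_\eta v$ and $\pa_\xi v=-\pa_\eta u$, into $\eta$-derivatives. They combine into $-\frac12\sum v_j\,\pa_{\eta_j}\Re(w_i^2)$. Applying Cauchy–Riemann once more to the holomorphic function $w_i^2$, we get $\pa_{\eta_j}\Re(w_i^2)=-\pa_{\xi_j}\Im(w_i^2)$. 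Integrating by parts in $x$ then yields a multiple of $\div_X v\cdot\Im(w_i^2)$, which is zero.

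\textbf{Main obstacle.} The key structural point, and the step needing most care, is that in contrast to Theorem \ref{EI} no flux through $\pa\Om_r$ survives. In the $\bar w$ computation a leftover $|v|^2$ piece gave the divergence $\div_Y(v|v|^2)$. Here the sign pattern makes $|u|^2$ and $|v|^2$ appear as $|u|^2-|v|^2=\Re w^2$, so all $\eta$-derivatives convert into $\xi$-derivatives, and those integrate to zero on the compact torus. I would therefore track the signs of the four cubic terms carefully. I would also justify the slice-wise integrations by parts: this requires $w$ to be holomorphic, with enough regularity up to $\overline{\Om_r}$ (or on slightly smaller $\Om_{r'}$, followed by a limit) for the $x$-integrations to be legitimate. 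Multiplying the resulting identity by $2$ gives the statement.
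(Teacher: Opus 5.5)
Your proposal is correct and follows the paper's own proof essentially line by line. You pair \eqref{CNS} with $w$ instead of $\bar w$, integrate the viscous, pressure and $u_j\pa_{\xi_j}$ terms by parts on each torus slice $M\times y$, and use Cauchy--Riemann to turn the $v_j$ cross terms into $\pa_{\xi_j}\Im(w_i^2)$, which integrates to zero against $\div_X v=0$, so no flux through $\pa\Om_r$ survives.

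One small slip: the combined cross term is $+\frac12\sum v_j\pa_{\eta_j}\Re(w_i^2)$, not $-\frac12$ (the paper's display has the same slip). This is harmless because the term vanishes. In fact you can skip the Cauchy--Riemann round trip altogether, since $-\sum v_j\pa_{\xi_j}(u_iv_i)$ integrates to zero on each slice directly against $\div_X v=0$.
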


\medskip The last two theorems hold for a suitable Faedo-Galerkin approximations of the Navier-Stokes equations. We  define Faedo-Galerkin 
approximations such way that they a priori will have extension to $\C^3$ as entire functions and that will give us an advantage in application of
Theorem \ref{EI}.

Since the space of smooth solenoidal real vector fields on torus $M$ is invariant under the action of the Laplacian $-\De$ on $M$ it has a full system
of solenoidal eigenfunctions $v_i, \, i=1,\, 2,\, \dots $ with the eigenvalues $\la_1\leq\la_2\leq \dots, \, 0= \la_1=\la_2=\la_3<\la_4$. Since $v_i$ are trigonometric
polynomials  they have holomorphic extensions to $\C^3$. Thus we may assume that functions $v_i$ are defined in the whole space $\C^3$. Notice,
that from the equality
$$ -\De v_i =\la_i v_i \;\;\; on \;\;\; X  $$
follows that  $v_i$ satisfies on $\C^3$ the equation

 \beq \label{eig}   - \sum^3_{i=1} { \pa^2 v_i \over \pa z^2_i } = \la_i v_i \;\;\; on \;\;\; \C^3 \eeq

Denote
$$P_m = \span \{ v_1, \dots , v_m \} $$
In the basis $v_k$  on $M$ we write the equation of Fadeo-Galerkin approximations to the Navier-Stokes equations,

$$ {\pa w^m \over \pa t } - \nu \De w^m  + w^m \cdot \nabla w^m +  \al  = F^m ,      $$

$$F^m\in P_m, \, w^m \in P_m , \;\;\; \al \perp P_m $$

Let 

$$ \pi_m : L^2 (M) \rightarrow P_m $$
be the orthogonal projection of vector fields on $M$ to $P_m$ in $L^2(M)$. We will assume that the the projection $\pi_m$ acts on functions defined on $\C^3$. 
For a function $\phi (z), z\in \C^3$ define $\pi_m \phi$ such that

$$ \pi_m \phi (z) = \pi_m \phi (\cdot , y ) $$
where $z=x+iy$. From \eqref{eig} follows that for functions defined on $M$ and having holomorphic extension to $\C^3$, projection $\pi_m$
commutes with the operator of holomorphic extension  from $X$ to $\C^3$.  

Since $v_i$ is an orthogonal basis in $L^2$ we have for a solenoidal vector field $\phi $ on $M$ the monotonicity of $L^2$ norms

   \begin{equation}\label{mon} ||  \pi_m \phi ||_{L^2(M)}  \leq || \pi_{m+1} \phi  ||_{L^2(M)} \eeq
    and the convergence
    
       \begin{equation}\label{FGcon}   \pi_m \phi   \rightarrow   \phi  \eeq
       in $L^2(M)$.         Since $ \pi_m \phi $ are partial sums of the Fourier series of $\phi $ the last convergence is pointwise  provided the smoothness
       of function $\phi$.

       

The equation of Fadeo-Galerkin approximations to the Navier-Stokes equations we can rewrite as

 \begin{equation}\label{FG} {\pa w^m \over \pa t } - \nu \De w^m  + \pi_m (w^m \cdot \nabla w^m  ) = F^m \eeq
Then from \eqref{eig} follows that for holomorphic extension of $w_m$ to $\C^3$ the last equations read as
 \begin{equation}\label{FGNS}  {\pa w^m \over \pa t } - \nu \sum^3_{j=1} { \pa^2 w^m \over \pa z^2_j } + \pi_m \left(\sum^3_{ j=1} w^m_j { \pa w^m \over \pa z_j }  
 \right)
  = F^m       \eeq
  
  For the approximation of the solution $w$ of initial value problem \eqref{NS}, \eqref{div}, \eqref{CP} we add to the system \eqref{FGNS} the initial data
  \begin{equation}\label{FGin}  w^m( x,0) = w^0_m (x) = \pi_m (w^0(x) ) \eeq 
  where external force $F^m$ is a holomorphic extension of $f^m = \pi_m (f) $,
  $$ F^m = \pi_m(F) = f^m + ig^m $$
  
The following Proposition on the convergence of Faedo-Galerkin approximations essentially is known, but since some details of its proof depend on a particular
choice of the basis $v_i$ it's difficult to give an explicit reference of the result and we give a short proof of it.  
  
   \begin{proposition}\label{pr} Assume $w^0, f$ are smooth and   $w$ is a smooth solution of the Navier-Stokes equations  \eqref{NS} - \eqref{CP}  in $M\times [0,T]$.
 Let $w^m$ be a solution of \eqref{FGNS}, \eqref{FGin}, $w^m(x,0) = w(x,0)$. Then 
$   w^m \to w $ in $L^\infty H^{1}(M)$ as $m\to \infty$. 
 \end{proposition}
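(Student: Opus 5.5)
Since $w$ is smooth on $M\times[0,T]$, the plan is a direct error estimate rather than compactness. Write $w = \pi_m w + q_m$, with $q_m = (I-\pi_m)w$. Because the $v_i$ are solenoidal trigonometric eigenfunctions of $-\De$, the projection $\pi_m$ commutes with $\De$ and with $\pa_t$. Smoothness of $w$ then gives $\sup_{t\le T}\|q_m(t)\|_{H^s(M)}\to 0$ for every $s$, since the tails of the Fourier series of a smooth function decay uniformly in $t$ on the compact interval. So it suffices to show that $e_m = w^m - \pi_m w$ tends to $0$ in $L^\infty H^1$.

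Apply $\pi_m$ to \eqref{NS}. The pressure gradient is orthogonal to solenoidal fields, so it drops out, and we get
$$ \pa_t \pi_m w - \nu\De \pi_m w + \pi_m(w\cdot\nabla w) = f^m .$$
Subtracting this from \eqref{FG} gives
$$ \pa_t e_m - \nu \De e_m + \pi_m\bigl(w^m\cdot\nabla w^m - w\cdot\nabla w\bigr)=0, \qquad e_m(0)=0 ,$$
where we use $w^m(0)=\pi_m w^0$. Now write $w^m = w + e_m - q_m$ and expand the nonlinearity. The resulting terms are linear in $(e_m-q_m)$ with coefficients $w$ or $\nabla w$, plus the quadratic term $(e_m-q_m)\cdot\nabla(e_m-q_m)$.

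Take the $L^2$ inner product with $-\De e_m\in P_m$; the projection $\pi_m$ can be dropped since $\De e_m \in P_m$. This gives
$$\frac12\frac{d}{dt}\|\nabla e_m\|^2+\nu\|\De e_m\|^2 \le \bigl|( (e_m-q_m)\cdot\nabla w + w\cdot\nabla(e_m-q_m) + (e_m-q_m)\cdot\nabla(e_m-q_m), \De e_m)\bigr| .$$
The linear terms are bounded by $C(\|w\|_{W^{1,\infty}})(\|\nabla e_m\| + \|q_m\|_{H^1})\|\De e_m\|$. For the quadratic term we use the three-dimensional estimate $\|e\cdot\nabla e\|\le \|e\|_{L^\infty}\|\nabla e\|\le C\|\nabla e\|^{3/2}\|\De e\|^{1/2}$. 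Young's inequality then yields the Riccati-type inequality
$$ \frac{d}{dt}y \le C_1 y + C_2 y^3 + \ep_m(t), \qquad y=\|\nabla e_m\|^2,\; y(0)=0,$$
where $\ep_m = C\|q_m\|_{H^2}^2$ (covering the mixed terms with $q_m$) and $\sup_t\ep_m\to0$.

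The main obstacle is that this differential inequality only controls $y$ while $y$ stays small, because of the superlinear term $C_2y^3$. We handle it by a continuity (bootstrap) argument. Set $\de_m = T\sup\ep_m$. As long as $y\le1$ on $[0,t]$, Gronwall gives $y(t)\le \de_m e^{(C_1+C_2)T}$. For $m$ large this bound is less than $1/2$, so the set of times where $y\le1$ is open and closed in $[0,T]$, hence equals $[0,T]$. Therefore $\sup_{[0,T]}\|\nabla e_m\|^2\le \de_m e^{(C_1+C_2)T}\to0$. The $L^2$ part of $e_m$ is handled by Poincaré after separating the constant modes. The mean of $e_m$ over $M$ satisfies a trivial ODE, because the mean of $w\cdot\nabla w$ vanishes by periodicity and incompressibility, so that mean stays $0$. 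Combining with $q_m\to0$ gives $w^m\to w$ in $L^\infty(0,T;H^1(M))$.
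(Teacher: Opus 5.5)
Your argument is correct, and it takes a genuinely different route from the paper's.

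\textbf{Your route.} You run a direct stability estimate around the smooth solution. The error $e_m=w^m-\pi_m w$ starts from zero and is driven only by the Fourier tail $q_m=(I-\pi_m)w$. The $H^1$ energy estimate, Agmon's inequality and a continuity argument then give $\sup_{[0,T]}\|\nabla e_m\|^2\le C(w,T)\sup_t\|q_m\|_{H^2}^2$.

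\textbf{The paper's route.} The paper argues by compactness and uniform bounds:
\begin{enumerate}
\item It takes as known (from [G], [Te]) that $w^m\to w$ in $L^2(M\times[0,T])$.
\item From the energy identity and this convergence, it finds in every short time interval instants where $\|\nabla w^m\|_{L^2}$ is bounded uniformly in $m$.
\item It propagates these with the Riccati inequality $y'\le C\nu^{-3}y^3+C$ to a uniform $L^\infty H^1$ bound.
\item It upgrades this to a uniform $L^\infty H^2$ bound by testing with $\De^2 w^m$.
\item It combines the $H^2$ bound, the equation and the $L^2$ convergence to get $L^\infty L^2$ convergence, then interpolates to $L^\infty H^1$.
\end{enumerate}

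\textbf{Comparison.} Your approach is self-contained and gives an explicit rate in terms of the spectral tail of $w$. It avoids both the cited convergence result and the step of assembling uniform-in-$m$ bounds on all of $[0,T]$ from a Riccati inequality that only controls short intervals. The price is more regularity of the limit: $\sup_t\|w\|_{W^{1,\infty}}$ and uniform decay of $\|q_m\|_{H^2}$. The smoothness hypothesis on $M\times[0,T]$ supplies this. The paper's compactness argument needs much less of $w$, essentially uniform $H^1$ control of $w$ plus $H^2$ initial data.

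\textbf{Two harmless bookkeeping points.} First, the zero mean of $e_m$ is already needed when you bound $e_m\cdot\nabla w$ and apply Agmon, since that is what gives $\|e_m\|_{L^2}\le C\|\nabla e_m\|$. So that observation should come first, not at the end. Second, $q_m\cdot\nabla q_m$ contributes $\|q_m\|_{H^2}^4$ to the forcing. This is dominated by $\|q_m\|_{H^2}^2$ once $m$ is large.
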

 
 {\bf Proof.}  For any choice of Faedo-Galerkin basis, 
 
  \begin{equation}\label{lim} \lim_{m \to \infty} ||  w^m - w||_{L^2(M \times [0,T])} =0  \eeq
 see, [G], [Te].
For any 
 $t_1, t_2 \in [0,T], \, t_2 < t_1$,
 
 $$ 2 \int_M |w^m ( x, t_1) |^2 dx - 2\int_M |w^m ( x , t_2) |^2 dx  =  - \nu \int^{t_1}_{t_2} \int_M | \nabla w^m ( x , t) |^2 dxdt $$
 $$ - \int^{t_1}_{t_2} \int_M  w^m ( x, t) f^m (x, t) dxdt $$
 
 Thus from \eqref{lim}, follows that there is a constant $C>0$ such that for any $\de >0 $ and  
 $t_1, t_2 \in [\de,T], \,  t_1-t_2 > 2\de$, there is $N \in \N$, such that for $m>N$ there are $t'_1\in (t_1 - \de, t_1) , t'_2 \in ( t_2 -\de , t_2) $ such that
 
  $$ \int_M |w^m ( x, t'_1) - w( x, t'_1)|^2 dx + \int_M |w^m ( x , t'_2) - w(x, t'_2) |^2 dx \leq \de $$
and hence 
$$ \int^{t'_1}_{t'_2} \int_M | \nabla w^m ( x , t) |^2 dxdt  \leq C(t'_1-t'_2) $$
therefore  there is $\tau \in ( t'_1, t'_2) $ such that  
   \begin{equation}\label{wlim}  \int_M | \nabla w^m ( x , t) |^2 dxdt  \leq C \eeq

 Multiplying   \eqref{FG} on $\De w^m$, integrating over $M$ using that  $P^m$ is an invariant subspace of the Laplace operator, we get
 
 $$ \frac 12 {\pa \over \pa t} || \nabla w^m (\cdot , t ) ||^2_{L^2(M)} + \nu || \De w^m( \cdot , t) ||^2_{L^2(M)} = (w^m \cdot \nabla w^m, \De w^m)
 + (f^m, \De w^m) $$
 Thus 
  $$ \frac 12 {\pa \over \pa t} || \nabla w^m (\cdot , t ) ||^2_{L^2(M)} + \nu || \De w^m( \cdot , t) ||^2_{L^2(M)} \leq C \int_M |\nabla w^m |^3dx + \frac C\nu ||f^m ||^2_{L^2(M)} $$

 Denote
 
 $$ y(t) = || \nabla w^m ( \cdot , t) ||^2_{L^2(M)} $$
 
 Then from the Hölder and Sobolev inequalities we get
 
    \begin{equation}\label{SN-5} { \pa y \over \pa t } \leq \nu^{-3} C_1 y^3 + C_1  \eeq
 the similar inequality  in [He], or in [G], section 6. 
 
 Let $y(x)$ be a solution of Riccati equation \eqref{SN-5} define on $(0, x_1)$ and $y(0)=C$. There is a constant $\de >0$ such that $y(x) < 2C$ on the interval $(0, \de )$. Thus
 from inequality  \eqref{wlim} follows that 
 
   \begin{equation}\label{wmH1} ||  w^m  ||_{L^\infty (0,T)H^1(M) } <C \eeq
where constant $C$ is independent from $m$. 

 

 Multiplying   \eqref{FG} on $\De^2 w^m$, integrating over $M$ using that  $P^m$ is an invariant subspace of the Laplace operator, we get
 
  $$ \frac 12 {\pa \over \pa t} || \De w^m (\cdot , t ) ||^2_{L^2(M)} + \nu || \De \nabla w^m( \cdot , t) ||^2_{L^2(M)} $$
$$  \leq C \int_M |\nabla w^m | |\De w^m |^2dx  + \frac C\nu || \nabla f^m ||^2_{L^2(M)} $$
$$ \leq \frac C\vep \int_M |\nabla w^m |^2 dx + C \vep\int_M |\De w^m |^2dx  \int_M |\nabla w^m |^2 dx  + \frac C\nu || \nabla f^m ||^2_{L^2(M)} $$
for any $\vep >0$. Thus from inequality \eqref{wmH1} follows

$$ ||  w^m  ||_{L^\infty (0,T)H^2(M) } <C  $$

From the last inequality, the equation \eqref{FGNS} and \eqref{lim} follow the convergence $w^m \to w$ in $L^\infty (0,T) L^2 (M)$. The last convergence implies the convergence  $w^m \to w $ in $L^\infty H^{1}(M)$ as $m\to \infty$.

 
 

\medskip Let $w^m$ be a solution     \eqref{FGNS}.  For the holomorphic extension of $w^m$ to $\C^3$ denote  $v^m = \Im w^m, \, \tilde v^m= ( \Im z, v^m)$. 
       Since $w^m$ satisfies Cauchy-Riemann equations \eqref{CR}, divergence free \eqref{CDiv} equation,    \eqref{FGNS} equations and since $\pi_m$ commutes
       with the operator of holomorphic extension from $X$ to $\C^3$, we can apply to $w^m$ the same arguments as in the proofs of Theorems \ref{EI}, \ref{EI2}
 and we get ``complex flux energy  identity" and Theorem \ref{EI2}  for the field $w^m$

\medskip 

\begin{theorem} \label{FGE} Let $w^m$ be a solution of equation \eqref{FG}  defined in $M \times (0, T) $. Then for holomorphic extension of $w^m$  in
$\Om_r \times (0,T) $  the following equality holds
$$ - \int_{\Om_r} { \pa | w^m|^2 \over \pa t} dxdy =  2 \nu  \int_{\Om_r}   | \nabla_X w^m |^2 dxdy  +  \frac 2r \int_{\pa \Om_r}   \tilde v^m|v^m|^2   d\si $$
$$- 2 \int_{\Om_r} (u^mf^m + v^mg^m) dxdy $$    
\end{theorem}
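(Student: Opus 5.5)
The plan is to repeat, for the holomorphic extension of $w^m$, the computation that led to Theorem \ref{EI}. The only structural change is that the nonlinear term now appears as $\pi_m(w^m\cdot\nabla w^m)$ instead of $w^m\cdot\nabla w^m$, and the pressure gradient is absent.

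First, since $w^m(\cdot,t)\in P_m$ is a trigonometric polynomial, its holomorphic extension is entire. It therefore satisfies \eqref{CR} and \eqref{CDiv} on $\C^3$, the latter because $\div$ commutes with holomorphic extension. Because $\pi_m$ commutes with holomorphic extension, by \eqref{eig}, the extended field satisfies \eqref{FGNS} on $\Om_r\times(0,T)$. Both $w^m$ and $\pa_t w^m$ are smooth up to $\pa\Om_r$, so all the integrations by parts and the Stokes formula used in the derivation of Theorem \ref{EI} are justified with no regularity issues.

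Next, multiply \eqref{FGNS} by $\bar w^m$ and integrate over $\Om_r$, slicing along the tori $M\times\{y\}$ and using the rotated bases $\ze^y_i$. The time derivative, viscous and forcing terms are handled exactly as before. The viscous term $-\nu\Re\sum \bar w^m_j\pa^2_{\xi_i}w^m_j$ integrates by parts in $x$ on each slice to give $\nu\int|\nabla_Xw^m|^2$. This works because, by \eqref{CR}, $\pa_{z_j}=\pa_{x_j}$ on holomorphic functions, so the complex Laplacian equals the real Laplacian in $X$ on each slice $L_y$.

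The key point concerns the projected nonlinear term. On each slice $L_y$ the field $\bar w^m(\cdot,y)$ should be written in terms of the restriction of $w^m$ to that slice. The definition $\pi_m\phi(z)=\pi_m\phi(\cdot,y)$ means that $\pi_m$ acts slice-by-slice as an $L^2(M)$-orthogonal projection onto $P_m$. We would show that $w^m(\cdot,y)$ lies in the complexification of $P_m$. This holds because $P_m$ is spanned by exponentials $e^{ik\cdot x}$, times constant vectors, closed under the Laplacian eigenvalue cutoff, and the translate $x\mapsto x+iy$ multiplies each mode by $e^{-k\cdot y}$. Consequently
$$\int_M \bar w^m\cdot\pi_m(w^m\cdot\nabla w^m)\,dx=\int_M \bar w^m\cdot(w^m\cdot\nabla w^m)\,dx \quad\text{on each } L_y.$$
The pressure term, which in Theorem \ref{EI} vanished by \eqref{CDiv} after integration by parts, is simply absent here. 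Indeed, the $\al\perp P_m$ component is also orthogonal to $w^m(\cdot,y)$ slice-wise.

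After this reduction, the remaining chain of identities from the proof of Theorem \ref{EI} goes through verbatim for $w^m$. That chain converts the cubic terms via the Cauchy--Riemann relations $\pa_{\eta_j}u=-\pa_{\xi_j}v$, $\pa_{\eta_j}v=\pa_{\xi_j}u$ and the slice-wise divergence-free conditions into $\int_{B_r}\int_M\div_Y(v|v|^2)$, and then into the boundary term $\frac1r\int_{\pa\Om_r}\tilde v^m|v^m|^2d\si$. Multiplying by $2$ gives the stated identity, with the forcing term $-2\int(u^mf^m+v^mg^m)$ coming from $\Re(\bar w^m F^m)$.

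The main obstacle is the slice-wise orthogonality claim. One must check that the projection defined on $\C^3$ is compatible with the Hermitian pairing against $\bar w^m$, and not only the bilinear one. I would verify it directly on Fourier modes: $P_m$ is invariant under complex conjugation of coefficients, so its complexification is closed under conjugation, and the sesquilinear $L^2(M)$ projection then coincides with the complexified real projection.
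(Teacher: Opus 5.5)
Your proposal is correct and follows the paper's route. The paper's proof is just the remark that $w^m$ satisfies \eqref{CR}, \eqref{CDiv}, \eqref{FGNS} and that $\pi_m$ commutes with holomorphic extension, after which the computation of Theorem \ref{EI} is rerun; your slice-wise identity $\int_M \bar w^m\cdot\pi_m(w^m\cdot\nabla w^m)\,dx=\int_M \bar w^m\cdot(w^m\cdot\nabla w^m)\,dx$ is exactly the step that remark leaves implicit, and, as in the paper, it tacitly needs $P_m$ to be a union of full eigenspaces (e.g.\ $\la_m<\la_{m+1}$) so that the complex translates $w^m(\cdot+iy)$ stay in $P_m\otimes\C$.
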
 

\medskip 

\begin{theorem} \label{FGE2} Let $w^m$ be a solution of equation \eqref{FG}  defined in $M \times (0, T) $. Then for holomorphic extension of $w^m$  in
$\Om_r \times (0,T) $  the following equality holds
  $$ - \int_{\Om_r} { \pa | u^m|^2 \over \pa t} dxdy +   \int_{\Om_r} { \pa | v^m|^2 \over \pa t} dxdy = 2 \nu  \int_{\Om_r} 
( | \nabla_X u^m |^2 - | \nabla_X v^m|^2 ) dxdy $$ 
$$ - 2 \int_{\Om_r} (u^mf^m - v^mg^m) dxdy $$ 
\end{theorem}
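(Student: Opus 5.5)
We mirror the derivation of Theorem \ref{EI2} for the Galerkin field $w^m$. The identity is unaffected by the imaginary cut-off, since the computation is carried out slice by slice on each $L_y=\{X+y\}$, $y\in B_r$, and only afterwards integrated over $B_r$. We need three facts. First, the holomorphic extension of $w^m$ is an entire trigonometric polynomial in $z$. Its components therefore satisfy \eqref{CR}. Each $v_k$ is solenoidal on $X$, so the complexified divergence vanishes identically and \eqref{CDiv} holds on $\C^3$. Second, by \eqref{eig} and because $\pi_m$ commutes with holomorphic extension, \eqref{FGNS} holds on all of $\C^3$. Third, on each slice $L_y$ the map $x\mapsto w^m(x+iy)$ is $a\Z^3$-periodic, so integrations by parts in the $\xi^y$ directions produce no boundary terms.

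We multiply \eqref{FGNS} by $w^m$ (not its conjugate), take real parts, and integrate over $M\times\{y\}$ and then over $B_r$. The time-derivative term gives
$$\Re(\pa_t w^m\cdot w^m)=\tfrac12\pa_t(|u^m|^2-|v^m|^2).$$
The forcing term gives
$$\Re(F^m\cdot w^m)=u^mf^m-v^mg^m.$$
For the viscous term we rotate the basis by $H_y\in O(3)$, which preserves the equation. Because of \eqref{CR}, $\pa/\pa z_j$ acts as $\pa/\pa\xi^y_j$. Integrating by parts in $\xi^y$ on the torus then gives $\nu\int(|\nabla_X u^m|^2-|\nabla_X v^m|^2)$.

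The main step is the Galerkin nonlinearity, where we write $N:=\sum_j w^m_j\pa_j w^m$. We must show
$$\int_{\Om_r}\Re\bigl(\pi_m N\cdot w^m\bigr)\,dxdy=0.$$
For each fixed $y$, the map $x\mapsto w^m(x+iy)$ is a complex combination of the real basis fields $v_k(\cdot+iy)$. I would first show that, with the complex \emph{bilinear} pairing $\int_{M\times y}\phi\cdot\psi\,dx$, the operator $\pi_m$ (defined slicewise in the paper's sense) is symmetric. This gives
$$\int_{M\times y}\pi_m N\cdot w^m\,dx=\int_{M\times y}N\cdot w^m\,dx.$$
This is the delicate point. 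One must check that the slicewise projection agrees with the holomorphic continuation of the real projection. For trigonometric polynomials, a translate by $iy$ multiplies each Fourier mode by $e^{-k\cdot y}$. Hence $\pi_m$ acts diagonally on Fourier modes on every slice, and the bilinear pairing pairs mode $k$ with mode $-k$. Symmetry should follow once the span $P_m$ is closed under $k\mapsto -k$, which holds because the $v_k$ are real. If this fails for an unlucky choice of basis, I would restrict to $m$ for which $P_m$ is a union of full eigenspaces.

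With $\pi_m$ removed, the nonlinear term equals the unprojected one. We then repeat verbatim the chain of equalities preceding Theorem \ref{EI2}. The pressure-like terms vanish by \eqref{CDiv}. The convective terms collapse to
$$-\tfrac12\int v^m_j\,\pa_{\eta_j}\Re (w^m_i)^2,$$
which by Cauchy–Riemann is converted into $\xi$-derivatives of $\Im(w^m_i)^2$ and integrates to zero on each periodic slice.

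Collecting the terms and multiplying by $-2$ yields the stated identity. Unlike in Theorem \ref{FGE}, no boundary flux over $\pa\Om_r$ appears, because the conjugate-free pairing never generates the term $\div_Y(v|v|^2)$.
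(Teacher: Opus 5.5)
Your proposal is correct and follows the paper's route. The paper simply reruns the computation behind Theorem~\ref{EI2} for $w^m$ (pairing \eqref{FGNS} with $w^m$ without conjugation and using \eqref{CR}, \eqref{CDiv} and periodicity on each slice $L_y$), and your bilinear-symmetry step for $\pi_m$ makes explicit a point the paper passes over with its remark that $\pi_m$ commutes with holomorphic extension. Your hedge about an unlucky basis is unnecessary, and $\pi_m$ is diagonal in Fourier modes only when $P_m$ is a union of full eigenspaces: since mode $k$ pairs with mode $-k$, the weights $e^{-k\cdot y}$ and $e^{k\cdot y}$ cancel, so $\int_{M\times y}\phi\cdot\psi\,dx$ is independent of $y$ for holomorphic periodic fields, and reading $\pi_m$ in \eqref{FGNS} as the holomorphic continuation of the real projection (the reading under which \eqref{FGNS} holds on $\C^3$ for any basis) gives $\int_{M\times y}\pi_m N\cdot w^m\,dx=\int_M \pi_m N\cdot w^m\,dx=\int_M N\cdot w^m\,dx=0$ for every $m$ and every basis.
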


\section{Functional spaces. Hypoelliptic operators}

In this section we collect some auxiliary results.  First we introduce functional spaces
which we will use and then discuss some properties of degenerate elliptic and parabolic equations which we require. 

Let $L^p,\, 1\leq p \leq \infty $, be the standard Lebesgue norm  for functions defined on  spaces with measure. For a function $f$  on $\R^n$ we
define the mixed Lebesgue norm $L^{\bf p}$, the norm when ${\bf p} $ depends on the variable $x_1, \dots ,x_n$: for ${\bf p} =( p_1,\dots , p_n) $ set

$$ || f||_{L^{\bf p} } = || f ||_{L^{p_n}( x_1) \dots L^{p_1} ( x_n)} =  \left( \int_\R \dots \left( \int_\R | f( x_1, \dots , x_n) |^{p_1} dx_1 \right)^{p_2\over p_1} 
\dots dx_n \right)^{1 \over p_n}   $$

\smallskip Let $\si$ be a permutation of the set $\{ 1,2,\dots , n\} $. Denote $$\si (x) = ( x_{\si( 1)} , x_{\si (2)} , \dots , x_{\si ( n)} \} $$ For  a given
vector ${\bf p} $, let $\si_* $ and $\si^*$ be permutations of $\{ 1,2, \dots , n\} $ having components 
of $p_{\si_*} $ and  $p_{\si^*} $ in non-decreasing and non-increasing oder respectively,

$$   p_{\si_*( 1)} \leq p_{\si_* (2)} \leq \dots \leq p_{\si_* ( n)} , $$ 
$$   p_{\si^*( 1)} \geq p_{\si^* (2)} \geq \dots \geq p_{\si^* ( n)}  $$ 

\begin{theorem} \label{MN1} For any permutation $\si$ of  $\{ 1,2, \dots , n\} $ and for any function $f$ of $n$ variables we have
$$ || f( \si_*^{-1} (x) )||_{L^{\si_* {\bf p} } } \leq || f( \si^{-1} (x)) ||_{L^{\si {\bf p} }} \leq || f( \si^{*-1} (x)) ||_{L^{\si^* {\bf p} } } $$ 
\end{theorem}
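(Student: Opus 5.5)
The statement is the classical fact that in an iterated mixed Lebesgue norm it pays to integrate the larger exponents first: ordering the exponents non-decreasingly (innermost smallest) gives the smallest norm, and ordering them non-increasingly gives the largest. The plan is to reduce to the case $n=2$, where this is exactly Minkowski's integral inequality, and then pass from an arbitrary permutation to the sorted ones by adjacent transpositions.

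\emph{Two-variable case.} For $q\ge p\ge 1$ we want
$$ \Bigl(\int\Bigl(\int |f(s,t)|^{p}\,ds\Bigr)^{q/p}dt\Bigr)^{1/q}\le \Bigl(\int\Bigl(\int |f(s,t)|^{q}\,dt\Bigr)^{p/q}ds\Bigr)^{1/p}. $$
Here the left side integrates the smaller exponent $p$ first, and the right side integrates the larger exponent $q$ first. To see this, set $g=|f|^p$ and $r=q/p\ge 1$. The left side raised to the power $p$ is $\bigl\|\int g(s,\cdot)\,ds\bigr\|_{L^r(dt)}$. By Minkowski's integral inequality this is at most $\int \|g(s,\cdot)\|_{L^r(dt)}\,ds$, which is the right side raised to the power $p$. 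The cases $p=\infty$ or $q=\infty$ are elementary, since the essential supremum of an integral is at most the integral of the essential supremum.

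\emph{Adjacent transposition.} Take a permutation $\si$ in which two consecutive positions $k,k+1$ (counting from the innermost integration) carry exponents $p_{\si(k)}<p_{\si(k+1)}$. Swapping these two positions does not decrease the norm. To check this, fix all outer variables. The inner integrations over positions $1,\dots,k-1$ produce a function $h$ of the remaining variables, and this $h$ does not depend on the order of positions $k$ and $k+1$. Apply the two-variable inequality to $h$ in the two variables of positions $k,k+1$, with $p=p_{\si(k)}$ and $q=p_{\si(k+1)}$; this gives pointwise (in the outer variables) monotonicity of the partial norm. Since the outer $L^{p}$ norms are monotone in their integrands, the inequality survives the remaining integrations.

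\emph{Conclusion.} Any arrangement can be brought to the non-increasing order $\si^*$ by bubble sort. Each step swaps an adjacent pair that is out of order, i.e.\ with the smaller exponent inside, so by the transposition step each swap can only increase the norm. This gives the right inequality. Symmetrically, reaching the non-decreasing order $\si_*$ uses only swaps that move the smaller exponent inward, each of which decreases the norm, giving the left inequality. When exponents are equal, swapping them is an equality by Fubini–Tonelli, so ties are harmless.

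The main point needing care is the measurability and Tonelli bookkeeping in the transposition step: one must make sure the intermediate function $h$ is measurable and that the inequality holds pointwise almost everywhere in the outer variables. For nonnegative measurable $f$ this is standard by Tonelli, and the general case follows by applying it to $|f|$.
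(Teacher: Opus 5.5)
Your proof is correct: the two-variable case is exactly Minkowski's integral inequality applied to $|f|^p$ with $r=q/p\ge 1$, and propagating it through adjacent transpositions, with ties handled by Fubini–Tonelli, gives both inequalities in the direction the paper states (innermost smallest exponent gives the smallest norm). The paper does not prove this result itself but only cites [AF], and your argument is the standard proof of this permuted mixed-norm inequality.
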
 

\smallskip Theorem \ref{MN1} and the next theorem ``Mixed Hölder inequality" one can find in [AF].

\begin{theorem} \label{MN2} Let ${\bf r } \in [1, \infty )^m,  \, {\bf p }(1) , \dots , {\bf p }(N) \in [1, \infty ]^m$. 

1. If $f_k \in L^{ {\bf p} (k) }, \, k= 1, \dots , N $ and

$$ {1\over {\bf r }_j } = {1\over {\bf p }_j (1)} + \dots + {1\over {\bf p }_j (N)} ,$$ 
$j= 1, \dots ,m$ then
$$ f_1\dots f_N \in L^{\bf r} $$
and
$$|| f_1\dots f_N ||_{L^{\bf r}} \leq || f_1||_{L^{ {\bf p} (1) } }\dots ||f_N||_{L^{{\bf p}(N) }} $$

2. If $\th_1+   \dots + \th_N =1      $

$$ { 1 \over {\bf r}_j } = \sum^N_{k=1}  { \th_k \over {\bf p}_j(k) } ,$$
$j= 1, \dots ,m$ and if $f \in L^{ {\bf p}(k)} $, $k=1, \dots , N$, then $f\in L^{\bf r}$ and

$$ || f||_{L^{\bf r}} \leq || f ||^{\th_1}_{L^{{\bf p}(1)}} \dots || f ||^{\th_N}_{L^{{\bf p}(N)}} $$
\end{theorem}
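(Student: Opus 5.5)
The plan is to prove both parts of Theorem \ref{MN2} by induction on the number of variables $m$. In each inductive step I integrate out one variable at a time, using the ordinary one-dimensional Hölder inequality. The mixed norm is computed from the innermost variable $x_1$ with exponent $p_1$ outward to $x_m$ with exponent $p_m$. So the $x_1$-integration can be done first for fixed $(x_2,\dots,x_m)$, which reduces everything to a statement about functions of $m-1$ variables.

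\emph{Part 1.} Fix $(x_2,\dots,x_m)$ and apply the generalized Hölder inequality in $x_1$ with exponents satisfying $1/r_1=\sum_k 1/p_1(k)$:
$$\|f_1\cdots f_N(\cdot,x_2,\dots)\|_{L^{r_1}(x_1)}\le \prod_k F_k(x_2,\dots,x_m),\qquad F_k=\|f_k(\cdot,x_2,\dots)\|_{L^{p_1(k)}(x_1)}.$$
The $L^{\bf r}$ norm of $f_1\cdots f_N$ is the mixed $(r_2,\dots,r_m)$ norm of the left side in the remaining variables. Mixed norms are monotone in the integrand, so this is at most the mixed $(r_2,\dots,r_m)$ norm of $\prod_k F_k$. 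The relation $1/r_j=\sum_k 1/p_j(k)$ still holds for $j\ge2$, so the induction hypothesis in $m-1$ variables applies. It gives the bound $\prod_k\|F_k\|_{L^{(p_2(k),\dots,p_m(k))}}=\prod_k\|f_k\|_{L^{{\bf p}(k)}}$. The base case $m=1$ is the classical Hölder inequality. Infinite exponents are handled with the usual conventions, since the one-dimensional Hölder inequality allows them.

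\emph{Part 2.} Write $|f|=|f|^{\th_1}\cdots|f|^{\th_N}$ and set $f_k=|f|^{\th_k}$. By the scaling identity $\||g|^{\th}\|_{L^{{\bf q}}}=\|g\|^{\th}_{L^{\th{\bf q}}}$, which is immediate from the definition by pulling out the powers layer by layer, we get $f_k\in L^{{\bf p}(k)/\th_k}$ with norm $\|f\|_{L^{{\bf p}(k)}}^{\th_k}$. The hypothesis $1/r_j=\sum_k \th_k/p_j(k)$ is exactly the exponent relation of Part 1 for the vectors ${\bf p}(k)/\th_k$. Part 1 then yields the interpolation bound. If some $\th_k=0$, that factor is dropped.

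The main obstacle is only bookkeeping. One must check that the monotonicity and the scaling identity are compatible with the nested order of integration, and that measurability of the partial norms $F_k$ holds (by Tonelli). No Minkowski-type exchange of the order of integration is needed, because all exponents are applied in the same order on both sides. So Theorem \ref{MN1} is not required here, although it would be needed if the factors' exponents came in different variable orders.
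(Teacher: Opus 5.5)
Your proof is correct. The paper gives no proof of this statement and only refers to [AF]; your argument is the standard one found there: apply the one-variable H\"older inequality layer by layer (innermost variable first, inducting on the number of variables), then deduce Part~2 from Part~1 via $f_k=|f|^{\theta_k}$, the scaling identity, and the exponents ${\bf p}(k)/\theta_k$. The statement tacitly assumes $\theta_k\ge 0$, and this is what gives ${\bf p}(k)/\theta_k\in[1,\infty]^m$, so you should say it explicitly when invoking Part~1.
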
 

\medskip Let $m\in \N,\, 1\leq p \leq \infty$. By $H^{m,p}$ we denote Sobolev space with the norm

$$ || f ||_{H^{m,p} ( \Om)} = \left( \sum_{ |\al | \leq m } || D^\al ||^p_{L^p} \right)^{1/p} $$
for a smooth function $f$ in a domain $\Om \ss \R^n$.

 For Sobolev semi-norm we take the leading derivative,

$$ | f |_{\dot H^{m,p} ( \Om)} = \left( \sum_{ |\al | = m } || D^\al ||^p_{L^p} \right)^{1/p} $$

The definition of $H^{m,p}$ can be extended for all $m\in \R$, see [LM1],  We denote $H^s =H^{s,2}$.












\smallskip  In the sequel we will use the following special partition of unity.

\smallskip {\bf Definition.} Partition of unity of rank $N$.  We say that 
$ \Phi = \{ \phi_1, \phi_2, \dots \} $
is a partition of unity on $\R^n $ (or more general, on a smooth manifold)  of rank $N$,  if $\phi_i ,   \, i=1,2, \dots$ are nonnegative smooth
on $\R^n$ functions, with compact supports, such that

$$ \sum \phi_i \equiv 1 \;\;\; on \;\;\; \R^n $$
and for any $i\in 1,2, \dots $

$$ \sharp \{ j: \phi_i\phi_j \neq  0\} \leq N $$

\smallskip  \begin{proposition} \label{PU} Let $ \Phi = \{ \phi_1, \phi_2, \dots \} $
is a partition of unity on $\R^n $ of rank $N$.  There is a  constant $C = C( N) >0 $ such that for any
smooth function $f$ with a compact support on $\R^n$ the following inequalities hold
$$ \sum^\infty_{i=1} || \phi_i f ||^2_{L^2}     \leq  || f ||^2_{L^2}  \leq C \sum^\infty_{i=1} || \phi_i f ||^2_{L^2}     $$
\end{proposition}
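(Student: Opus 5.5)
The plan is to prove the two inequalities separately. The left one uses only $0\le\phi_i\le 1$ and $\sum_i\phi_i\equiv1$. The right one uses the finite overlap (rank $N$) together with Cauchy--Schwarz.

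\emph{Left inequality.} Since the $\phi_i$ are nonnegative and sum to $1$, each satisfies $0\le \phi_i\le 1$, so $\phi_i^2\le\phi_i$ pointwise. Summing over $i$ gives
$$\sum_i \phi_i^2(x)\le \sum_i\phi_i(x)=1 .$$
Hence
$$\sum_{i}\|\phi_i f\|^2_{L^2}=\int_{\R^n}|f|^2\sum_i\phi_i^2\,dx\le \int_{\R^n}|f|^2dx .$$
Exchanging the sum and the integral is justified by monotone convergence, since all terms are nonnegative.

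\emph{Right inequality.} Fix $x$ and let $I(x)=\{i:\phi_i(x)\ne0\}$. I claim $\sharp I(x)\le N$. Indeed, if $I(x)$ is nonempty, pick some $i_0\in I(x)$. Then every $j\in I(x)$ satisfies $\phi_{i_0}\phi_j\ne0$, because the product is nonzero at the point $x$. The rank condition therefore gives $\sharp I(x)\le \sharp\{j:\phi_{i_0}\phi_j\neq0\}\le N$.

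Now apply Cauchy--Schwarz to the finite sum:
$$|f(x)|^2=\Big(\sum_{i\in I(x)}\phi_i(x)|f(x)|\Big)^2\le \sharp I(x)\sum_{i\in I(x)}\phi_i(x)^2|f(x)|^2\le N\sum_i|\phi_i f(x)|^2 .$$
Integrating over $\R^n$, again exchanging sum and integral by monotone convergence, yields the right inequality with $C=N$.

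I do not expect any real obstacle here. The only point needing care is reading the rank condition correctly. Taken literally, ``$\phi_i\phi_j\ne0$'' means the product is not the zero function. A nonzero value at the common point $x$ already makes the product a nonzero function, so the pointwise counting bound $\sharp I(x)\le N$ follows directly from the definition. Compact support of $f$ is needed only to guarantee that all the quantities involved are finite.
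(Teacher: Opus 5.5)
Your proof is correct and follows essentially the paper's argument. The paper expands $\|\sum_i\phi_i f\|_{L^2}^2$ into a sum over overlapping pairs and bounds $\phi_{i_1}\phi_{i_2}$ by a multiple of $\phi_{i_1}^2+\phi_{i_2}^2$, which is the same finite-overlap-plus-quadratic-inequality idea as your pointwise Cauchy--Schwarz over the at most $N$ active indices; it settles the left inequality by nonnegativity just as you do, and your pointwise version is slightly cleaner, giving the explicit constant $C=N$ where the paper states $2N$.
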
 

\smallskip {\it Proof.}  We say that the pair $\{ i,j\} \in S$ if $\phi_i\phi_j \neq 0$. Then

$$  || f ||^2_{L^2}  = \int_{\R^n} | \sum \phi_i f |^2 dx = \int_{\R^n} \sum_{ \{i_1,  i_2\} \in S} \phi_{i_1} \phi_{i_2} f^2 dx  $$
$$ \leq 2 \int_{\R^n} \sum_{\{i_1,  i_2\} \in S} (\phi^2_{i_1} + \phi^2_{i_2}) f^2 dx    \leq 2N\sum^\infty_{i=1} || \phi_i f ||^2_{L^2} $$
Since $\phi_i$ are nonnegative functions we have the left-hand side inequality.

\medskip  Let $\Ga $ be a compact smooth Riemannian manifold. There are several equivalent ways to define Sobolev spaces $H^s$ on $\Ga $.
Following [St], we define spaces $H^s$ using fractional Laplacian. Let $\De $ be the  Laplace-Beltrami operator on $\Ga $. Let
$ \psi_0, \psi_1, \psi_2, \dots ,$ be an orthonormal basis in $L^2(\Ga) $ of  eigenfunctions of $( -\De )$ with the eigenvalues

$$ 0 <\la_1 \leq \la_2 \leq \dots  $$
Then for a smooth function $u$ on $\Ga $, $s\geq 0$,

$$ (-\De )^s u = \sum_{i=1}^\infty \la_k^s (u, \psi_k) \psi_k $$
Correspondingly, for $s\geq 0$ we define the semi-norm

$$ | u |^2_{\dot H^{s} (\Ga )} = ( (- \De )^s u ,u) $$
and the norm

$$ || u||_{H^s(\Ga)} =  | u |^2_{\dot H^{s} (\Ga )}  + || u ||^2_{L^2(\Ga)} $$

 Let $G\ss \R^n$  be a domain with a smooth boundary.
Let $f$ be a function  defined on the boundary $\pa G$ Denote by $V$ the harmonic extension of $f$ to $G$, i.e., $V$ is the solution of the
Dirichlet problem in $G$ with the boundary data $f$. 

The following  theorems  evaluate the Sobolev norms of $f$, by norms of its harmonic  extensions to domain $G$. The theorems
hold for general domains in $\R^n$, however, in the sequel we will use these results for the domains 

$$ G = \{ (x,y) : x \in \R^n, \, y \in \R^m , \, |y|<1 \} ,$$
$$ G' = M_a \times B_1$$ 
Denote $\Ga = \pa G' $,

$$ \ga_t =\{ x\in G, \; dist (x,  \pa G) =t \} ,$$
$$ \Ga_t =\{ x\in G', \; dist (x, \Ga) =t \} ,$$
$t>0$.

\begin{theorem} \label{3.1}  Let $f$ be a smooth function on $\Ga$. Then
$$ || V ||_{\dot H^1(G')} \leq  C|| f ||_{ \dot H^{1/2} ( \Ga)} ,$$
$$ || V ||_{ H^1(G')} \leq C || f ||_{  H^{1/2} (\Ga)} ,$$
and if $f$ and consequently $V$ are odd functions of $y$, then
$$ C|| V ||_{\dot H^1(G')} \geq  || f ||_{ \dot H^{1/2} ( \Ga)} $$
\end{theorem}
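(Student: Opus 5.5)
Since $G'=M_a\times B_1$ is periodic in $x$ and $\Gamma=\partial G'=M_a\times S^{m-1}$, I will prove all three inequalities by separating variables and checking them mode by mode. Expand $f$ on $\Gamma$ in the product eigenbasis $e^{2\pi i k\cdot x/a}Y_\ell(\theta)$, where $k\in\Z^3$ and $Y_\ell$ is a spherical harmonic of degree $\ell$ on $S^{m-1}$. The Laplace--Beltrami eigenvalue of $\Gamma$ on such a mode is $\mu=|\kappa|^2+\ell(\ell+m-2)$, with $\kappa=2\pi k/a$. Hence, with the fractional-Laplacian definition of $H^s(\Gamma)$ adopted above, $|f|^2_{\dot H^{1/2}(\Gamma)}=\sum \mu^{1/2}|\hat f_{k\ell}|^2$. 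Because $G'$ has smooth boundary, I may assume the harmonic extension $V$ is smooth up to $\Gamma$. For each mode, $V=\sum \hat f_{k\ell}\,e^{i\kappa\cdot x}Y_\ell(\theta)\,R_{k\ell}(\rho)/R_{k\ell}(1)$, where $\rho=|y|$ and $R$ is the regular solution of the radial equation $R''+\frac{m-1}{\rho}R'-\frac{\ell(\ell+m-2)}{\rho^2}R-|\kappa|^2R=0$. For $\kappa\neq0$ this solution is $R=\rho^{1-m/2}I_{\ell+m/2-1}(|\kappa|\rho)$, a modified Bessel function; for $\kappa=0$ it is $\rho^\ell$.

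Next I compute the Dirichlet energy. The modes are $L^2$-orthogonal on $G'$, their gradients are orthogonal as well, and Green's formula gives
$$\|\nabla V\|^2_{L^2(G')}=\int_\Gamma V\,\partial_nV\,d\sigma=\sum |\hat f_{k\ell}|^2\,\frac{R_{k\ell}'(1)}{R_{k\ell}(1)}\,c,$$
where $c$ is a normalising constant. Everything therefore reduces to the Dirichlet-to-Neumann multiplier $\lambda_{k\ell}=R'(1)/R(1)$. The key claim is the two-sided comparison $c_1\,\mu^{1/2}\le \lambda_{k\ell}\le C\,(1+\mu^{1/2})$, together with $\lambda_{k\ell}\ge c_1\mu^{1/2}$ whenever $\ell\ge1$.

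I will establish this in two ways.
\begin{itemize}
\item The upper bound follows by variation rather than from Bessel asymptotics. The harmonic extension minimises Dirichlet energy, so I compare with the explicit competitor $\chi(\rho)\,e^{-(1-\rho)\sqrt{\mu}}\,e^{i\kappa\cdot x}Y_\ell$, where $\chi$ is a fixed cutoff equal to $1$ near $\rho=1$. Its energy is $O(1+\sqrt\mu)$ times the boundary mass, which gives the upper estimate.
\item The lower bound uses the Bessel inequality $\frac{I_\nu'(s)}{I_\nu(s)}\ge \frac{\sqrt{s^2+\nu^2}}{s+\dots}$, or equivalently the Riccati inequality for $q=R'/R$, namely $q'+\frac{m-1}{\rho}q+q^2=\frac{\ell(\ell+m-2)}{\rho^2}+|\kappa|^2$ with $q\ge0$. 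From it I get $q(1)^2+(m-1)q(1)\ge \mu$, hence $q(1)\gtrsim \sqrt{\mu}$ once $\mu\ge1$.
\end{itemize}

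Summing the modes gives the first inequality, with the seminorm controlled up to the $\mu=0$ mode, where both sides vanish. Adding the maximum principle bound $\|V\|_{L^2(G')}\le C\|f\|_{L^2(\Gamma)}$ gives the second. If $f$ is odd in $y$, only odd $\ell$ occur, so $\ell\ge1$. Then $\mu\ge m-1>0$ and the lower bound $\lambda\ge c\mu^{1/2}$ holds uniformly, which yields the reverse inequality. The oddness hypothesis is exactly what excludes the constant-in-$\theta$ modes, for which $\lambda_{k0}$ could be comparable to $|\kappa|$ but fails at $k=0$ (there $\lambda=0$ while $\mu=0$, which is harmless) — the real issue is uniformity. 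So the main obstacle is the uniform lower bound on the Riccati quantity $q(1)$ for all $(\kappa,\ell)$.

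I will try the Riccati comparison first, using the point $\rho_0$ where $q$ attains its maximum, or else a monotonicity argument for $\rho q(\rho)$. If that fails, I will fall back on the standard uniform estimates for $I_\nu'/I_\nu$.
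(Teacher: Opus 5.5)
Your route is the paper's. You expand $f$ in products of torus exponentials and spherical harmonics, take modified-Bessel radial parts, and use orthogonality and Green's formula to reduce everything to the multiplier $\lambda_{k\ell}=q(1)=R'(1)/R(1)$. Oddness then removes the $\ell=0$ modes, and the $L^2$ part comes from monotonicity in $\rho$. The paper instead uses subharmonicity of $\int_{M_a}|V(x,y)|^2dx$ in $y$; note that the maximum principle by itself gives no $L^2$ bound, while $0\le R(\rho)\le R(1)$ mode by mode does. Your radial solution $\rho^{1-m/2}I_{\ell+m/2-1}(|\kappa|\rho)$ is the right one; the paper's $I_{\sqrt{\mu_j}}(\sqrt{\nu_i}|y|)$ satisfies the two-dimensional radial equation, so it is not harmonic in $G'$. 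Your Dirichlet-principle upper bound also improves on the paper's bare appeal to a Bessel estimate.

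\textbf{The gap.} The step that fails is the lower bound, which is the entire content of the third inequality. At $\rho=1$ your Riccati identity reads $q(1)^2+(m-1)q(1)=\mu-q'(1)$. So the claimed inequality $q(1)^2+(m-1)q(1)\ge\mu$ is equivalent to $q'(1)\le 0$, and this is false.
\begin{itemize}
\item For $m=3$, $\ell=1$ the radial part is the spherical Bessel function $i_1(|\kappa|\rho)$. For large $|\kappa|$ one gets $q(\rho)=|\kappa|-\rho^{-1}+|\kappa|^{-1}\rho^{-2}+O(|\kappa|^{-2})$. Hence $q'(1)=1-2/|\kappa|+O(|\kappa|^{-2})>0$ and $q(1)^2+2q(1)=\mu-1+o(1)<\mu$.
\item For $\ell=0$ it fails for every $\kappa\ne0$: $q(1)=|\kappa|\coth|\kappa|-1$ and $q'(1)=1-|\kappa|^2/\sinh^2|\kappa|>0$.
\end{itemize}
The conclusion $q(1)\gtrsim\sqrt{\mu}$ for $\ell\ge1$ is true, but it cannot be read off from the sign of $q'$.

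\textbf{The fix.} Your fallback through $p=\rho q$ does work. Set $L_\ell=\ell(\ell+m-2)$ and define $p_*(\rho)>0$ by $p_*^2+(m-2)p_*=L_\ell+|\kappa|^2\rho^2$. The Riccati equation becomes $\rho p'=(p_*-p)(p_*+p+m-2)$. For $\kappa\ne0$, $\rho p'>0$ near $0$, and at any zero of $\rho p'$ its derivative equals $2|\kappa|^2\rho>0$; hence $p'\ge0$. It follows that $\ell=p(0)\le p\le p_*$, and moreover $p_*(\rho)\ge\rho\,p_*(1)$ for $\rho\le1$.

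Now suppose $p(1)<\frac12p_*(1)$. On $[\frac34,1]$ one has $p_*-p\ge\frac14p_*(1)$ and $p_*+p+m-2\ge\frac34p_*(1)$, so $p(1)\ge\frac3{64}p_*(1)^2$. This is impossible once $p_*(1)\ge\frac{32}3$. For smaller $p_*(1)$, use $p(1)\ge\ell\ge1$. Hence $q(1)\ge\frac3{32}p_*(1)\ge c_m\sqrt{\mu}$ for all $\ell\ge1$, uniformly in $\kappa$ and therefore in $a$.

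Alternatively, quote $sI_\nu'(s)/I_\nu(s)\ge\sqrt{(\nu+1)^2+s^2}-1$. This gives $q(1)\ge\sqrt{(\ell+m/2)^2+|\kappa|^2}-\frac m2\ge\frac2{m+2}\sqrt{\mu}$ for $\ell\ge1$.

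\textbf{Two smaller corrections.}
\begin{itemize}
\item The bound $\lambda_{k\ell}\ge c_1\sqrt{\mu}$ is not uniform over all modes: $\lambda_{k0}=|\kappa|^2/m+O(|\kappa|^4)$ as $|\kappa|\to0$. It is this, not the $\kappa=0$ mode, that oddness removes. For fixed $a$ one has $|\kappa|\ge2\pi/a$, so the reverse inequality actually holds for all $f$ with $C=C(a)$.
\item The bound $\lambda\le C(1+\sqrt{\mu})$ only yields the inhomogeneous estimate. For the homogeneous one, note that the competitor $e^{i\kappa\cdot x}$, constant in $y$, gives $\lambda_{k0}\le|\kappa|^2/m$. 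This covers $\ell=0$, $|\kappa|\le1$, while every other nonconstant mode has $\mu\ge1$.
\end{itemize}
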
 

{\bf Remark.} For domains in Euclidean space and higher oder elliptic operators results of similar type see in [LM1].  

\smallskip  {\it Proof.} Let $ \xi_0,\xi_1,\xi_2, \dots $ be the sequence of orthonormal eigenvalues of the Laplacian $-\De_X$ on torus $M_a$  
with the eigenvalues $0 =\nu_0 <  \nu_1 \leq \nu_2 \leq \dots $ and $ \phi_0, \phi_1, \phi_2, \dots $ be the sequence of orthonormal eigenvalues of the Laplacian $ -\De_{S^2}$ on the sphere 
$S^2$ with the standard round metric and with the eigenvalues $0 < \mu_0 \leq \mu_1 \leq \mu_2 \leq \dots $. Then functions

\beq \label{SN2.1}  \psi_{ij} = \xi_i(x) \phi_j (y) \eeq
where $i,j = 0,1,2, \dots $, are  eigenfunctions  of $\De $ on $\Ga $ with the eigenvalues $\la_{ij} = \nu_i + \mu_j $.

Define extensions of functions $ \psi_{ij} $ to $G' $ by

$$ \Psi_{ij} (x,y) = \psi_{ij} (x,y/ |y|)I_{\sqrt {\mu_j} }( \sqrt {\nu_i} |y| ) / | I_{\sqrt {\mu_j} }( \sqrt {\nu_i}  ) |,$$ 
where $I_{\sqrt {\mu_j} }$ is the modified Bessel function of the first kind. A simple estimate for Bessel functions  at point $t=1$,

$$ I'_{\sqrt {\mu_j} }( \sqrt {\nu_i } t ) \leq  C  \sqrt {\nu_i + \mu_j } I_{\sqrt {\mu_j} }( \sqrt {\nu_i } t )$$ 
Then functions $\Psi_{ij} $ are harmonic in $G'$ and they form an orthogonal system in $L^2(G') $.

Let $\{ij\} \neq \{nk\}$. Then

$$0 =  \int_{G'} ( \Psi_{ij} , \De \Psi_{nk}) dxdy = - \int_{G'} ( \nabla \Psi_{ij} , \nabla \Psi_{nk}) dxdy + \int_{\Ga} ( \Psi_{ij} ,\frac \pa {\pa y}  \Psi_{nk}) d\si$$
$$ = - \int_{G'} ( \nabla \Psi_{ij} , \nabla \Psi_{nk}) dxdy $$
Thus functions $\Psi_{ij} $ form an orthogonal system in  $ H^1(G') $.
Therefore

$$ \int_{G'} |\nabla \Psi_{ij} |^2 dxdy \leq C \sqrt {\nu_i + \mu_j } $$
and if $\Psi_{ij} $ is an odd function of $y$

$$C \int_{G'} |\nabla \Psi_{ij} |^2 dxdy \geq  \sqrt {\nu_i + \mu_j } $$

Taking the expansion of $f$ in the basis $\psi_{ij}$ we get the first and the third  inequalities of the theorem.

Since the integral

$$ \int_{M_a} | \Psi_{ij}(x,y) |^2 dx $$
is a subharmonic function on $Y$ then by mean value theorem for subharmonic functions from the first inequality of Theorem \ref{3.1}
follows the second inequality.

\begin{theorem} \label{3.2} There is a constant $C$ such that if $f$ is a smooth function on $\pa G$ with a compact support then
 \begin{equation}\label{in3.2} \int_0^\infty \int_{ \ga_t} t | \nabla V |^2 d\si dt \leq C \int_{\pa G} | f|^2 d\si \eeq
where $d\si $ is an element  of the surface measure.
\end{theorem}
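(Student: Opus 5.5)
We prove Theorem \ref{3.2} by reducing it, through the product structure of $G=\{(x,y): x\in\R^n,\ y\in\R^m,\ |y|<1\}$, to one-dimensional estimates for Fourier--Bessel modes. This mirrors the proof of Theorem \ref{3.1}.

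\emph{Step 1: reduction to the $y$-variable.} We have $\pa G=\R^n\times S^{m-1}$. On $\ga_t$ the distance to the boundary is $1-|y|$, so $\ga_t=\R^n\times\{|y|=1-t\}$ for $0<t<1$, and $\ga_t$ is empty for $t\ge1$. Hence the left-hand side of \eqref{in3.2} equals, up to the Jacobian of polar coordinates (harmless, since it is bounded by $1$), $\int_G (1-|y|)|\nabla V|^2\,dxdy$. Take the Fourier transform in $x$ (variable $k\in\R^n$) and expand in spherical harmonics $\phi_j$ on $S^{m-1}$ (eigenvalue $\mu_j$). Then $f$ decomposes into modes $\hat f_{j}(k)\phi_j(\theta)$. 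The harmonic extension of such a mode is $\hat f_j(k)\phi_j(\theta)\,R_{kj}(\rho)$, where
$$R_{kj}(\rho)=\rho^{1-m/2}I_{\nu}(|k|\rho)\big/I_\nu(|k|),\qquad \nu=\sqrt{\mu_j+(m/2-1)^2},$$
normalized so that $R_{kj}(1)=1$ (for $k=0$, $R=\rho^{\ell}$ with $\ell$ the degree). Different modes are orthogonal in $L^2$, and so are their gradients on each sphere $|y|=\rho$, because they carry distinct $x$-frequencies or distinct spherical harmonics. By Plancherel, the estimate therefore reduces to a uniform bound, over all $k$ and $j$,
$$\int_0^1 (1-\rho)\Big(|R'(\rho)|^2+(|k|^2+\mu_j\rho^{-2})R(\rho)^2\Big)\rho^{m-1}\,d\rho\le C .$$
If the theorem is needed for the torus domain $G'$, the same computation works with the Fourier series in $x$ replacing the transform.

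\emph{Step 2: the radial bound.} Set $\la^2=|k|^2+\mu_j$. We use two facts. First, $R$ is positive and increasing in $\rho$, since $\rho^{m-1}$ times the radial equation is a Sturm--Liouville equation with positive potential. Second, $R(\rho)\lesssim e^{-c\la(1-\rho)}$ for $\rho\ge1/2$; this follows from the standard uniform (Debye) asymptotics of $I_\nu$, or more cheaply from the maximum principle comparing $R$ with a barrier $e^{-c\la(1-\rho)}$ on $[1/2,1]$. The derivative satisfies $|R'|\lesssim\la e^{-c\la(1-\rho)}$. This is the same Bessel estimate $I'\le C\sqrt{\nu+\mu}\,I$ already used in Theorem \ref{3.1}, now applied at $\rho$ rather than at $1$ and combined with the decay. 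The integral over $[1/2,1]$ is then bounded by
$$\int_0^\infty s\,\la^2e^{-2c\la s}\,ds=\frac{1}{4c^2},$$
uniformly in $\la$. On $[0,1/2]$ the weight $1-\rho$ is bounded, so this part is controlled by $\int_{|y|<1/2}|\nabla V|^2$. By interior estimates and monotonicity, this is at most $C\la^2e^{-c\la}+C$, which is bounded.

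\emph{Main obstacle.} The hardest step is the decay estimate $R(\rho)\lesssim e^{-c\la(1-\rho)}$, uniformly in both $|k|$ and the angular index $j$. The two regimes behave differently: when $|k|\gg\sqrt{\mu_j}$ the decay is exponential in $|k|$, and when $\mu_j$ dominates it behaves like $\rho^{\sqrt{\mu_j}}$. I would handle both with the single barrier $\rho^{\al}e^{\be(\rho-1)}$, choosing $\al,\be$ with $\al+\be\sim\la$ so that the barrier is a supersolution of the radial ODE. Then $R$ is dominated by the barrier via the maximum principle, which gives the bound uniformly in both regimes.
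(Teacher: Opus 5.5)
Your proof is correct in outline, but it takes a genuinely different route from the paper. The paper gives no computation: it derives the statement from Dahlberg's $L^2$ Lusin area-integral estimate for harmonic functions in Lipschitz domains [D], with a direct proof in [JK]. That argument works for general domains but is heavy machinery, and its constant is not explicit. You use the product structure instead. By the coarea formula with $|\nabla(1-|y|)|=1$, the left side equals $\int_G(1-|y|)|\nabla V|^2\,dxdy$ exactly, so there is no Jacobian to discard. Plancherel in $x$ together with spherical harmonics in $y$ then reduce everything to a uniform bound on $Q(k,j)=\int_0^1(1-\rho)\big(R'^2+qR^2\big)\rho^{m-1}d\rho$, where $q=|k|^2+\mu_j\rho^{-2}$. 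This reduction is self-contained and carries over verbatim to $G'=M_a\times B_1$, which is the only domain the paper actually needs.

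Your Step 2, however, is much heavier than necessary, and its maximum-principle variants are not closed as written. Comparing with $e^{-c\la(1-\rho)}$ on $[1/2,1]$ presupposes the bound at $\rho=1/2$, which is what you are trying to prove. The barrier $\rho^\al e^{\be(\rho-1)}$ with $\al\ge0$ is not a supersolution near $\rho=0$ for the $\ell=0$ modes, because of the drift term $(m-1)\be/\rho$. For $m=3$ you can take $\al=-1$, $\be=|k|$ there, or simply rely on the Debye asymptotics.

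None of this is needed, though. The radial equation $(\rho^{m-1}R')'=\rho^{m-1}qR$ gives $(\rho^{m-1}(R^2)')'=2\rho^{m-1}(R'^2+qR^2)$. Integrate against $1-\rho$; the boundary terms vanish at both ends. Then your own observation $R>0$, $R'\ge 0$ gives
\[
Q(k,j)=\frac12\int_0^1\rho^{m-1}(R^2)'\,d\rho\le\frac12\int_0^1(R^2)'\,d\rho\le\frac12 R(1)^2=\frac12 .
\]
Undoing the decomposition, this is just Green's formula with the superharmonic weight $d=1-|y|$. On $\pa G$ we have $d=0$ and $\pa_n d=-1$, and $\De d=-\De|y|\le 0$. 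Hence $2\int_G d\,|\nabla V|^2=\int_G d\,\De(V^2)=\int_{\pa G}f^2\,d\si+\int_G V^2\De d\le\int_{\pa G}f^2\,d\si$; this can be justified mode by mode, exactly as above. That is the theorem with $C=1/2$, with no Bessel asymptotics at all.
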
 

\smallskip  Theorem \ref{3.2} follows from Dahlberg's  generalization of Lusin's area integral Theorem, [D], see  [M]. The direct proof of Theorem \ref{3.2} 
was given by D. Jerison and C.E. Kenig, [JK]. 

\smallskip {\bf Remark.}  Inequality  \eqref{in3.2} to  domain $ G' = M_a \times B_1$ one can be simply reduced to Theorem \ref{3.2}.

\medskip  A linear differential operator $\mathcal P$ defined in a domain $\Om \ss \R^n$ with smooth coefficients is called hypoelliptic if any solution
$u$ of the equation $\mathcal P u =f$ with a smooth function $f$ in $\Om$ is smooth in $\Om$. We will consider the second order hypoelliptic
operators. By Hörmander's theorem, [H1]  , for a second order hypoelliptic operator with a real principal part, the principal part must be a semi-definite quadratic form.
Except of the uniformly elliptic operators there are two important classes of the second oder hypoelliptic operators: the parabolic operators and the
Hörmander operators. Let $X_0,X_1, \dots , X_m$  be smooth vector fields defined in $\Om$. We will consider $X_i$ as the first oder differential operator,
namely a partial derivative in the direction $X_i$. Define

 \beq \label{Hor}  L = \sum^m_{i=1} X^2_i +X_0 \eeq
Operator $L$ called  Hörmander's operator if the rank of Lie algebra generated by $X_i$ at the tangent space $T_p\Om$ at each point $p\in \Om$ is equal to $n$.
Such operators by a theorem of Hörmander are hypoelliptic, [H1]. Different proofs of this fact were given by Oleinik and Radkevich, [OR], and by Kohn, [Ko].
The probabilistic interpretation of the results see in [KS].  

Following, [RS], we define the Sobolev norms induced by the vector fields $X_i$. Let $X_{j_1} \dots  X_{j_l} $ be a monomial with $0\leq j_s \leq m, \,
s=1,\dots , l$. We shall say that this monomial has weight $r$ if $r=r_1+2r_2$, where $r_1$ is the number of $X_j$'s that enter with $j$ between $1$ and $m$,
and $r_2$ is the number of $X_0$'s.

Now let $k$ be an integer, $1<p< \infty$. Denote by $S^p_k( \Om )$ the set of all $f\in L^p(\Om )$ such that

$$X_{j_1} \dots  X_{j_l} f \in L^p( \Om) $$ 
for all monomials of weight $\leq k$. For the norm we take

$$ || f ||_{S^p_k} = \sum || X_{j_1} \dots  X_{j_l} f ||_{L^p} , $$
where the sum is taken over all odered monomials of weight $\leq k$.

The following theorem of Rothschild and Stein, [RS], (see also [B]) plays  an important role in the proof of Theorem \ref{NN}.

\begin{theorem} \label{RS} Let $L$ be  Hörmander's operator \eqref{Hor}.  Let $\eta_1, \eta_2 \in C_0^\infty (\Om) $ are two cutoff functions such that $0\leq \eta_1 \leq \eta_2 \leq 1, \, \eta_2 =1$
on $\supp \,  \eta_1$. Then 

$$ || \eta_1 u ||_{S^p_{k+2}} \leq C \left( || \eta_2 Lu ||_{S^p_k} + || \eta_2 u||_{L^p} \right) ,$$ 
$k\geq 0, \, 1< p <\infty $.
\end{theorem}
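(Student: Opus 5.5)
The statement to prove is the Rothschild--Stein subelliptic estimate for H\"ormander operators, Theorem \ref{RS}. I would follow the standard scheme of [RS] and reduce it to a parametrix construction.

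The first step is lifting. Near each point $p\in\Om$ I add new variables and lift the vector fields $X_0,\dots,X_m$ to fields $\tilde X_0,\dots,\tilde X_m$ on $\Om\times\R^N$ that are free up to step $r$, where $r$ is the step at which the Lie brackets span. This works because the original fields satisfy the rank condition with some finite step $r$. The lifted fields are then approximated near each point by the left-invariant generators $Y_0,\dots,Y_m$ of the free nilpotent group $N_{m,r}$, with $X_0$ given weight $2$. The approximation is in the sense of Rothschild--Stein: $\tilde X_j = Y_j + R_j$, where $R_j$ is a vector field of local degree $\le$ weight$(j)-1$ with respect to the group dilations.

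The second step is to build the parametrix. On the group, the model operator $\sum Y_j^2+Y_0$ is homogeneous of degree $2$ and hypoelliptic by H\"ormander's theorem [H1]. By Folland's theory it therefore has a fundamental solution $K$ that is homogeneous of degree $2-Q$, where $Q$ is the homogeneous dimension. Composing $K$ with the map $\Theta(\xi,\eta)$ in the Rothschild--Stein coordinates gives an approximate inverse $P$ for $\tilde L$. It satisfies $\tilde L P = I + E$ and $P\tilde L = I+E'$, where the errors $E,E'$ are operators of type $\ge 1$, meaning their kernels are less singular by one unit of homogeneity.

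The third step, which I expect to be the main obstacle, is the $L^p$ boundedness. One must show that operators of type $0$ are bounded on $L^p$ for $1<p<\infty$. Concretely, this covers $\tilde X_I P$ with the monomial $\tilde X_I$ of weight $2$, and more generally $\tilde X_I$ of weight $k+2$ applied after $P$ and commuted past $k$ derivatives. These are singular integrals with respect to the quasi-metric defined by the vector fields. The space with the Carnot--Carath\'eodory quasi-distance and Lebesgue measure is of homogeneous type, because of the doubling property coming from the ball-box estimates. I would therefore apply Calder\'on--Zygmund theory on spaces of homogeneous type, checking the cancellation (which comes from homogeneity and the mean-zero property of kernels of degree $-Q$) and the H\"ormander-type smoothness condition for the kernels. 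Commuting $\tilde X_j$ past $P$ produces operators of the same type plus lower-type terms, and this gives the higher-$k$ estimates by induction on $k$.

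Finally I would localize. Writing $\eta_1 u = P\tilde L(\eta_1 u) - E'(\eta_1 u)$, the commutator $[\tilde L,\eta_1]$ contains only derivatives of weight $\le 1$. These terms, together with $E'$, are absorbed through a chain of intermediate cutoffs between $\eta_1$ and $\eta_2$, interpolating with $\|\eta_2 u\|_{L^p}$. To return to $\Om$, I would note that a function of $x$ alone is a function on the lifted space, and the $S^p_k$ norms of the lifted and original fields are comparable after integrating out the added variables. This yields $\|\eta_1 u\|_{S^p_{k+2}}\le C(\|\eta_2 Lu\|_{S^p_k}+\|\eta_2 u\|_{L^p})$.
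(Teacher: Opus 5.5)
Your proposal is correct and follows the same route as the paper. The paper does not prove this theorem itself; it quotes it from Rothschild--Stein [RS] (see also [B]). Your outline is precisely the [RS] argument: lifting to free fields with $X_0$ of weight $2$, approximation by the graded free nilpotent group, Folland's homogeneous fundamental solution as a parametrix with type-$1$ errors, $L^p$ bounds for type-$0$ operators, commutation for higher $k$, and localization and descent back to $\Om$.

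Two points in your sketch are thin and should be made explicit:
\begin{itemize}
\item Calder\'on--Zygmund theory on spaces of homogeneous type needs an $L^2$ bound as input. Smoothness and size estimates alone do not give it; here it comes from the uniform cancellation of the truncated kernels, via almost-orthogonality or a $T(1)$-type theorem.
\item The type-$1$ error is most cleanly absorbed by iterating the parametrix, or by using that type-$\lambda$ operators map $S^p_k$ to $S^p_{k+\lambda}$, rather than by bare interpolation.
\end{itemize}
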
 




\medskip   Now we go to the heat equation on $\R^{n+1}$. Denote

$$ P = { \partial \over \partial t} - \De $$
where $\De$ is the Laplace operator defined on $\R^n$. 

Two theorems below give us optimal estimates in the Sobolev spaces for solutions of the heat equations. 

\begin{theorem} \label{LSU} For every $n\in \N$ and every pair of numbers $p,q>1$ there exists a constant $C=C(n,p,q) >0$ such that every 
compactly supported smooth function $u\in C_0^\infty ( \R^{n+1})$  satisfies the estimate
$$ \int_{-\infty}^\infty || \pa u / \pa t ||^q_{L^p(\R^n )} dt  \leq  C  \int_{-\infty}^\infty || \pa u / \pa t - \De u ||^q_{L^p(\R^n)} dt  $$
\end{theorem}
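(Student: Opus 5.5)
This is the classical maximal $L^q(L^p)$ regularity for the heat equation. Set $g = \pa u/\pa t - \De u \in C_0^\infty(\R^{n+1})$. Since $u$ has compact support, it coincides with the Duhamel solution,
$$ u(\cdot,t) = \int_{-\infty}^t e^{(t-s)\De} g(\cdot,s)\, ds , $$
because $u$ vanishes for $t$ sufficiently negative and both sides solve the same Cauchy problem. Hence $\pa u/\pa t = g + \De u$, and it suffices to bound $\De u$. Write $\De u = T g$, where $T$ is the singular integral operator on $\R^{n+1}$ with kernel $K(x,t) = \De_x \Ga(x,t)\,\chi_{t>0}$ and $\Ga$ is the heat kernel. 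The whole claim then reduces to showing that $T$ is bounded on $L^q(\R; L^p(\R^n))$.

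First I would treat the diagonal case $p=q$. On the Fourier side $T$ is the multiplier $m(\xi,\tau) = -|\xi|^2/(i\tau + |\xi|^2)$. This symbol is bounded and homogeneous of degree zero with respect to the parabolic dilations $(\xi,\tau)\mapsto(\la\xi,\la^2\tau)$, and it is smooth away from the origin. So $T$ is bounded on $L^2$ by Plancherel. The kernel $K$ satisfies the standard Calder\'on--Zygmund size and H\"ormander smoothness conditions relative to the parabolic quasi-metric $|x| + |t|^{1/2}$, which one checks by differentiating the Gaussian. The Calder\'on--Zygmund theory on spaces of homogeneous type (or, equivalently, the Marcinkiewicz--Lizorkin multiplier theorem applied to $m$) then gives boundedness on $L^p(\R^{n+1})$ for $1<p<\infty$.

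Next I would pass to mixed norms $q\neq p$. View $T$ as a convolution in $t$ with the operator-valued kernel $k(t) = \De e^{t\De}\chi_{t>0}$ acting on $X = L^p(\R^n)$. The step above gives boundedness on $L^p(\R;X)$ for $q=p$. By the Benedek--Calder\'on--Panzone theorem, boundedness for all $1<q<\infty$ follows once we have the vector-valued H\"ormander condition
$$ \int_{|t|>2|s|} \| k(t-s) - k(t) \|_{X\to X}\, dt \le C . $$
To verify it I would use the analyticity of the heat semigroup on $L^p$. It gives $\| \De e^{t\De} \| \le C/t$ and $\| \De^2 e^{t\De} \| \le C/t^2$, so $\|k'(t)\| \le C t^{-2}$, and integrating yields the condition at once.

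The main obstacle is the $L^p$ boundedness of the kernel in the first step, namely the parabolic Calder\'on--Zygmund estimate; everything after it is routine extrapolation. For the paper's purposes one could simply cite this result (Ladyzhenskaya--Solonnikov--Ural'tseva in the $p=q$ case, and Benedek--Calder\'on--Panzone or Krylov for the mixed case), with the outline above serving as the justification.
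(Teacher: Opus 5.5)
Your argument is correct: the Duhamel reduction to $\De u = Tg$, the parabolic Calder\'on--Zygmund (or Marcinkiewicz--Lizorkin) bound in the diagonal case $p=q$, and the Benedek--Calder\'on--Panzone extrapolation in $q$ using $\|\De^2 e^{t\De}\|_{L^p\to L^p}\le C t^{-2}$ all go through as you describe. The paper gives no proof of its own; it only cites [LSU] for $p=q$ and Hieber--Pr\"uss [HP] for general $p,q$, and your sketch is the standard argument underlying those references.
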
 

\smallskip For $p=q$ the theorem was proved by Ladyshenskaya-Solonikov-Uralćeva, [LSU]. For all $p,q>1$ see, [HP]  

\begin{theorem} \label{S} For every $n\in \N$ and $p\geq 2$ there exists a constant $C=C(n,p)$ such that every compactly supported smooth function 
 $u\in C_0^\infty ( \R^{n+1})$  satisfies the estimate
 
 $$ || \nabla u (\cdot , T) ||_{L^p(\R^n) }   \leq  C \left( \int_{-\infty}^T || \pa u / \pa t - \De u ||^2_{L^p(\R^n)} dt \right)^{1/2} $$
 where $\nabla = \nabla_x$.
\end{theorem}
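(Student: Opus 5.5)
The plan is to prove Theorem \ref{S} by a Littlewood--Paley decomposition in $x$, combined with the Duhamel formula and a Schur-type estimate in time. The hypothesis $p\ge 2$ enters only at the last step, when the dyadic pieces are summed back together.

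\emph{Setup.} Put $f=\pa u/\pa t-\De u$. Since $u\in C_0^\infty(\R^{n+1})$ vanishes for very negative $t$, Duhamel's formula gives
$$u(\cdot,T)=\int_0^\infty e^{\tau\De}f(\cdot,T-\tau)\,d\tau .$$
Let $\Delta_j$, $j\in\Z$, be homogeneous Littlewood--Paley projections on $\R^n$. Let $\tilde\Delta_j$ be slightly fattened projections with $\tilde\Delta_j\Delta_j=\Delta_j$.

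\emph{Step 1: frequency-localized heat bound.} I will use the standard estimate
$$\|\nabla e^{\tau\De}\Delta_j g\|_{L^p}\le C\,2^j e^{-c4^j\tau}\|\tilde\Delta_j g\|_{L^p}\le C\,2^j e^{-c4^j\tau}\|g\|_{L^p},$$
valid for $1\le p\le\infty$. It holds because the operator $2^{-j}\nabla e^{\tau\De}\Delta_j$ has kernel $2^{jn}K(2^j\cdot)$, whose $L^1$ norm is $O(e^{-c4^j\tau})$. Set $g(\tau)=\|f(\cdot,T-\tau)\|_{L^p}$. Then
$$\|\Delta_j\nabla u(\cdot,T)\|_{L^p}\le C\int_0^\infty 2^j e^{-c4^j\tau}g(\tau)\,d\tau .$$

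\emph{Step 2: square-summing in $j$ (the main point).} The naive bound $\|\nabla e^{\tau\De}\|_{L^p\to L^p}\le C\tau^{-1/2}$ fails, because $\tau^{-1/2}\notin L^2(0,1)$; this is the obstacle the decomposition is meant to overcome. Instead, apply Cauchy--Schwarz with the weight $\tau^{\mp1/2}$:
$$\Bigl(\int 2^je^{-c4^j\tau}g\,d\tau\Bigr)^2\le\Bigl(\int_0^\infty 2^je^{-c4^j\tau}\tau^{-1/2}d\tau\Bigr)\Bigl(\int_0^\infty 2^je^{-c4^j\tau}\tau^{1/2}g(\tau)^2d\tau\Bigr).$$
The first factor is a constant independent of $j$, by the scaling $\tau\mapsto 4^{-j}\tau$. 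Summing over $j$ and using
$$\sup_{\tau>0}\sum_{j\in\Z}2^j\tau^{1/2}e^{-c4^j\tau}=\sup_{\tau>0}\sum_j s_je^{-cs_j^2}\le C,\qquad s_j=2^j\tau^{1/2},$$
which holds since the $s_j$ form a geometric sequence, I get
$$\sum_j\|\Delta_j\nabla u(\cdot,T)\|_{L^p}^2\le C\int_0^\infty g(\tau)^2\,d\tau=C\int_{-\infty}^T\|f\|_{L^p}^2\,dt .$$
This is precisely the statement that $u(\cdot,T)$ lies in $\dot B^{1}_{p,2}$.

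\emph{Step 3: passing from $\dot B^1_{p,2}$ to $\dot H^{1,p}$.} By the Littlewood--Paley theorem ($1<p<\infty$), $\|\nabla u(T)\|_{L^p}\le C\|(\sum_j|\Delta_j\nabla u(T)|^2)^{1/2}\|_{L^p}$. Because $p/2\ge1$, Minkowski's inequality in $L^{p/2}$ gives
$$\Bigl\|\sum_j|\Delta_j\nabla u(T)|^2\Bigr\|_{L^{p/2}}\le\sum_j\|\Delta_j\nabla u(T)\|_{L^p}^2 .$$
Combining this with Step 2 yields the theorem. For $p<2$ this last inequality reverses, which explains the restriction $p\ge2$. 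Since $u$ has compact support and hence $\nabla u(T)\in L^2\cap L^p$, the homogeneous decomposition converges and there is no low-frequency issue.
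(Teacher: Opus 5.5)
Your proof is correct, and it takes a genuinely different route from the paper's.

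The paper argues in physical space. It differentiates the equation, so that $v=\pa u/\pa x_i$ solves $\pa_t v-\De v=\pa_i f$. It tests this against $|v|^{p-2}v$ and integrates by parts to move the derivative off $f$, which bounds $\|v(\cdot,T)\|_{L^p}^p$ by a multiple of $\iint |f|\,|v|^{p-2}|\nabla v|\,dx\,dt$. Mixed H\"older then gives $\||v|^{p-2}\|_{L^\infty_tL^{p/(p-2)}_x}\|f\|_{L^2_tL^p_x}\|\nabla^2u\|_{L^2_tL^p_x}$. The last factor is controlled by the $L^2_tL^p_x$ maximal regularity estimate of Theorem \ref{LSU} (the mixed-exponent version due to Hieber--Pr\"uss). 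The resulting inequality $\|v(T)\|_{L^p}^p\lesssim\sup_{t\le T}\|v(t)\|_{L^p}^{p-2}\,\|f\|^2_{L^2L^p}$ is closed by evaluating it at a time in $(-\infty,T]$ where $\|v(t)\|_{L^p}$ is maximal. In that argument $p\ge2$ enters through the exponent $p/(p-2)$.

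You instead bound $u(\cdot,T)$ in $\dot B^1_{p,2}$ via Duhamel, the frequency-localized decay $e^{-c4^j\tau}$ and a Schur-type sum in time. You then use the embedding $\dot B^1_{p,2}\hookrightarrow\dot W^{1,p}$, valid for $p\ge2$.

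Your inputs are lighter: the Littlewood--Paley square-function theorem and explicit kernel bounds replace mixed-norm maximal regularity. You also need no nonlinear test function and no max-in-time bootstrap.

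Your argument also exposes the exact mechanism. Conversely, $u(t)=\sum_j\psi(4^j(T-t))\Delta_j h$ with a cutoff $\psi$, $\psi(0)=1$, reaches (up to a routine approximation) any $h\in\dot B^1_{p,2}$ with $\|f\|_{L^2L^p}\lesssim\|h\|_{\dot B^1_{p,2}}$. So the theorem is essentially equivalent to that embedding. This explains why it fails for $p<2$, and also for $p=\infty$. Hence the restriction $p<\infty$, tacit in your Step 3 and in the paper's use of Theorem \ref{LSU}, costs nothing.

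The paper's energy method, in turn, is the kind of argument that can transfer to settings without a Fourier transform, provided maximal regularity is available there.
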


\smallskip Theorem \ref{S} was proved in the preprint of Salamon, [S]. For  convenience of the reader we give a simple proof of this theorem. The initial step of the proof is similar to  one done in [S].  Notice,
that for $p=2$ the result is well known, [LM2].

\smallskip  {\em Proof.} Let

\beq \label{Peq1}  {\pa u \over \pa t } - \De u =f \eeq
Set

$$ v =  {\pa u \over \pa x_i } , $$
$i=1,\dots , n$. Then

$$ {\pa v(x,t) \over \pa t } - \De v(x,t) = {\pa f(x,t) \over \pa x_i } $$
Let $2 \leq p $. Multiplying the last equality on $|v|^{p-1} $ and integrating over $\R^n$ we get

$$ \frac1p \int_{\R^n}  {\pa |v|^p \over \pa t } dx + (p-1) \int_{\R^n} |v|^{p-2} |\nabla v |^2 dx +  \int_{\R^n} f {\pa |v|^{p-1} \over \pa x_i } dx  =0$$
Integrating the last equality from $-\infty $ to $T$ we get

\beq \label{Peq2}  || |v( \cdot , T)|^p ||_{L^1(\R^n)} \leq p(p-1) \int^T_{-\infty} \int_{\R^n} (|f(x , t)|+ |{\pa u \over \pa t}|) |v(x, t) |^{p-2} |\nabla v | dxdt \eeq

By Theorem \ref{LSU}  from the equation \eqref{Peq1}

$$ ||  \nabla v||_{L^2(-\infty ,T) L^p (\R^n)}  \leq C || f ||_{L^2(-\infty ,T) L^p (\R^n)}       $$
and
$$ ||  \pa u / \pa t||_{L^2(-\infty ,T) L^p (\R^n)}  \leq C || f ||_{L^2(-\infty ,T) L^p (\R^n)}       $$
By ``Mixed Hölder inequality", \eqref{Peq2} and the last two inequalities

$$ \int^T_{-\infty} \int_{\R^n} (|f(x , t)|+ |{\pa u \over \pa t}|) |v(x, t) |^{p-2} |\nabla v | dxdt  $$
$$ \leq   C || |v|^{p-2} ||_{ L^\infty (- \infty ,T)  L^{p/(p-2)} (\R^n) } || f ||^2_{L^2(-\infty ,T) L^p (\R^n)}       $$
Thus

$$ || |v( \cdot , T)|^p ||_{L^1(\R^n)} \leq  C || |v|^{p-2} ||_{ L^\infty (- \infty ,T)  L^{p/(p-2) }(\R^n) }|| f ||^2_{L^2(-\infty ,T) L^p (\R^n)}     ,  $$

$$ || v( \cdot , T) ||_{L^p(\R^n)} \leq  C  || v ||^{(p-2)/p}_{ L^\infty (- \infty ,T)  L^{p }(\R^n) } || f ||^{2/p}_{L^2(-\infty ,T) L^p (\R^n)}       $$
Since the right hand side of the desirable inequality is monotone non-decreasing function, we may assume without loss that for all $t<T$

$$ || v( \cdot , t) ||_{L^p(\R^n)} \leq  || v( \cdot , T) ||_{L^p(\R^n)} $$
Then
$$ || v( \cdot , T) ||^{2/p}_{L^p(\R^n)} \leq  C  || f ||^{2/p}_{L^2(-\infty ,T) L^p (\R^n)}       $$
and the theorem follows.

.

\section{Holomorphic solenoidal vector fields} 
Let $a,s,r >0$. Denote $M_s = \R^3 / s \Z^3$,

$$ \Omega^s_r =   \{ (x+iy) \in \C^3:  x\in M_s , y\in B_r\}$$
$$ \Omega = \Omega^a_1, \;\;\; \Om_r=\Om^a_r   $$

Denote by $X$ and $Y$ correspondingly the real and the imaginary subspaces of $\C^3$: $\Re Z = X=\{x_1,x_2,x_3\} ,\; \Im Z = Y= \{ y_1,y_2,y_3 \} $.

Let  

$$W(z)=(w_1{\pa \over \pa z_1},w_2{\pa \over \pa z_2},w_3{\pa \over \pa z_3})$$  
$w_i=u_i+\sqrt{-1} v_i$, be a holomorphic solenoidal vector field defined in $\C^3$, i.e., it  satisfies

$${ \partial w_i\over  \pa \bar z_i }=0   $$

\beq \label{divw0}\sum_{i=1}^3 {\partial w_i\over  \pa  z_i }=0   \eeq

Notice, that since $w_i$ are holomorphic functions then the equality \eqref{divw0}  on the real subspace of $\C^3$ implies it on the whole domain of holomorphy of $w$.

We assume that the vector field $W$ is periodic in $X$ with the periods $a\Z $, or equivalently, that $W$ is defined on $\Omega$.

We assume that $W$ is real on the real subspace of $\C^3$:

Define real vector fields

$$U=(u_1,u_2,u_3) , \; \; V=(v_1,v_2,v_3) , $$ 

\beq \label{Real}  V=0  \;\;\; on \;\; \; X \eeq

Vector fields $U$ and $V$ are divergence free on every totally real three-dimensional affine plane $L_y \ss \C^3$ :

$$ L_y= \{ (x+iy) : x\in X \} ,$$
$$ \div U = \sum {\partial u_i\over  \pa  x_i }= \div V = \sum {\partial v_i\over  \pa  x_i } = 0 \;\;\; on \;\;\;  L_y $$

Denote

$$\tilde w = (\Im Z, W)=  \tilde u + \sqrt{-1} \tilde v $$

 Note that $\tilde w $ is not a holomorphic function.

\begin{lemma}\label{BasicHol} Let $a \geq1$. 

$$ || \tilde v | V |^2 ||_{L^1 ( \pa \Om ) } \leq C || V||^2_{H^{1/2}( \pa \Om) } || V||_{L^2( \pa \Om) } $$
\end{lemma}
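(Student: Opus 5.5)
The plan is to reduce the estimate to a cubic integral on $\pa\Om$ and then move it into the interior of $\Om$. There we control it with the harmonic-extension bounds of Theorems \ref{3.1} and \ref{3.2}, which apply because $V$ is the imaginary part of a holomorphic field.

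\emph{Pointwise reduction.} On $\pa\Om$ we have $|y|=1$, so $|\tilde v|=|(y,V)|\le |V|$. Hence
$$\| \tilde v |V|^2\|_{L^1(\pa\Om)}\le \int_{\pa\Om}|V|^3\,d\si .$$

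\emph{Structure of $V$.} Each $v_i$ is the imaginary part of a holomorphic function, so it is pluriharmonic and in particular harmonic in $\Om=M\times B_1$. Therefore $V$ is the harmonic extension of $V|_{\pa\Om}$. Since $W$ is real on $X$ (condition \eqref{Real}), the reflection principle gives $W(\bar z)=\overline{W(z)}$, so $V$ is odd in $y$. Both the lower bound of Theorem \ref{3.1} and the area estimate of Theorem \ref{3.2} (in the form of the Remark for $G'=M_a\times B_1$) are therefore available:
$$\|\nabla V\|_{L^2(\Om)}\simeq \|V\|_{\dot H^{1/2}(\pa\Om)},\qquad \int_\Om (1-|y|)\,|\nabla V|^2\le C\|V\|^2_{L^2(\pa\Om)} .$$

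\emph{Transfer to the interior.} Take a cutoff $\rho(|y|)$ with $\rho=1$ near $|y|=1$ and $\rho=0$ for $|y|<1/2$. Applying the divergence theorem in $Y$ for fixed $x$ to $\rho\,|V|^2 V\cdot y/|y|$ (or to $\rho\,|V|^3$ in the radial direction) gives
$$\int_{\pa\Om}|V|^3\le C\int_\Om \bigl(|V|^3+|V|^2|\nabla V|\bigr)\,dx\,dy .$$
I would split the main term with the weight $t=1-|y|$:
$$\int_\Om |V|^2|\nabla V|\le \Bigl(\int_\Om t|\nabla V|^2\Bigr)^{1/2}\Bigl(\int_\Om t^{-1}|V|^4\Bigr)^{1/2}\le C\|V\|_{L^2(\pa\Om)}\Bigl(\int_\Om t^{-1}|V|^4\Bigr)^{1/2}.$$
This produces exactly one factor $\|V\|_{L^2(\pa\Om)}$, as in the statement. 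The lemma then reduces to the trace/Hardy-type bound
$$\int_\Om t^{-1}|V|^4\le C\|V\|^4_{H^{1/2}(\pa\Om)} .$$
The lower-order term $\int_\Om|V|^3$ is handled the same way, using the mean-value property: $\int_M|V(\cdot,y)|^2dx$ is subharmonic in $y$. The hypothesis $a\ge1$ keeps all constants uniform, since it prevents the torus from being small compared to the unit ball in $Y$.

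\emph{Main obstacle.} The weighted $L^4$ bound above is the step I expect to be hardest. A scaling count shows that plain Sobolev embedding on the $6$-dimensional domain $\Om$, or on the $5$-dimensional boundary, is insufficient for functions concentrated at small scales. The holomorphic structure therefore has to be used essentially. First I would use the Cauchy--Riemann equations \eqref{CR}, $\pa_{y_j}V=\pa_{x_j}U$ and $\pa_{y_j}U=-\pa_{x_j}V$, to trade $Y$-derivatives for $X$-derivatives. Then I would restrict to complex lines $\{x+\la\eta:\la\in\C\}$. On such a line $w$ is a holomorphic function of one variable, and $|w|^2$ is subharmonic in $\la$. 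This should give $L^\infty$-in-$y$ control of slices by $L^2$ averages over slightly larger discs, replacing the missing Sobolev exponent in the $Y$-directions. Oddness in $y$ ($V=0$ on $X$) should make the $y$-integration near the real subspace harmless.
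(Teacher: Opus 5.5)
\textbf{There is a genuine gap, and it is in your first step.} The bound $|\tilde v|\le |V|$ discards exactly the structure that makes the lemma true. The inequality you reduce to,
\[
\int_{\pa\Om}|V|^3\,d\si\le C\,\|V\|^2_{H^{1/2}(\pa\Om)}\|V\|_{L^2(\pa\Om)},
\]
is false, even for holomorphic, solenoidal fields that are real on $X$.

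Since $\pa\Om$ is strictly pseudoconvex, such fields can concentrate on nonisotropic boxes of size $\epsilon$ along $H$ and $\epsilon^2$ along $Jn$, which have boundary measure $\epsilon^6$. Concretely, put $p_0=(0,e_1)$ and $z_0=ie_1$. Take the Levi polynomial $P(z)=-i(z_1-i)-\frac14\sum_j(z_j-z_{0,j})^2$; it satisfies $\Re P=\frac12(|y|^2-1)-\frac14|z-z_0|^2\le-\frac14|z-z_0|^2$ on $\bar\Om$. Let $\phi=\epsilon^{2N}(\epsilon^2-P)^{-N}$ and $W=\epsilon^2(\pa_{z_2}\phi,-\pa_{z_1}\phi,0)$. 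Symmetrize to $W(z)+\overline{W(\bar z)}$ and periodize in $x$; truncating the Fourier series in $x$ gives entire examples if you want them. Then $V_2\approx N$ on a box of that size at $p_0$, and
\[
\int_{\pa\Om}|V|^3\gtrsim\epsilon^6,\qquad \|V\|^2_{L^2(\pa\Om)}\lesssim\epsilon^6,\qquad \|V\|^2_{H^{1/2}(\pa\Om)}\lesssim\|V\|^2_{H^1(\Om)}\lesssim\int_{\pa\Om}|V|\,|\pa_n V|\,d\si+\epsilon^6\lesssim\epsilon^4 .
\]
So your reduced right-hand side is $O(\epsilon^7)$, while the left-hand side is of order $\epsilon^6$.

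In the same example $\tilde w=y_1W_1+y_2W_2$. Here $W_1=\epsilon^2\pa_{z_2}\phi$ carries the factor $z_2$, and $|y_2|\le|z-z_0|$. Hence $|\tilde v|\lesssim\epsilon$ and $\int|\tilde v||V|^2=O(\epsilon^7)$, consistent with the lemma. Because the example is holomorphic, solenoidal and real on $X$, no further use of the Cauchy--Riemann equations or of subharmonicity on complex lines can rescue your program.

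Separately, your intermediate bound $\int_\Om(1-|y|)^{-1}|V|^4\le C\|V\|^4_{H^{1/2}}$ fails outright. Its left side diverges logarithmically whenever $V$ is continuous on $\bar\Om$ and $V\not\equiv 0$ on $\pa\Om$.

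\textbf{The missing idea is the divergence-free condition.} You never use it, and the Remark after the paper's proof singles it out as essential. The paper proceeds as follows.
\begin{enumerate}
\item In unitary coordinates $z'$ adapted to $p\in\pa\Om$ ($z'_1$ normal, $z'_2,z'_3$ spanning $H_p$), trace invariance gives $\div\, V'=0$. This turns the radial derivative of $\tilde v$ into $H$-derivatives of $v'_2,v'_3$, as in \eqref{10.8}.
\item Combined with $\mathfrak L V=-2\pa_r V$ for pluriharmonic $V$, this gives $|\mathfrak L\tilde v|\lesssim|\nabla_H V|+|V|$. Here $\mathfrak L$ has the principal part of the H\"ormander operator $\sum X_i^2$ along $H$.
\item The Rothschild--Stein estimate (Theorem \ref{RS}) and $\int_{\pa\Om}|\nabla_H V|^2\lesssim\|V\|^2_{H^{1/2}}$ then yield \eqref{L5}, namely $\|\tilde v\|_{H^1(\pa\Om)}\lesssim\|V\|_{H^{1/2}(\pa\Om)}$. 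So $\tilde v$ is half a derivative smoother than $V$, which is exactly the factor $\epsilon$ in the example.
\item After localizing, anisotropic Sobolev embedding gives $V^*\in L^2(T)L^4(X^*)L^4(Y^*)$ from the $H^{1/2}$ bound, and Theorem \ref{S} gives $\tilde v^*\in L^4(Y^*)L^4(X^*)L^\infty(T)$.
\item H\"older's inequality with a final factor $\|V\|_{L^2(\pa\Om)}$ closes the argument.
\end{enumerate}
Any workable proof has to keep $\tilde v$ separate from $V$ and exploit this gain.
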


\smallskip  {\em Proof.} Denote by $H_p, \, p\in \pa \Om$, a complex holomorphic subbundle $T^{1,0} \pa \Om$ of the tangent bundle to $\pa \Om$.  We will also consider
$H$ as a real 4-dimensional section of the tangent bundle $T \pa \Om$.

Let $g$ be a Riemannian metric on $\pa \Om$ induced by the imbedding $\pa \Om \ss \C^3 $. Denote by $\De = div \, \nabla $ the Beltrami-Laplace operator on $(\pa \Om , g)$, where $div$ and $ \nabla $ 
are taken at the Riemannian metric $g$. Let $p\in \pa \Om , \, p=( x,y), \, x\in X , \, y\in S^2 $, where $S^2 \in Y$ is a unit two-dimensional sphere centered at the origin.
Then $T_p \pa \Om = X \oplus \Sigma_y $, where $\Sigma_y $ is a two-dimensional plane tangent to $S^2$ at $y$ and orthogonal to $X$. Therefore

$$ \nabla_{\pa \Om } = \nabla_X + \nabla_S  $$
and hence $\De $ can be written as 
 
 $$ \De = \De_X + \De_{S^2} ,$$
 where $\De_X$ is the Laplace operator on the subspace $X$ and $\De_{S^2}$ is the Beltrami-Laplace operator on $S^2$ with a standard round metric.

 For any smooth function $f$ on $\pa \Om$ we have
 
 \beq \label{sq1}  \De_X f^2 = 2f \De_X f + 2 |\nabla_X f|^2 , \eeq 
 
 \beq \label{sq2}  \De_{S^2} f^2 = 2f \De_{S^2} f + 2 |\nabla_{S^2} f|^2 \eeq

Since the surface $\pa \Om$ is smoothly embedded in $\C^3 \simeq \R^6$ we have a natural injective linear map

$$ i_x: T\pa\Om \to T \C^3 $$

Denote by $\La_p$ the Laplacian on the affine 2-complex-dimensional plane $H^*_p =i_* H_p$:

$$ \La =\La_p = 4 \sum_{i=1}^2 { \pa^2 \over \pa z^p_i \pa \bar z^p_i }  $$
where $z^p_1,\, z^p_2$ is an orthonormal basis on $H^*_p$. Then for any smooth, defined in a neighbourhood 
of $\pa \Om$ function $f$

 \beq \label{La}  \La f(p) = \De f(p) +2{\pa f \over \pa r } (p) - L^2 f(p) \eeq
 where $r=|y|, \, L \in T\pa \Om , \, L \perp H_p , \, | L | =1 $.
 
 Notice that if $f$ is a smooth function on $M$ and $e\in \R^3$ is a fixed vector then
 
$$ \int_M {\pa f \over \pa e} dx =0 $$
and hence  

 \beq \label{fe2}  \int_M {\pa^2 f \over \pa e^2} dx =0  \eeq

 Since for any pluriharmonic function $h$ in $\bar \Om , \; \La h =0$, 
  and
  
  $$ \int_{\pa \Om} h {\pa h \over \pa n} d\si = \int_\Om | \nabla h |^2 dxdy $$
   Then from  \eqref{sq1} - \eqref{fe2} and from self-adjointness  of the Beltrami-Laplace operator $\De $ on $\pa \Om$, it follows   
  
  $$ 0= \int_{\pa \Om}  \De  h^2 d \si    =  \int_{\pa \Om} ( \La h^2 - 2  {\pa h^2 \over \pa n } + L^2 h^2 ) d\si $$
  $$  = 2 \int_{\pa \Om} ( - 2 h {\pa h \over \pa n } + |\nabla_{H}h |^2)d\si 
   +  \int_{S^2} \int_{M} L^2 h^2 dxd\si_0   $$
 $$ = 2\int_{\pa \Om } |\nabla_{H} h |^2d\si -4 \int_\Om  |\nabla h |^2 dxdy  $$
  where $n$ is a normal vector to $\pa \Om $, $ \nabla_H = \nabla_{H_p}, \, p\in \pa \Om$, be the gradient along $H_p$, $\si$ and $\si_0$ are area elements on $\pa \Om$ and $S^2$ correspondingly. Thus by Theorem \ref{3.1}
  
  \beq \label{in.h}  \int_{\pa \Om } |\nabla_{H} h|^2d\si  =  2\int_\Om |\nabla h|^2 dxdy\sim || h ||^2_{ H^{1/2} ( \pa \Om )}  \eeq
  
  
    From inequality \eqref{in.h}  follows
    
  \beq \label{in.V} \int_{\pa \Om } |\nabla_{H} V|^2d\si  \lesssim || V ||^2_{H^{1/2} ( \pa \Om )}  \eeq    
   The same inequality holds for $U$. Thus from the equality
 
  \beq \label{N.8.3} \int_{\Om } |\nabla V|^2d\si  = \int_{\Om } |\nabla U|^2d\si  \eeq
  follows
 
 \beq \label{in.W} \int_{\pa \Om } |\nabla_{H} W|^2d\si  \lesssim || V ||^2_{ H^{1/2} ( \pa \Om )}  \eeq    
 
Let $p=(x,y) \in \pa \Om$ and let $z'_1(p)= y_1,z'_2(p)=x'_2(p)+iy'_2(p), z'_3(p)=x'_3(p)+iy'_3(p)$ be an orthonormal coordinate system in $\C^3$, such that $span \{ z'_2,z'_3\} = H_p$. We write the field $W$ in the coordinates $z'$ as 

$$W=(w'_1(z'){\pa \over \pa z'_1},w'_2(z'){\pa \over \pa z'_2},w'_3(z'){\pa \over \pa z'_3})$$  
Then  for  $p\in \pa \Om$
 
 $$\tilde w(p) = w'_1(p) $$
 
 Let $ \frak J = \{  \pa w_i / \pa z_j  \} $ be the jacobian matrix of field $w$. Then
 
 $$ \tr \frak J = \sum_{i=1}^3 {\partial w_i\over  \pa  z_i }=0    $$
  Let $\mathcal U : (z'_1,z'_2,z'_3) \to (z_1,z_2,z_3) $ be the unitary transformation of $\C^3$. From similarity invariance of the trace, $ \tr \; \mathcal U \, \frak J \, \mathcal U^* =0$
  and therefore 
  
  $$  \sum_{i=1}^3 {\partial w'_i\over  \pa  z'_i }=0    $$

 Denote $w'_i  = u'_i + \sqrt { -1 } v'_i$,
 
$$U'=(u'_1,u'_2,u'_3) , \; \; V'=(v'_1,v'_2,v'_3) $$ 
Then 

 \beq \label{divUV} \div U' =\div V' =0 \;\;\; on \;\;\; Y \eeq    
From \eqref{in.W}  we get

 \beq \label{in.ti} \int_{\pa \Om } |\nabla_{H}\tilde w|^2d\si  \lesssim || V ||^2_{ H^{1/2} ( \pa \Om )}  \eeq    
 


Since  $\La V =0$ we get    from the  Cauchy-Riemann equations
 
$$  \La \tilde v (y)  =   - 2{ \pa \tilde v \over \pa r }(y) = 2{ \pa \tilde u \over \pa x_1 }(y)  , $$



\smallskip  We introduce now two suitable partitions of unity on $\pa \Om $.

1. Let $x_1,\dots ,x_k \in M$ be $k$ vertices of the lattice $\frac a{[a]} \Z^3$ in $M_a, \,
k \leq [a]^3,$ and let
$y_1, \dots ,y_8 \in S^2$ be vertices of a cube inscribed in $S^2$. For each pair $i,j, \; 1\leq i \leq k,\, 1\leq j \leq 8$ denote

$$\tilde G_{i,j} = B_{a/[a]}^{x_i} \times \mathcal B_1^{y_j} ,$$
$$G_{i,j} = B_{3a/2[a]}^{x_i} \times \mathcal B_{3/2}^{y_j} ,$$
$$ \widehat G_{ij} = B_{2a/[a]}^{x_i} \times \mathcal B_2^{y_j} ,$$
where $B_r^x \ss M_s$ be a ball of radius $r$ centered at $x$ and $\mathcal B_r^y \ss S^2$ be a geodesic  disk on $S^2$ of radius $r$ centered at $y$.

We have

$$\pa \Om \ss \cup_{i,j}\tilde G_{ij} $$
and the {\it multiplicity of covering} of $\pa \Om$ by $\{ \widehat G_{ij} \}$ is uniformly bounded: for any pair $\{ i,j\}$

$$ \sharp \{ \{ k,n\} : \widehat G_{ij} \cap \widehat G_{kn} \neq \emptyset \} \leq 208 $$

Define $\phi_{ij}^0 \in C_0^\infty( G_{ij} )$, such that $0\leq \phi_{ij}^0 \leq 1$,
$$\phi_{ij}^0 =1 \;\;\; on \;\;\; \tilde G_{ij} $$
and define a partition of unity $\phi_{ij}$ on $\pa \Om$ by

$$\phi_{ij} = \phi_{ij}^0 / \sum_{k,n} \phi^0_{kn} $$

Since $a\geq 1$ we can choose $\phi_{ij} $ with uniformly bounded derivatives. There is a constant $K$ independent of $a$, such that

$$ || \phi_{ij} ||_{C^2 ( G_{ij)}} \leq K $$ 

For any $a\geq 1$ the rank of the partition $\phi_{i,j}$ is uniformly bounded  $\leq 208$.

The domains $ \widehat G_{ij} $ are mutually  congruent.

\smallskip We transfer the  partition $\phi_{ij} $ from $\pa \Om $ to $\pa \Om_r, \, r<1$, setting  $\phi^r_{ij} (x,y) = \phi_{ij} ( x, y/r) $. Correspondingly,
 domains $ G_{ij}, \,  \widehat G_{ij} $  moved from $\pa \Om $ to $\pa \Om_r, \, r<1$,  we denote  $ G^r_{ij}, \,  \widehat G^r_{ij} $ 

\smallskip 2. Let $x_1,\dots ,x_{8} \in M$ be $8$ vertices of the lattice $\frac a2 \Z^3$ in $M$, and  $y_1, \dots , y_8 \in S^2$ be  vertices defined in the previous section. Denote

$$G'_{i,j} = B_{a/2}^{x_i} \times \mathcal B_1^{y_j} ,$$
$$ \widehat G'_{ij} = B_{a}^{x_i} \times \mathcal B_2^{y_j} $$

We have

$$\pa \Om \ss \cup_{i,j} G'_{ij} $$

Define $\psi_{ij}^0 \in C_0^\infty(\widehat G'_{ij} )$, such that $0\leq \psi_{ij}^0 \leq 1$,
$$\psi_{ij}^0 =1 \;\;\; on \;\;\; G'_{ij} $$
and define a partition of unity $\psi_{ij}$ on $\pa \Om$ by

$$\psi_{ij} = \psi_{ij}^0 / \sum_{k,l} \psi^0_{kl} $$



\smallskip Let $B_1 \ss \R^2 = Y^* = \{  y^*_1,y^*_2\},\, B_s\ss \R^3 = X = \{ x_1,x_2,x_3\}$ be the disk and the ball of radii $1$ and $s$ correspondingly. Let 

$$\xi : \mathcal B_2^{y_j} \longrightarrow B_1 $$
be a diffeomorphism and

$$ \iota : B_a^{x_i} \longrightarrow B_a ,$$
$$\iota (x') =x'-x_i $$
be an isometry

Define  diffeomorphisms

$$\Phi_{ij} : \widehat G'_{ij} \longrightarrow B_1\times B_a ,$$
$$ \Phi_{ij} (x,y) = ( \iota (x), \xi (y)) $$
 
\smallskip Below we reduce  the estimates of vector fields $W,\,  \tilde w$ on $\pa \Om$ to the sum of corresponding estimates the products $\phi_{ij}W, \, \phi_{ij} \tilde w $ we on the domains $G_{ij}$.  We fix now one of these domains $G_{ij}$ and denote it by $G$, domain $\widehat G_{ij}$ denote by $\widehat G$, the ball $\mathcal B_2^{y_j}$ denote $\mathcal B$,
function $\phi_{ij}$. Denote  functions $\psi_{ij}, \Psi_{ij}, \Phi_{ij}$ from the partition 2  by $\psi, \Psi, \Phi$ correspondingly

\smallskip  Let $X_1(y), \, X_2(y), \, y \in \mathcal B$ be two one-real-dimensional smooth sections of the tangent bundle $T \mathcal B \ss T S^2$, such that $X_1(y), \, X_2(y)$ form
an orthonormal frame on $T_y S^2$. Denote $X_3= JX_1,\, X_4=JX_2$, where $J$ is a complex structure on $\C^3$.  Then for any $p\in G,\; X_1,\dots , X_4$ is an orthonormal frame on $H_p$.

Define on $G$ a second order elliptic operator of the Hörmander type:
  
 $$ \mathcal H = \sum_{i=1}^4 X_i^2 $$ 
 
 Since the Levi form  $ \mathcal L $ of $\pa \Om$, 
 
$$  i\pa \bar \pa |y|^2 $$
  is non-degenerate, the Frobenius form $ \mathcal F (X,X') = - \mathcal L (X, JX') = [X,X'] \, \mod  \, H $ is also non-degenerate. Hence, 
 the vectors $X_i$ satisfy the Hörmander's condition of hypoellipticity  of $\mathcal H $: the $span $ of $X_i$ and their commutators up to the finite order span $T_pG $ at each point $p\in G$. There is a simple way to show non-vanishing of the above Levi form. Assume, in general case, that
  $D\ss \C^3$ is a smooth domain and $s$ is a smooth in $\C^3$ function such that $D$ is a nodal domain of $s$ and $\nabla s \neq 0$ on $\pa D$. It is well
  known that $\pa D$ is Levi flat if and only if the determinant
  
  $$  \det \begin{pmatrix} 0 & s_{z_1} & s_{z_2} & s_{z_3} \\ s_{\bar z_1} & s_{\bar z_1 z_1} & s_{\bar z_1 z_2} & s_{\bar z_1 z_3}  \\
  s_{\bar z_2} & s_{\bar z_2 z_1} & s_{\bar z_2 z_2} & s_{\bar z_2 z_3}  \\ s_{\bar z_3} & s_{\bar z_3 z_1} & s_{\bar z_3 z_2} & s_{\bar z_3 z_3}    \end{pmatrix}  $$
  is equal to zero, [H2], [ML].  In our case: $ D=\Om, \, s= 1 - |y|^2$ and the last determinant is equal to $1$ (this computation is simple in the basis
  $z'_1, z'_2, z'_3$).  Other way,  one can get non-degeneracy of  $\mathcal F$   as a consequence of an old 
  result of Sommer, [So] (see also [F]). 
  
 Alternatively it's  possible to check directly that $X_1, \dots , X_4, \, [X_1,X_3] $ span $T_pG$.
  
  Define 
  
  $$ \mathfrak L = \De -L^2 $$

The principal part of $\mathfrak L$ coincide with the principal part of $ \mathcal H$. 

From equality \eqref{La} since $V$ is a pluriharmonic function we get

 \beq \label{10.5} \mathfrak L V= -2 { \pa V \over \pa r } \eeq
Thus from the equality

$$ \mathfrak L  \Im z = 2$$
follows
  
 \beq \label{Ltilde} \mathfrak L \tilde v (y) = - 4 { \pa \tilde v \over \pa r }(y) + 2  \tilde v (y)  \eeq
For $p=(x,y) \in \pa \Om $

$$ { \pa \tilde v \over \pa r }(p) = { \pa v'_1 \over \pa y_1 }(p) $$
Hence, from \eqref{divUV}  

 \beq \label{10.8} { \pa \tilde v \over \pa r }(p) = - { \pa v'_2 \over \pa y_2 }(p) - { \pa v'_3 \over \pa y_3 }(p) \eeq
Therefore
  
 \beq \label{frak} |\mathfrak L \tilde v | \leq 4 ( | \nabla_H V | + |V| )\eeq
 
  
 
 From inequality \eqref{frak} follows

$$ || \phi \mathfrak L  \tilde v||_{L^2(G) }\leq C  ( || \nabla_H V ||_{L^2( G)} + || V ||_{L^2( G)} ) $$
Since $\phi \leq 1$, by Theorem \ref{RS} 

$$ || \phi \tilde v||_{S_2^2(G) }\leq C  ( || \nabla_H  V ||_{L^2( \widehat G)} + || V||_{L^2( \widehat G)} ) $$
Since

$$  || \phi \tilde v||_{S_2^2(G) } \geq  || \phi  \nabla^2_H \tilde v||_{L^2(G) } + || \phi  \nabla \tilde v||_{L^2(G) }    -2 || \nabla \phi  \nabla_H \tilde v||_{L^2(G) }    $$
$$ -  || (\nabla^2_H \phi ) \tilde v||_{L^2(G) } - || (\nabla_H \phi ) \tilde v||_{L^2(G) }    $$
$$ \geq  || \phi  \nabla^2_H \tilde v||_{L^2(G) } + || \phi  \nabla \tilde v||_{L^2(G) }    -C  ( || \nabla_H  V ||_{L^2( \widehat G)} + || V||_{L^2( \widehat G)} ) $$
we get

$$ || \phi (| \nabla^2_H \tilde v|  + | \nabla \tilde v| )||_{L^2(G) }
  \leq C   ( || \nabla_H V ||_{L^2( \widehat G)} + || V ||_{L^2( \widehat  G)} ) $$
and hence
$$ || \phi (| \nabla^2_H \tilde v|  + | \nabla \tilde v| )||^2_{L^2(G) } \leq  C   ( || \nabla_H V ||^2_{L^2( \widehat G)} + || V ||^2_{L^2( \widehat  G)} ) $$

Since the multiplicity of the covering of $\pa \Om$ by the domains $ \widehat  G_{ij} $ is uniformly bounded by $208$, taking the sum of  the last inequalities over all domains $G_{ij} $ from Proposition \ref{PU} 

$$  ||  \tilde v||_{S_2^2(\pa \Om )}\leq  C   ( || \nabla_H V ||^2_{L^2( \pa \Om)} + || V ||^2_{L^2( \pa \Om)} ) $$
and from  \eqref{in.W}  we get

 \beq \label{i.tiV}  ||  \tilde v||_{S_2^2(\pa \Om )}\lesssim    ||  V ||_{H^{1/2}( \pa \Om)}  \eeq

  From the inequality  \eqref{in.h} we have
  
    \beq \label{10.12} ||  \nabla_H V ||_{ L^2 (\pa \Om  ) } \lesssim ||  V ||_{ H^{1/2} \pa \Om )} \eeq

From inequality \eqref{i.tiV}

 \beq \label{L5}  ||  \tilde v||_{H^1(\pa \Om )}\lesssim    ||  V ||_{H^{1/2}( \pa \Om)}  \eeq

 Since $\tilde v$ is a harmonic function in $\Om $  by Theorem \ref{3.2} we have
 
 \beq \label{h3} \int_0^1(1-r) ||\nabla V||^2_{L^2 ( \pa \Om_r ) } dr \lesssim   || V||^2_{L^2( \pa \Om )}  \eeq
 since 
 
  $$  \int_0^1(1-r) ||\nabla V ||^2_{L^2 ( \pa \Om_r ) } dr = \int_0^1 \int_{\Om_r} |\nabla V |^2 dxd.y  $$
we get

  \beq \label{NV1}  \int_0^1|| V||^2_{H^{1/2}  ( \pa \Om_r ) } dr \lesssim   || V||^2_{L^2( \pa \Om )} \eeq
  From the last inequality and inequality \eqref{L5} 
  
  $$  \int_0^1 ||\nabla \tilde v||^2_{L^2 ( \pa \Om_r ) } dr \lesssim   || V||^2_{L^2( \pa \Om )} $$
  Hence

 \beq \label{L4}  ||  \tilde v||_{H^{1/2}(\pa \Om )}\lesssim    ||  V ||_{L^2( \pa \Om)}  \eeq

\smallskip  Denote on $B_1\times B_a \ss \R^5 = X \times Y^*, \,Y^* = \{ y_1^*,y_2^* \}$
  functions
  
$$ V^* = \psi V ( \Phi^{-1}) , \;\;\;  \tilde v^* = \psi \tilde v ( \Phi^{-1}) $$

  Let $y'\in Y^* $. Denote by $ X^*( y') $ two-dimensional subspace $x^*_1, x^*_2$ in $X$ orthogonal to $J\Phi^{-1} ( y')$ and by $T ( y') \ss X $ an axis (one-dimensional subspace) parallel to the vector  $J\Phi^{-1} ( y')$ .
Thus $X= X^*( y') \times  T( y')$ and $\R^5$ is given as a product of orthogonal subspaces $Y^*, X^*, T$. Then 


 \beq \label{nnnn} || V^* ||_{ L^2(T) L^2 (X^*) L^2(Y^*)}  \lesssim || V||_{L^2( \pa \Om) }  ,\eeq

Inequality \eqref{10.12}  implies

$$ || V^* ||_{ L^2(T) L^2 (X^*) H^1(Y^*)}  \lesssim    || V||_{H^{1/2}( \pa \Om) }  ,$$

$$  || V^* ||_{ L^2(T) H^1 (X^*) L^2(Y^*)}  \lesssim    || V||_{H^{1/2}( \pa \Om) }  $$
 From the two last inequalities

$$ || V^* ||_{ L^2(T) H^{1/2} (X^*) H^{1/2}(Y^*)}  \lesssim    || V||_{H^{1/2}( \pa \Om) } $$
 Sobolev embedding theorems   imply

 \beq \label{SN10} || V^* ||_{ L^2(T) L^4 (X^*) L^4(Y^*)}  \lesssim    || V||_{H^{1/2}( \pa \Om) }  \eeq

\medskip Define the heat operators,

$$ P_1  = { \pa \over \pa t} - { \pa^2 \over \pa (x^*_1)^2 } - { \pa^2 \over \pa (x^*_2)^2 } ,$$

$$ P_2  = { \pa \over \pa t} - { \pa^2 \over \pa (y^*_1)^2 } - { \pa^2 \over \pa (y^*_2)^2 } $$

From \eqref{i.tiV} follows 

     $$ || P_1 \tilde v^* ||_{L^2(Y^*) L^{2}(X) }  \lesssim  || V||_{H^{1/2}( \pa \Om) } ,$$
     
          $$ || P_2 \tilde v^* ||_{L^2(Y^*) L^{2}(X) }  \lesssim  || V||_{H^{1/2}( \pa \Om) } $$

     Thus by Theorem \ref{S} 
     
     $$ || \tilde v^* ||_{L^2(Y^*) H^1(X^*) L^\infty (T) }  \lesssim  || V||_{H^{1/2}( \pa \Om) } ,$$
     
          $$ || \tilde v^* ||_{L^2 (X^*) H^1 (Y^*)  L^\infty (T) }  \lesssim  || V||_{H^{1/2}( \pa \Om) } $$
From the two last inequalities     
     and by Sobolev embedding theorems 
  
     $$ || \tilde v^* ||_{L^4(Y^*) L^{4}(X^*) L^\infty (T) }  \lesssim    || V||_{H^{1/2}( \pa \Om) } $$
     Thus from the last inequality and \eqref{SN10}

$$ || \tilde v^* |V^*| ||_{ L^{2 }(X \times Y^*)}  \lesssim  || V||^2_{H^{1/2}( \pa \Om) } $$
a from the inequality \eqref{nnnn}

$$ || \tilde v^* |V^*|^2 ||_{ L^{1 }(X \times Y^*)}  \lesssim  || V||^2_{H^{1/2}( \pa \Om) } || V||_{L^2( \pa \Om) } $$

Since $\sharp \{ \psi_{ij} \} = 64$ it follows

 \beq\label{SN11} || \tilde v | V |^2 ||_{L^1 ( \pa \Om ) } \leq C || V||^2_{H^{1/2}( \pa \Om) } || V||_{L^2( \pa \Om) } \eeq

The lemma is proved.

\smallskip {\bf Remark.} For the proof of Theorem \ref{NN} we are requiring  an inequality of the following type

 \beq\label{SN12} || \tilde v | V |^2 ||_{L^1 ( \pa \Om ) } \lesssim  || V ||^{\al }_{H^{1/2} ( \pa \Om )} || V ||^{3 - \al } _{L^2( \pa \Om )} \eeq
with a positive constant $\al \leq 2$. 
The divergence-free assumption for vector field $W$ is essential for  inequalities of such type.
The better outcome for the solenoidal fields provided by the regularity properties of  term $\tilde v$. For solenoidal fields the a priori 
estimate of $\tilde v$ in $H^1(\pa \Om) $ is a consequence of \eqref{i.tiV}.  Such estimate does not hold in general for pluriharmonic  in $\Om$ functions. Boundary estimates for 
holomorphic functions in $\Om$ are closely related to the regularity property of the Hörmander operator $\frak L$. The
Gaussian bounds of the corresponding heat equation can be given in terms of Carnot-Carathéodory metric, generated by 
holomorphic subbundle of the tangent bundle  of $ \pa \Om$,  [JS], [BB], [KS], [FS].  In terms of the geodesic distance, the same
 estimates for the Schrödinger equation on Riemannian manifolds,  was obtained in [LY]. The  Carnot-Carathéodory metric is an anisotropic non-smooth metric,   [NSW], and as a consequence, there are no estimates of  type \eqref{i.tiV}  for pluriharmonic functions. On the other hand, the complete differential of function  $\tilde v$ on $\pa \Om$ can be written in terms  of Wirtinger derivatives of $W$  and that's compensate anisotropy of the metric.
 
 We didn't employ  inequality \eqref{L4} but we keep it because it gives a way for alternative proof of  inequality \eqref{SN11}.

\medskip 

 Since $V=0$ on $X$, $V$ is an odd function on $Y$. Hence for any $x\in X$

$$ \int_{B_1} V(x,y) dy =0 $$
Thus by Poincaré inequality

  \beq \label{Poin}   || V ||^2_{L^2 ( \Om ) } \leq C \int_\Om | \nabla V |^2 dxdy \eeq
Hence  by
Theorem  \ref{3.1} we can rewrite  Lemma   \ref{BasicHol} in the following form

\begin{lemma}\label{BasicHolP} Let $a \geq 1$. 

$$ || \tilde v |V|^2||_{L^1(\pa \Omega )}  \leq C || V||_{L^2( \pa \Om) }   \int_\Om | \nabla V |^2 dxdy  $$
\end{lemma}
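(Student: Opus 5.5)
Lemma \ref{BasicHolP} follows from Lemma \ref{BasicHol} once we show
$$ || V ||^2_{H^{1/2}(\pa \Om)} \leq C \int_\Om |\nabla V|^2 dxdy , $$
with $C$ independent of $a\geq 1$. Substituting this into the right-hand side of Lemma \ref{BasicHol}, and leaving the factor $|| V||_{L^2(\pa\Om)}$ untouched, gives the claim. So the whole task is the displayed trace-type bound for the pluriharmonic field $V$.

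Each component of $V=\Im W$ is pluriharmonic, hence harmonic in $\Om\ss\R^6$, so $V$ coincides with the harmonic extension of its boundary values. We also have $V=0$ on $X$. The holomorphic extension of a field real on $X$ satisfies $w(\bar z)=\overline{w(z)}$, so $V(x,-y)=-V(x,y)$ and $V$ is odd in $y$; the same holds for its trace on $\pa\Om$. This is exactly the hypothesis of the third inequality of Theorem \ref{3.1}, which gives
$$ || V||_{\dot H^{1/2}(\pa\Om)} \leq C|| V||_{\dot H^1(\Om)} . $$
The constant there comes from Bessel-function bounds at $|y|=1$ and does not depend on the period $a$.

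It remains to control the $L^2(\pa\Om)$ part of the full $H^{1/2}$ norm by the Dirichlet integral. I would use oddness again. Expand the trace in the basis $\psi_{ij}=\xi_i(x)\phi_j(y)$ from the proof of Theorem \ref{3.1}. Odd functions of $y$ have no $\mu=0$ component, since spherical harmonics of degree $0$ are even. Hence every mode in the expansion has $\la_{ij}=\nu_i+\mu_j\geq \mu_{\min}>0$ with $\mu_{\min}$ independent of $a$. Therefore $|| V||^2_{L^2(\pa\Om)}\leq C| V|^2_{\dot H^{1/2}(\pa\Om)}$, which yields the displayed bound.

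Alternatively, one can argue in the interior: combine the Poincaré inequality \eqref{Poin} with the trace inequality $|| V||_{L^2(\pa\Om)}^2\lesssim || V||^2_{L^2(\Om)}+\int_\Om|\nabla V|^2$ on the product $M\times B_1$, obtained by integrating in the radial variable of $B_1$ slice by slice in $x$, so the constant is independent of $a$. This also gives $|| V||_{L^2(\pa\Om)}^2\lesssim \int_\Om |\nabla V|^2$.

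The main obstacle is ensuring that all constants are uniform in $a\geq 1$, since Lemma \ref{BasicHol} is stated for every such $a$. In particular, the lower bound on the nonzero eigenvalues must come from the sphere factor and not from the torus factor $\nu_1\sim a^{-2}$, which degenerates as $a$ grows. This is why I would insist on the oddness argument (no $\mu=0$ modes) rather than a generic Poincaré inequality on $\pa\Om$.
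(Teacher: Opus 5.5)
Your proposal is correct and follows the paper's route: you use the oddness of $V$ in $y$ and the third inequality of Theorem \ref{3.1} to get $\|V\|^2_{H^{1/2}(\pa\Om)}\lesssim\int_\Om|\nabla V|^2\,dxdy$, and then substitute this into Lemma \ref{BasicHol}. The only variation is how you absorb the $L^2(\pa\Om)$ part of the norm: you use the missing degree-zero spherical harmonic in the boundary expansion, whereas the paper uses the slice-wise Poincar\'e inequality \eqref{Poin} (your stated alternative), and both give constants independent of $a$.
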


Making homotetie  of $\Om_r $ to $\Om^{a/r}_1 , \, z \to z/r$ and taking into account the scaling factors of the Sobolev norms and scaling of $\tilde v$ we transfer Lemma  \ref{BasicHolP} 
to the domain $\Om_r$.

\begin{lemma}\label{BasicHol2} Let $a \geq1, \, r\leq 1$.  

$$ || \tilde v | V |^2 ||_{L^1 ( \pa \Om_r ) } \leq {C \over r^{1/2} } || V||_{L^2( \pa \Om_r) }   \int_{\Om_r} | \nabla V |^2 dxdy  $$
\end{lemma}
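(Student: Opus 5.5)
The plan is to deduce Lemma \ref{BasicHol2} from Lemma \ref{BasicHolP} by the homothety $z\to z/r$, keeping careful track of how each quantity scales. The one structural point to verify is that the constant in Lemma \ref{BasicHolP} does not depend on the period $a$ once $a\geq 1$.

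First I set up the rescaled field. For $z'\in\Om^{a/r}_1$ put $W'(z')=W(rz')$. Then $W'$ is holomorphic, and it is solenoidal because $\sum_i \pa w'_i/\pa z'_i = r\sum_i \pa w_i/\pa z_i=0$. It is real on $X$ and periodic with lattice $(a/r)\Z^3$. Since $r\leq 1$, the new period satisfies $a/r\geq a\geq 1$, so Lemma \ref{BasicHolP} applies to $W'$ on $\Om^{a/r}_1$. Writing $V'=\Im W'$, we have $V'(z')=V(rz')$. The auxiliary function transforms differently: since $\Im z'=\Im z/r$,
$$\tilde v'(z')=(\Im z',V'(z'))=\frac1r\,\tilde v(rz').$$

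Next I compare the three quantities in the inequality, using that $\pa\Om_r$ has real dimension $5$ and $\Om_r$ has real dimension $6$.
\begin{itemize}
\item Left-hand side: $\int_{\pa\Om_r}|\tilde v||V|^2d\si = r^5\int_{\pa\Om^{a/r}_1}|\tilde v(rz')||V'|^2d\si' = r^6\,\|\tilde v'|V'|^2\|_{L^1(\pa\Om^{a/r}_1)}$.
\item Boundary norm: $\|V\|_{L^2(\pa\Om_r)}=r^{5/2}\|V'\|_{L^2(\pa\Om^{a/r}_1)}$.
\item Dirichlet integral: $\int_{\Om_r}|\nabla V|^2dxdy = r^6\cdot r^{-2}\int_{\Om^{a/r}_1}|\nabla V'|^2 = r^4\int_{\Om^{a/r}_1}|\nabla V'|^2$.
\end{itemize}
Applying Lemma \ref{BasicHolP} to $W'$ gives
$$\|\tilde v|V|^2\|_{L^1(\pa\Om_r)}\leq C\,r^6\|V'\|_{L^2}\int|\nabla V'|^2 = C\,r^{6-5/2-4}\,\|V\|_{L^2(\pa\Om_r)}\int_{\Om_r}|\nabla V|^2,$$
which is exactly the factor $r^{-1/2}$ in the statement.

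The main point needing care is that $C$ in Lemma \ref{BasicHolP} is uniform in $a\geq1$, since the rescaled period $a/r$ becomes arbitrarily large as $r\to 0$. I would check this through the proof of Lemma \ref{BasicHol}:
\begin{itemize}
\item the partitions of unity $\phi_{ij}$ have rank at most $208$ and $C^2$ bounds $K$ independent of $a$;
\item the local pieces $\widehat G_{ij}$ are mutually congruent, so the Rothschild--Stein estimate (Theorem \ref{RS}) holds with a single constant;
\item Proposition \ref{PU} depends only on the rank;
\item the Poincaré inequality \eqref{Poin} uses only the oddness of $V$ in $y$ on $B_1$, so its constant does not see $a$;
\item the equivalence $\int_\Om|\nabla V|^2\sim\|V\|^2_{H^{1/2}(\pa\Om)}$ from Theorem \ref{3.1} is computed mode by mode in the product eigenbasis, with Bessel-function bounds independent of the torus size.
\end{itemize}
With this uniformity confirmed, the scaling computation above completes the proof.
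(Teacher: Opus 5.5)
Your proof is correct and is the same argument the paper uses: the homothety $z\to z/r$ carries $\Om_r$ to $\Om^{a/r}_1$ with $a/r\geq 1$. Applying Lemma \ref{BasicHolP} there and tracking the factors $r^6$, $r^{5/2}$ and $r^4$ (including the extra $1/r$ in $\tilde v$) gives exactly the $r^{-1/2}$. Your check that the constant must be uniform in the period $a\geq 1$ is left implicit in the paper, which only records that the partition constants do not depend on $a$, so your write-up is more careful than the original on that point.
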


\section{Proof of Theorem \ref{NN} }

We start with some auxiliary results. Denote

$$ A = \{ (x,y)  \in \R^2, \; 0<x< \pi  /b, \, 0<y<r \},  $$

$$ a = \{ (x,y) \in \pa A, \; y= r \} $$

Let $u$ be a harmonic function in $A$, smooth in $\bar A$ and $u=0$ on $\pa A \setminus a $.

\begin{lemma}\label{SN-1} Let $r , b \leq 1$. Then
 \beq\label{SN2.9} \int_A | \nabla u |^2 dxdy \leq  \int_A | \pa u / \pa x|^2 dxdy + { 1 \over r } \int_a u^2 dxdy \eeq
\end{lemma}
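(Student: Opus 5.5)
The plan is to separate variables in $A$. Because $u$ vanishes on $x=0$, $x=\pi/b$ and $y=0$, and is harmonic, I will expand
$$u(x,y)=\sum_{k\ge1} c_k \sin(\lambda_k x)\,\frac{\sinh(\lambda_k y)}{\sinh(\lambda_k r)},\qquad \lambda_k=kb,$$
where $c_k$ are the sine coefficients of $u|_a$. The functions $\sin(\lambda_k x)$ and $\cos(\lambda_k x)$ are each orthogonal on $(0,\pi/b)$. Hence $\int_A|\partial u/\partial x|^2$, $\int_A|\partial u/\partial y|^2$ and $\int_a u^2$ all split into sums over $k$ with common weight $\frac{\pi}{2b}|c_k|^2$. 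This reduces (SN2.9) to a one-variable inequality for each mode with frequency $\lambda=\lambda_k$.

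For a single mode the three quantities are explicit. Dividing by the common weight:
\begin{itemize}
\item the $x$-derivative term is $\lambda^2\int_0^r \sinh^2(\lambda y)\,dy/\sinh^2(\lambda r)$;
\item the $y$-derivative term is $\lambda^2\int_0^r \cosh^2(\lambda y)\,dy/\sinh^2(\lambda r)$;
\item the boundary term is $1$.
\end{itemize}
Using $\cosh^2-\sinh^2=1$, the difference of the two derivative terms is $\lambda^2 r/\sinh^2(\lambda r)$. Since $\sinh t\ge t$, this is at most $\lambda^2 r/(\lambda r)^2=1/r$. Summing over $k$ gives the mode-wise estimate
$$\int_A\Big|\frac{\partial u}{\partial y}\Big|^2 dxdy\le \int_A\Big|\frac{\partial u}{\partial x}\Big|^2 dxdy+\frac1r\int_a u^2\,dx .$$
As a check, the same bound follows without Fourier series from the Rellich-type identity obtained by multiplying $\Delta u=0$ by $y\,\partial u/\partial y$ and integrating by parts. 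This gives $\frac12\int_A(u_y^2-u_x^2)=\frac r2\int_a(u_y^2-u_x^2)$, to be combined with $\int_A|\nabla u|^2=\int_a u\,u_y$.

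The main obstacle is passing from this estimate to (SN2.9) exactly as worded. (SN2.9) is equivalent to $\int_A|\partial u/\partial y|^2\le r^{-1}\int_a u^2$, with no $\int_A|\partial u/\partial x|^2$ term on the right. By the same mode computation, that requires $\lambda^2\big(r/2+\sinh(2\lambda r)/(4\lambda)\big)/\sinh^2(\lambda r)\le 1/r$. For $\lambda r$ large the left side behaves like $\lambda/2$, so this fails for high frequencies, and $r,b\le1$ cannot prevent it because $k$ is unbounded. I would therefore first try to recover (SN2.9) by restricting to modes with $\lambda_k r\lesssim1$. If that cannot be justified from the intended application, I would prove the displayed bound. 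It is equivalent to
$$\int_A|\nabla u|^2\le 2\int_A|\partial u/\partial x|^2+r^{-1}\int_a u^2,$$
which appears to be the form actually needed downstream.
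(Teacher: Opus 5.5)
You follow the paper's own route: expand in $\phi_n=\sin(nbx)\sinh(nby)$, use orthogonality of $\sin(nbx)$ and $\cos(nbx)$ on $(0,\pi/b)$, and check one mode. Your computation shows that the paper's key step is false. That step is the claim that each $\phi_n$ satisfies \eqref{SN2.9} ``by elementary computations''. So the lemma as worded cannot be proved, and you are right not to claim it. Your single high-frequency mode $\sin(\lambda x)\sinh(\lambda y)$ is already a counterexample.

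What you do prove, $\int_A u_y^2\le\int_A u_x^2+r^{-1}\int_a u^2$ via $\lambda^2 r/\sinh^2(\lambda r)\le 1/r$, is correct and is the natural replacement. Your Rellich ``check'' is not closed as written, but it can be closed without Fourier series. The quantity $\int_0^{\pi/b}(u_y^2-u_x^2)\,dx$ is independent of $y$, since $u_y=0$ on the sides; hence it equals $P_0:=\int_0^{\pi/b}u_y(x,0)^2\,dx$. Meanwhile $w(y)=\int_0^{\pi/b}u^2\,dx$ has $w(0)=w'(0)=0$ and $w''=2\int_0^{\pi/b}|\nabla u|^2dx\ge 2P_0$. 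Therefore $\int_A(u_y^2-u_x^2)=rP_0\le w(r)/r$.

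However, you understate the failure, and your first fallback is a dead end. That fallback was to keep only modes with $\lambda_k r\lesssim1$. With $s=2\lambda r$, the one-mode form of \eqref{SN2.9} reads $\frac{s^2}{4}+\frac s4\sinh s\le\cosh s-1$, whereas
\[
\cosh s-1-\frac{s^2}{4}-\frac{s}{4}\sinh s=-\sum_{k\ge3}\Bigl(\frac k2-1\Bigr)\frac{s^{2k}}{(2k)!}<0\qquad(s>0).
\]
So every mode violates it, at every frequency. More directly, \eqref{SN2.9} is equivalent to $\int_A u_y^2\le r^{-1}\int_a u^2$. Since $u(x,0)=0$, Cauchy--Schwarz gives $u(x,r)^2\le r\int_0^r u_y(x,\tau)^2\,d\tau$, hence $r^{-1}\int_a u^2\,dx\le\int_A u_y^2$, which is the reverse inequality. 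Equality forces $u=c(x)y$, and harmonicity with $c(0)=c(\pi/b)=0$ then gives $u\equiv0$. Thus \eqref{SN2.9} holds for no nonzero admissible $u$, and the hypothesis $r,b\le1$ is irrelevant.

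State the lemma in your corrected form instead. The almost-periodic version stated right after it repeats the same false inequality, and Lemma~\ref{VX} is built on it. The coefficient of the $\nabla_X$-term there must therefore be redone with your extra factor $2$.
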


{\it Proof.} Denote

$$ \phi_n  = \sin nx b \sinh ny b $$
By elementary computations harmonic function $\phi_n $ satisfies the inequality 

$$ \int_A | \nabla \phi_n |^2 dxdy \leq  \int_A | \pa \phi_n / \pa x|^2 dxdy + { 1 \over r } \int_a \phi_n^2 dxdy $$

 We expand $u$ into series

 \beq\label{SN2.10}  u = \sum_{n=1}^\infty a_n \phi_n \eeq
Since $\{ \phi_n \}$ are orthogonal functions in $ L^2(A), \, H^1(a) $ and in $ L^2(A) $  for the derivatives $ \pa \phi_n / \pa x $, inequality \eqref{SN2.9} follows for  function $u$.

 \medskip  

Denote 

$$ g(y) = \int_0^{\pi / b} u^2(x,y) dx , $$
from the expansion \eqref{SN2.10}  it follows that for $0\leq y \leq r , \, n \in \N $

  \beq\label{SN2.2}  g^{ (n) } (y) \geq 0 \eeq

 \medskip  

Let $f(z)$ be a holomorphic function in the strip $\Sigma = \{ z\in \C, \, \Im z<1\} $. Assumed that $f$ is uniformly bounded in $\Sigma $ and is
an almost periodic function on the real line $l = \{ (\xi , \eta): \, \eta=0\} $.  Then $f$ is an almost periodic function on any line $l_a = \{ (\xi ,\eta), \, \eta=a \}, \, -1<a<1 $, 
see [C]. Therefore, if $h$ is a harmonic, uniformly bounded function in $\Sigma $, $h=0$ on $l$ and $\pa h / \pa \eta $ is an almost periodic
function on $l$, then $\nabla h$ is an almost periodic function on any line $l_a, \, |a| < 1$.

Denote

$$ d^1(\xi ) = \int^r_0 | \nabla h( \xi , \eta)|^2d\eta ,$$

$$ d^2(\xi ) = \int^r_0 |  h_\xi ( \xi , \eta)|^2d\eta ,$$

$$ d^3(\xi ) =   h^2( \xi , \eta) $$
$\xi \in \R , \, r<1$.  Since 

$$ \left| \int^r_0 h(T,\eta) h_\xi( T,\xi) d\eta - \int^r_0 h(-T,\eta) h_\xi( -T,\xi) d\eta  \right| < C $$ 
where constant $C= C(r)>0$ is independent from $T$, it follows that

 \beq\label{SN2.4}   \mathcal M (d^1) = \mathcal M (h(\cdot , r) h_\eta (\cdot, r)) \eeq
where $\mathcal M (f) $ is the mean of an almost periodic function $f$,

$$  \mathcal M (f) = \lim_{T \to \infty } { 1\over 2T} \int^T_{-T} f d\xi $$

As a consequence of Lemma \ref{SN-1} we have

\begin{lemma}\label{SN-2} Let $\eta \leq 1$. Then
$$  \mathcal M (d^1) \leq   \mathcal M (d^2) + \frac 1\eta  \mathcal M (d^3) $$
\end{lemma}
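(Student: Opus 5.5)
The plan is to deduce Lemma \ref{SN-2} from Lemma \ref{SN-1}, applied on long rectangles, and then pass to means. Here $\eta$ plays the role of the height $r$ of the strip, and $d^3$ is understood as $h^2(\xi,\eta)$ evaluated on the top line. Fix $T$ large and choose $b\le 1$ with $\pi/b$ comparable to $2T$. For the rectangle $A_T=(-T,-T+\pi/b)\times(0,\eta)$, the function $h$ is harmonic, vanishes on the bottom side $\eta=0$, and is smooth up to the boundary. It need not vanish on the vertical sides, so Lemma \ref{SN-1} cannot be applied directly.

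\textbf{Step 1: reduce to a function vanishing on the sides.} I would argue through the Dirichlet integral identity rather than the eigen-expansion. Green's formula on $A_T$, using $h=0$ on $\eta=0$, gives
$$\int_{A_T}|\nabla h|^2 = \int_{\text{top}} h\,h_\eta\, d\xi + \Big[\int_0^\eta h\,h_\xi\, d\eta'\Big]_{\xi=-T}^{\xi=-T+\pi/b}.$$
The bracketed term is bounded by a constant $C(\eta)$ independent of $T$, since $h$ is bounded and, by interior estimates, so is $\nabla h$ on the strip $0\le \eta'\le \eta<1$. This is the same estimate used to derive \eqref{SN2.4}.

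\textbf{Step 2: estimate the top term.} The key inequality is
$$\int_{\text{top}} h\,h_\eta \le \int_{A_T}|h_\xi|^2 + \frac1\eta\int_{\text{top}}h^2 + O(1).$$
To obtain it, I would split $h=h_0+h_1$, where $h_0$ is the harmonic function in $A_T$ with boundary values $h$ on the top side and $0$ on the other three sides, and $h_1$ corrects the vertical sides. The correction $h_1$ is harmonic, vanishes on the top and bottom, and has boundary data bounded independently of $T$ on the two vertical sides. Its energy therefore concentrates near the ends and is $O(1)$ uniformly in $T$, which I would check by separation of variables in $\eta$ via $\sin(n\pi\eta'/\eta)$ with exponential decay in $\xi$. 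Applying Lemma \ref{SN-1} to $h_0$ and controlling the cross terms between $h_0$ and $h_1$ by Cauchy--Schwarz gives
$$\int_{A_T}|\nabla h|^2\le \int_{A_T}|h_\xi|^2+\frac1\eta\int_{\text{top}}h^2+O\big(1+\|\nabla h\|_{L^2(A_T)}\big).$$
The $O(\sqrt T)$ error is harmless after dividing by $2T$.

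\textbf{Step 3: pass to means.} Divide by $\pi/b\sim 2T$ and let $T\to\infty$. Almost periodicity of $\nabla h$ on each line $l_a$, together with uniform continuity in $a$, makes $\frac1{2T}\int_{-T}^{T}d^i\,d\xi$ converge to $\mathcal M(d^i)$ for $i=1,2,3$. Fubini is needed for $d^1$ and $d^2$, and uniformity of the means over $\eta'\in[0,\eta]$ follows from almost periodicity on the closed strip. The boundary errors vanish in the limit, which yields $\mathcal M(d^1)\le \mathcal M(d^2)+\frac1\eta\mathcal M(d^3)$.

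The main obstacle is the side correction $h_1$. One must verify that its interaction with $h_0$ is $o(T)$ uniformly, which relies on the uniform gradient bound up to $\eta'=\eta<1$ coming from harmonicity in the larger strip $\Sigma$.
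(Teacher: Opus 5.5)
\textbf{The statement is false, and so is the key step of your proof.} Your strategy is the paper's own. The paper also passes to long rectangles: it solves the Dirichlet problem with smoothly cut-off data $f_T u$, whose difference from $u$ has $H^1(A_T)$-norm bounded independently of $T$. It then applies the preceding rectangle inequality $\int_A|\nabla u|^2\le\int_A|u_x|^2+\frac1r\int_a u^2$ and lets $T\to\infty$. The problem is that the key inequality of your Step 2 cannot be proved by any means. That inequality, the rectangle lemma it rests on, and the statement itself all point the wrong way (reading $\eta$ as the height of the strip, as you and the later application do). Since $h(\xi,0)=0$, Cauchy--Schwarz gives, for every $\xi$,
\[
h(\xi,\eta)^2=\Big(\int_0^\eta h_\eta(\xi,s)\,ds\Big)^2\le \eta\int_0^\eta h_\eta(\xi,s)^2\,ds .
\]
That is, $d^3\le\eta\,(d^1-d^2)$ pointwise, hence $\mathcal M(d^1)\ge\mathcal M(d^2)+\frac1\eta\mathcal M(d^3)$. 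Equality holds only if $h_\eta(\xi,\cdot)$ is constant on $[0,\eta]$, i.e. $h_{\eta\eta}\equiv0$. Combined with your Step 1 identity, this yields $\int_{\mathrm{top}}h\,h_\eta\ge\int_{A_T}|h_\xi|^2+\frac1\eta\int_{\mathrm{top}}h^2-C$, the opposite of what Step 2 asserts.

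A concrete counterexample meeting every hypothesis (harmonic, bounded for $|\eta|<1$, zero on $\eta=0$, periodic normal derivative) is $h=\sin\xi\,\sinh\eta$. For it,
\[
\mathcal M(d^1)-\mathcal M(d^2)=\tfrac12\int_0^\eta\cosh^2s\,ds=\tfrac\eta4+\tfrac{\sinh2\eta}8,
\qquad
\tfrac1\eta\mathcal M(d^3)=\tfrac{\sinh^2\eta}{2\eta}.
\]
The difference is $\frac{\eta^5}{90}+O(\eta^7)$, and it is strictly positive for every $\eta>0$ by the equality case above. The paper's ``elementary computation'' for $\phi_n=\sin(nbx)\sinh(nby)$ fails for the same reason: it amounts to $\frac{t^2}{2}+\frac t4\sinh 2t\le\sinh^2 t$ with $t=nbr$, which is false for every $t>0$. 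So applying the rectangle lemma to $h_0$ imports a false estimate. No choice of truncation can rescue an inequality that $\sin\xi\sinh\eta$ violates. What is true is the reversed inequality, which bounds the flux $\int_{\mathrm{top}}h\,h_\eta$ from below. It is therefore useless for the upper bound the paper needs in Lemma \ref{VX}.

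\textbf{A secondary, fixable defect.} Your sharp split gives $h_0$ boundary values $h$ on the top side and $0$ on the vertical sides. This data jumps at the top corners whenever $h\neq0$ there, so $h_0,h_1\notin H^1(A_T)$. The sine coefficients of the side data of $h_1$ decay only like $1/n$, and the Dirichlet energy diverges logarithmically, so ``energy $O(1)$ uniformly in $T$'' fails. The paper uses the smooth cutoff $f_T$ precisely to avoid this. Given the first point, however, the repair would not save the argument.
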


{\it Proof.}  Since $u$ is  uniformly bounded in $\Sigma $ then for any $r<1, \, \nabla u$ is uniformly bounded in the strip

$$ \Sigma_r = \R \times [0, r] $$

Define a smooth function $f_T$ on $\R$ with the support $( - T - 1, T+1 ), \, T>0 $ such that $f=1$ on $(-T,T)$ and $f(x+T+1) = f(-x +T +1) = \rho (x)$, where $\rho (x) $
is a fixed smooth function on $[0,1]$.

Denote

$$ A_T = \{ (x,y) \in \Sigma_r , \; \; |x| <T+1 \} $$

Let $u_T$ be a solution in $A_T$ of the Dirichlet problem

$$ \De u_T =0 \;\;\;\; in \;\;\; \; A_T, $$
$$  u_T = f_T u \;\;\; on \;\;\;  \pa A_T $$
Then

$$ \int_{A_T} ( (u-u_T)^2 + | \nabla ( u -u_T) |^2) dxdy \leq C ,$$
where constant $C$ is independent of $T$. Thus applying Lemma \ref{SN-1} function $u_T$ and sending $T$ to $\infty $ we get the desirable inequality.

 \medskip  

Denote 

$$ f(y) = y^2 \mathcal M ( h^2( \cdot , y)) $$
From inequalities \eqref{SN2.2} we get

\begin{lemma}\label{sub} Function $f$ is absolutely monotonic on the segment  $0\leq y \leq h $.  For $ n \in \N$,

$$ f^{ (n)} (y) \geq 0 $$
\end{lemma}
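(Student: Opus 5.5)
The plan is to reduce the statement to the analogue of \eqref{SN2.2} for almost periodic functions, replacing the sine expansion \eqref{SN2.10} on $(0,\pi/b)$ by the Bohr--Fourier expansion on the whole line. Throughout I read the segment as $0\le y\le r$ with $r<1$, and $h$ is the bounded harmonic function in $\Sigma$ with $h=0$ on $l$ and $\nabla h$ almost periodic on every $l_\eta$, $|\eta|<1$. As noted before the statement, $h(\cdot,\eta)$ is then almost periodic for each fixed $\eta$.

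First, for each frequency $\la\in\R$ I introduce the Bohr coefficient
$$ a_\la(\eta)=\mathcal M\bigl(h(\cdot,\eta)e^{-i\la \xi}\bigr). $$
Interior estimates for bounded harmonic functions give uniform bounds on all derivatives of $h$ in $\R\times[0,r']$ for every $r'<1$. This justifies differentiating under the mean in $\eta$. Using $h_{\eta\eta}=-h_{\xi\xi}$ and integrating by parts twice in $\xi$ (the boundary terms are $O(1)$, so they vanish after dividing by $2T$), I get
$$ a_\la''(\eta)=\la^2 a_\la(\eta), \qquad a_\la(0)=0. $$
Hence $a_\la(\eta)=c_\la\sinh(\la\eta)/\la$ for $\la\neq 0$, and $a_0(\eta)=c_0\eta$. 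Only countably many $c_\la$ are nonzero, namely the spectrum of $h_\eta(\cdot,0)$.

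Next, Parseval's identity for almost periodic functions gives
$$ g(\eta):=\mathcal M\bigl(h^2(\cdot,\eta)\bigr)=|c_0|^2\eta^2+\sum_{\la\neq0}|c_\la|^2\,{\sinh^2(\la\eta)\over \la^2}, $$
where the series converges for every $0\le\eta<1$ because $h$ is bounded there. Each term is an entire function of $\eta$ with nonnegative Taylor coefficients, since
$$ \sinh^2(\la\eta)={\cosh(2\la\eta)-1\over 2}=\sum_{k\ge1}{(2\la\eta)^{2k}\over 2\,(2k)!}. $$
Therefore the sum is a power series in $\eta$ with nonnegative coefficients, obtained by rearranging nonnegative terms, which is legitimate by Tonelli. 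This power series converges at every point of $[0,1)$, so its radius of convergence is at least $1$. Consequently $g$ is real analytic on $[0,r]$ with $g^{(n)}\ge0$ for all $n$, which is exactly the almost periodic version of \eqref{SN2.2}.

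Finally, $f(y)=y^2g(y)$ is again a power series with nonnegative coefficients. Alternatively, apply the Leibniz rule: $(y^2)^{(k)}\ge0$ on $[0,r]$ and $g^{(n-k)}\ge0$, so every term is nonnegative. In either form, $f^{(n)}(y)\ge0$ on $0\le y\le r$.

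The step I expect to be the main obstacle is the rigorous justification of the ODE for $a_\la$. One must differentiate twice under the limit defining $\mathcal M$ and control the boundary terms in the $\xi$-integration by parts uniformly in $\eta\in[0,r]$. I would handle this with the uniform $C^2$ bounds on $\R\times[0,r']$ for some $r<r'<1$. With those bounds, ${1\over 2T}\int_{-T}^T h(\xi,\eta)e^{-i\la\xi}\,d\xi$ and its $\eta$-derivatives converge uniformly in $\eta$, so the limit may be differentiated. The second delicate point is Parseval for $h(\cdot,\eta)$, which requires almost periodicity on each line $l_\eta$; this is supplied by the result of [C] quoted above.
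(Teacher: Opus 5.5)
Your proof is correct, but it implements the key idea differently from the paper.

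The paper's proof is the single remark that the lemma follows from \eqref{SN2.2}, i.e.\ from the $\sin(nbx)\sinh(nby)$ expansion \eqref{SN2.10} of a harmonic function in a rectangle vanishing on three sides. The passage from rectangles to the mean $\mathcal M(h^2(\cdot,y))$ over the whole line is left implicit. It would presumably go through truncated problems like the $u_T$ in the proof of Lemma \ref{SN-2}, together with the fact that absolute monotonicity survives suitable limits (Bernstein's characterization by nonnegative iterated differences, or passing to limits in the nonnegative Taylor coefficients).

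You instead expand directly in Bohr--Fourier modes, derive $a_\la''=\la^2a_\la$ with $a_\la(0)=0$, and apply Parseval, so each frequency contributes $|c_\la|^2\sinh^2(\la\eta)/\la^2$. The mechanism is the same as in \eqref{SN2.2}, but your route needs no limiting procedure. It also gives more: $\mathcal M(h^2(\cdot,\eta))$ is a power series with nonnegative coefficients and radius of convergence at least $1$, so $f$ is absolutely monotonic on all of $[0,1)$. This also makes the misprint $y\le h$ in the statement harmless. The paper's route, in exchange, uses only the elementary rectangle computation already done for Lemma \ref{SN-1} and avoids Parseval for almost periodic functions.

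Two small points in your write-up can be tightened.
\begin{itemize}
\item The paper only asserts almost periodicity of $\nabla h$ on the lines $l_\eta$, not of $h$ itself. This follows by writing $h(\xi,\eta)=\int_0^\eta h_\eta(\xi,s)\,ds$: the Riemann sums converge uniformly in $\xi$ because $h_{\eta\eta}$ is bounded on $\R\times[0,r']$.
\item Differentiation under $\mathcal M$ can be avoided altogether. The averages $a_{\la,T}(\eta)=\frac1{2T}\int_{-T}^T h(\xi,\eta)e^{-i\la\xi}\,d\xi$ satisfy $a_{\la,T}''-\la^2a_{\la,T}=O(1/T)$ uniformly on $[0,r']$, with $a_{\la,T}(0)=0$. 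Variation of constants then identifies the limit as $c_\la\sinh(\la\eta)/\la$.
\end{itemize}
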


 \medskip  

Notice, that by Bernstein's theorem absolutely monotonic functions have holomorphic extensions.

\begin{lemma}\label{VX} Let $H$ be a pluriharmonic function in $\Om_r, \, r<1,$ vanishing on the real subspace. Then
$$  \int_{\pa \Om_r} H { \pa H \over \pa n } d\si \leq 4  \int_{\Om_r} |\nabla_X H |^2 dxdy 
+ { 1 \over r } \int_{\pa \Om_r} H^2 d\si $$
\end{lemma}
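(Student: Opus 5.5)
The plan is to reduce the inequality to a Dirichlet-energy statement and then analyze it one Fourier mode at a time, in the spirit of Lemma \ref{SN-1}.

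\textbf{Step 1: rewrite the left-hand side.} Since $H$ is pluriharmonic, it is in particular harmonic in $\Om_r\ss\R^6$. Green's formula therefore gives
$$\int_{\pa \Om_r} H {\pa H\over \pa n}\, d\si=\int_{\Om_r}|\nabla H|^2dxdy=\int_{\Om_r}\left(|\nabla_X H|^2+|\nabla_Y H|^2\right)dxdy .$$
The claim thus becomes the bound
$$\int_{\Om_r}|\nabla_Y H|^2dxdy \le 3\int_{\Om_r}|\nabla_X H|^2dxdy+\frac1r\int_{\pa\Om_r}H^2d\si .$$

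\textbf{Step 2: use the holomorphic structure.} Since $H=\Re\, h$ with $h$ holomorphic, the Cauchy--Riemann equations give $\pa H/\pa y_j=-\pa (\Im h)/\pa x_j$, so $|\nabla_Y H|=|\nabla_X \Im h|$. This alone does not close the estimate, so we use Fourier analysis in $x$ instead.

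\textbf{Step 3: Fourier expansion.} Expand $H(x,y)=\sum_k e^{2\pi i k\cdot x/a}c_k(y)$ on the torus $M$. Pluriharmonicity forces $c_k$ to satisfy $D^2c_k=|\kappa|^2\,\mathrm{Hess}$-type relations; more precisely, $H=\Re\sum_k \hat h_k e^{i\kappa\cdot z}$ with $\kappa=2\pi k/a$. Vanishing on $X$ then forces
$$c_k(y)=\al_k\sinh(\kappa\cdot y),$$
up to the reality pairing between $k$ and $-k$, so each mode is an odd function of the single variable $\kappa\cdot y$.

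The modes are orthogonal in $L^2(M)$ for each fixed $y$. Both sides therefore split into sums over $k$, and it suffices to prove the inequality for a single mode $\phi=\sinh(\kappa\cdot y)e^{i\kappa\cdot x}$ on $B_r$. Writing $s=|\kappa|$ and $t=\kappa\cdot y/s$, the required per-mode inequality is
$$s^2\int_{B_r}\cosh^2(st)\,dy\le 3s^2\int_{B_r}\sinh^2(st)\,dy+\frac1r\int_{\pa B_r}\sinh^2(st)\,d\si .$$
This is the three-dimensional analogue of Lemma \ref{SN-1}. For $k=0$ the mode vanishes, so there is nothing to check.

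\textbf{Step 4: prove the one-mode inequality.} Use $\cosh^2=1+\sinh^2$. It then suffices to show
$$s^2|B_r|\le 2s^2\int_{B_r}\sinh^2(st)\,dy+\frac1r\int_{\pa B_r}\sinh^2(st)\,d\si .$$
Apply the divergence theorem to the field $\sinh^2(st)\,y$ on $B_r$. Its divergence is $3\sinh^2+st\sinh(2st)$, and $y\cdot n=r$ on $\pa B_r$. Hence
$$\frac1r\int_{\pa B_r}\sinh^2 d\si=\frac{1}{r^2}\int_{B_r}\left(3\sinh^2(st)+st\,\sinh(2st)\right)dy .$$
Using $\sinh^2 x\ge x^2$ and $x\sinh 2x\ge 2x^2$, the right-hand side is at least $\frac{5s^2}{r^2}\int_{B_r}t^2\,dy=s^2|B_r|$, since $\int_{B_r}t^2\,dy=r^2|B_r|/5$. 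This gives the needed bound, with room to spare.

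The main obstacle is Step 3: justifying the single-variable structure $c_k(y)=\al_k\sinh(\kappa\cdot y)$ from pluriharmonicity together with vanishing on $X$, and handling the reality pairing of $k$ and $-k$ correctly. Convergence of the expansion on $\bar\Om_r$ also needs care; it can be handled by applying the argument on $\Om_{r'}$ for $r'<r$ and letting $r'\to r$.
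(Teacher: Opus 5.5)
Your Fourier route works, but Step~3 contains a false claim: the $k=0$ mode need not vanish. Write $\partial_{z_j}\partial_{\bar z_l}H=0$ in real form ($H_{x_jx_l}+H_{y_jy_l}=0$, $H_{x_jy_l}=H_{y_jx_l}$) and insert $H=\sum_k c_k(y)e^{i\kappa\cdot x}$. For every $k$ this gives $D^2c_k=c_k\,\kappa\kappa^T$ and $\kappa_j\partial_lc_k=\kappa_l\partial_jc_k$. For $\kappa\neq0$ the second relation makes $c_k=f(\kappa\cdot y)$ on the convex set $B_r$, the first gives $f''=f$, and $c_k(0)=0$ leaves $\alpha_k\sinh(\kappa\cdot y)$. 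So what you call the main obstacle is a two-line computation.

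For $k=0$, however, you only get $D^2c_0=0$ and $c_0(0)=0$, i.e.\ $c_0(y)=b\cdot y$ with arbitrary $b\in\R^3$. For example, $H=y_1=\Re(-iz_1)$ satisfies all the hypotheses. Your formula $H=\Re\sum_k\hat h_ke^{i\kappa\cdot z}$ misses this term because $\Omega_r$ is not simply connected. The relation $H=\Re h$ holds with $h$ holomorphic only on the convex cover $\R^3+iB_r$, where $h$ may have imaginary periods $h(z+am)-h(z)=i\gamma\cdot m$; removing them leaves exactly $-\gamma\cdot y/a$. In the paper's application $H$ is a component of $\Im W$ with $W$ periodic and holomorphic, so $b=0$ there, but the lemma is stated for general $H$.

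This mode cannot be dismissed, because it is the extremal one. Apply your divergence identity to $(b\cdot y)^2y$, whose divergence is $5(b\cdot y)^2$. This gives $\frac1r\int_{\partial B_r}(b\cdot y)^2\,d\sigma=|b|^2|B_r|=\int_{B_r}|\nabla_Y(b\cdot y)|^2dy$, so the inequality holds with equality for this mode. It is also the $s\to0$ limit of your one-mode inequality, where the slack disappears. Add this line (Parseval at each fixed $y$ still splits everything over $k$) and the proof is complete.

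Apart from this, your argument is correct and genuinely different from the paper's. The paper proceeds in five steps:
\begin{itemize}
\item it restricts $H$ to complex lines $x_0+\zeta e$, $e\in S^2$, on which $H$ is harmonic and vanishes for real $\zeta$;
\item it uses unique ergodicity of the linear flow in direction $e$ on $M$ to identify means along these lines (almost periodic functions) with torus averages;
\item it applies, line by line, a strip inequality derived from the rectangle lemma $\int_A|\nabla u|^2\le\int_A|\partial_xu|^2+\frac1r\int_au^2\,dx$;
\item it integrates over $S^2$ to get \eqref{5.3}, which carries the weight $r^2/|y|^2$ on $|\nabla_XH|^2$;
\item it removes that weight via the monotonicity of $r^{-2}\int_{\partial\Omega_r}|\nabla_XH|^2d\sigma$.
\end{itemize}
Your mode-by-mode computation avoids ergodic theory, almost periodicity and the weight removal, and does not need $r<1$. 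It is also sharper. Step~4 actually gives $s^2|B_r|\le\frac1r\int_{\partial B_r}\sinh^2(st)\,d\sigma$ without using the term $2s^2\int_{B_r}\sinh^2(st)\,dy$, so you obtain the lemma with $2$ in place of $4$.

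Your proof is also independent of that rectangle lemma, which as written fails for every nonzero $u$. Since $u(x,0)=0$, Cauchy--Schwarz gives $\frac1r\int_au^2\,dx\le\int_Au_y^2$, the reverse of what the lemma asserts. The correct one-mode bound, obtained from $\sinh(sr)\ge sr$ exactly as in your Step~4, is $\int_A|\nabla u|^2\le 2\int_A|\partial_xu|^2+\frac1r\int_au^2\,dx$.
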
 

{\it Proof.} 
Let $y\in S^2\ss Y \ss \C^3$. Define
on the strip $\Sigma$ function $g(\xi, \eta )$,

$$ g(\xi, \eta ) = H(\xi i y + \eta y ) $$
Denote

$$ \de (\xi , \eta ) = |  H_\xi  ( \xi i y + \eta y) |^2  +  | H_\eta  ( \xi i y + \eta y) |^2 ,$$

$$ \ga (\xi , \eta ) = | H_\xi  ( \xi i y + \eta y) |^2  ,$$

$$ \beta (\xi , \eta ) = | H ( \xi i y + \eta y) |^2  $$
and

$$ D_1 ( \xi) = \int^r_0 \de ( \xi , \eta) d \eta ,$$

$$ D_2 ( \xi) = \int^r_0 \ga ( \xi , \eta) d \eta ,$$

$$ D_3 ( \xi) = \beta ( \xi , r) d \eta $$

 Consider
now the flow $i { \pa / \pa y }  $ on $M_y = M+y$. For almost all $y$ on $S^2$ the flow $i { \pa / \pa y } $ is a uniquely ergodic flow on $M$, 
its trajectory is dense on $M$ and for any continuous function on $M$ its time-average over any trajectory is equal to space-average over $M$. We denote by $\mathcal E \ss \S^2$ the set of $y$ for which the flow $i { \pa / \pa y } $ is an ergodic flow on $M$,   Thus since $V$ a pluriharmonic function from \eqref{SN2.4} we have

 \beq\label{5.2}  \mathcal M ( D_1) = { 1 \over |M |}  \int_{M_y} H H_y dx \eeq
By Lemma  \ref{SN-2}
 
$$  \mathcal M (D^1) \leq   \mathcal M (D^2) + \frac 1r  \mathcal M (D^3) $$
Hence

$$  \int_{M_y} H H_y dx \leq   \int^r_0  \int_{M_{ \eta y }}| \nabla_X H   |^2 dx d\eta + \frac 1r  \int_{M_{ ry }}|  H   |^2 dx  $$
Since all integrals in the last inequality are continuous functions of $y \in \S^2 $ and

$$ \int_{ \pa \Om_r} HH_y d\si =  \int_{\Om_r } | \nabla H |^2 dxdy $$
then integrating the last  inequality over $S^2 $ we get

 \beq\label{5.3}   \int_{\Om_r} { r^2 \over |y|^2 } | \nabla_X H |^2 dxdy + \frac 1r   \int_{\pa \Om_r}  |  H |^2 dxdy  \geq  \int_{\Om_r} | \nabla H |^2 dxdy \eeq
$0<r<1$.
Since $| \nabla_X H |^2 $ is a subharmonic function in $\Om $ and $|y|^{-1}$ is a harmonic function in $\Om$ we have the following inequalities for
$0<r<R<1$

$$ 0 \leq \int_{\Om_R \setminus \Om_r} \left( { 1 \over |y| } - \frac 1R \right) \De | \nabla_X H |^2 dxdy = $$
$$ = - { 1 \over r^2} \int_{ \pa \Om_r }  | \nabla_X H |^2 d\si + { 1 \over R^2} \int_{ \pa \Om_R }  | \nabla_X H |^2 d\si -  \left( { 1 \over r } - \frac 1R \right)  \int_{ \pa \Om_r } 
{\pa \over \pa r}   | \nabla_X H |^2 d\si $$
$$ = - { 1 \over r^2} \int_{ \pa \Om_r }  | \nabla_X H |^2 d\si + { 1 \over R^2} \int_{ \pa \Om_R }  | \nabla_X H |^2 d\si -  \left( { 1 \over r } - \frac 1R \right)  \int_{  \Om_r } 
\De | \nabla_X H |^2 dxdy$$
$$ \leq - { 1 \over r^2} \int_{ \pa \Om_r }  | \nabla_X H |^2 d\si + { 1 \over R^2} \int_{ \pa \Om_R }  | \nabla_X H |^2 d\si $$
Hence

$${ 1 \over r^2} \int_{ \pa \Om_r }  | \nabla_X H |^2 d\si \leq { 1 \over R^2} \int_{ \pa \Om_R }  | \nabla_X H |^2 d\si $$
for $0<r<R $. Therefore 

$${ 1 \over r^2} \int_{ \pa \Om_r }  | \nabla_X V |^2 d\si $$
is an increasing function of $r$. 
Hence

$$ \int_{  \Om_{ r/2 } } { 1\over |y|^2 } | \nabla_X V |^2 dxdy  \leq  \int_{ \Om_r - \Om_{r/2} } { 1 \over |y |^2} | \nabla_X V |^2 dxdy $$
Thus the lemma follows from the last inequality  and \eqref{5.3}

 \medskip  

From Lemma \ref{sub}, using the same argument as in the proof of the last lemma, we get 

\begin{lemma}\label{mon} Let $H$ be a pluriharmonic function in $\Om_r, \, r<1,$ vanishing on the real subspace. Let
$$ F(r) =   \int_{\pa \Om_r} H^2 d\si $$
Then
$$ F^{(n)} (r) \geq 0 ,$$
where $0\leq y \leq r , \, n \in \N $.
\end{lemma}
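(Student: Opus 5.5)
The plan is to reduce the statement to the one-dimensional absolute monotonicity in Lemma \ref{sub}, applied along each complex line $\{x + \zeta y : \zeta\in\C\}$ with $y\in S^2$. We then integrate over $x\in M$ and over directions $y\in S^2$, exactly as in the proof of Lemma \ref{VX}. Write $\pa\Om_r = M\times rS^2$ with area element $r^2\,dx\,d\si_0(\om)$, $\om\in S^2$. This gives
$$ F(r)= r^2\int_{S^2}\int_M H^2(x+ir\om)\,dx\,d\si_0(\om) = \int_{S^2} f_\om(r)\,d\si_0(\om),\qquad f_\om(\eta)=\eta^2\int_M H^2(x+i\eta\om)\,dx . $$
It therefore suffices to show that, for each fixed $\om$, every derivative of $f_\om$ is nonnegative on $[0,r]$. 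Then $F^{(n)}(r)=\int_{S^2} f_\om^{(n)}(r)\,d\si_0\ge 0$, where differentiation under the integral is justified by the smoothness of $H$ up to $\pa\Om_r$ and the compactness of $S^2$.

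For fixed $\om$, consider $h(\xi,\eta)=H(x+\xi\om+i\eta\om)$. Since $H$ is pluriharmonic, its restriction to the complex line through $x$ in direction $\om$ is harmonic in $(\xi,\eta)$. It vanishes on $\eta=0$ because $H=0$ on $X$, and it is bounded on the strip since $M$ is compact. For $\om\in\mathcal E$ the flow along $\om$ is uniquely ergodic on $M$. Hence the mean $\mathcal M(h^2(\cdot,\eta))$ taken along a line, averaged over initial points, equals $|M|^{-1}\int_M H^2(x+i\eta\om)\,dx$. Lemma \ref{sub} then gives that $\eta^2\mathcal M(h^2(\cdot,\eta))$ is absolutely monotonic, so $f_\om^{(n)}\ge0$ for $\om\in\mathcal E$.

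For $\om\notin\mathcal E$ I would argue by continuity. The set $\mathcal E$ has full measure, and $f_\om^{(n)}(\eta)$ is continuous in $(\om,\eta)$, so the inequality extends to all $\om$. Alternatively, one can avoid ergodicity entirely: expand $H(\cdot+i\eta\om)$ in the Fourier series on $M$. Each mode $e^{i k\cdot x}$ of a pluriharmonic function vanishing on $X$ has $\eta$-dependence of the form $\sinh(\eta\, k\cdot\om)$ times a constant, plus a linear term $c\,\eta$ for $k\cdot\om=0$. Parseval then writes $\int_M H^2\,dx$ as a nonnegative combination of $\sinh^2$ and $\eta^2$ terms, all of which are absolutely monotonic. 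This is the same mechanism as \eqref{SN2.2}.

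The main obstacle is making the transfer from Lemma \ref{sub}, which is stated for means along real lines in a strip, to spatial averages over $M$ rigorous. In particular, the boundedness and almost periodicity hypotheses have to be checked. For that reason I would actually present the direct Fourier/Parseval argument as the proof and mention the ergodic route only as the analogue of Lemma \ref{VX}. One caveat remains. The extra factor $r^2$ coming from $d\si$ multiplies an absolutely monotonic function, and the product of two absolutely monotonic functions is again absolutely monotonic, so this factor is harmless.
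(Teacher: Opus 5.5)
Your proposal is correct. The ergodic route you sketch first is exactly the paper's proof, which consists only of the remark that the lemma follows from Lemma~\ref{sub} by the argument of Lemma~\ref{VX}: apply the strip lemma along complex lines $x+\zeta\om$ with $\om\in\mathcal E$, use unique ergodicity to turn line means into averages over $M$, and integrate over $S^2$.

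The Fourier/Parseval argument you choose to present is a genuinely different and more direct route. Let $\hat H_k(y)$ be the Fourier coefficient of $H(\cdot+iy)$ at frequency $k\in\frac{2\pi}{a}\Z^3$, and take the $k$-th coefficient of the identity $(\om\cdot\nabla_x)^2H+(\om\cdot\nabla_y)^2H=0$, which is harmonicity on complex lines. This gives $g_k''=(k\cdot\om)^2 g_k$ and $g_k(0)=0$ for $g_k(\eta)=\hat H_k(\eta\om)$. That is exactly your $\sinh$/linear dichotomy, and Parseval does the rest.

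What this route buys:
\begin{itemize}
\item It bypasses almost periodic functions and the rectangle-to-strip passage behind Lemma~\ref{sub}, which the paper only sketches.
\item It avoids the exceptional set $S^2\setminus\mathcal E$.
\item It proves more, namely $F(s)=\sum_{m\ge 2}a_m s^{2m}$ with all $a_m\ge 0$ for $0\le s<r$. This is how the garbled range ``$0\le y\le r$'' in the statement should be read. The holomorphic extension, for which the paper appeals to Bernstein's theorem, then comes for free.
\end{itemize}

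One step should be written this way rather than as ``a nonnegative combination of absolutely monotonic functions'': the infinite sum. Each $|g_k|^2$ and the factor $\eta^2$ have nonnegative Taylor coefficients. By Tonelli, the Parseval sum (finite for $\eta<r$), and then its integral over $S^2$, are power series with nonnegative coefficients converging on $[0,r)$. This also justifies termwise differentiation.

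The paper's route mainly buys uniformity of presentation with Lemma~\ref{VX}. On the torus it yields nothing that the direct expansion does not.
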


 \medskip  

 Let $w=u+iv$ be a holomorphic function in $\Om_r$, real on the real subspace.  Then

   \beq\label{SN2.3} \int_M \sum_j\left( { \pa v(x,0) \over \pa y_j } \right)^2 dx   = \int_M \sum_j\left( { \pa u(x,0) \over \pa x_j } \right)^2 dx  \eeq
   and
   
      \beq\label{SN2.5} \int_M \sum_j\left( { \pa^3 v(x,0) \over \pa y^3_j } \right)^2 dx   = \int_M \sum_j\left( { \pa^3 u(x,0) \over \pa x^3_j } \right)^2 dx  \eeq
      Since
      
 $$ || u(\cdot , 0) ||^3_{H^1(M)} \leq C || u(\cdot ,0) ||^2_{L^2(M)}     \left( \sum_j ||  { \pa^3 u(x,0) \over \pa x^3_j }  ||_{L^2(M) } \right) $$
 Therefore from \eqref{SN2.3}, \eqref{SN2.5} we have

  \beq\label{SN2.6}\sum_j ||  { \pa^3 u(x,0) \over \pa y^3_j }  ||_{L^2(M) } \geq C || u(\cdot , 0) ||^3_{H^1(M)} / || u(\cdot ,0) ||^2_{L^2(M)}    \eeq

 \medskip  

Now we  consider problem \eqref{NS} - \eqref{CP} with the initial data $w^0$ in $L^p(M)$  for $p>3$. Denote 
$ A = || w^0 ||_{L^p( M)} $. We assume that for all $t\geq 0$ external force $f( \cdot , t) $ is a divergence-free and real-analytic in the spatial variables
with the analyticity radius $r>0$. Let $f+ig$ be external force's analytic extension to $\Om_r$ and we assume

 \beq\label{ExF}   \sup_{t\geq 0, |y|<r}  \; ( | f( \cdot, y, t) | + | g( \cdot, y, t) | ) = \ga < \infty \eeq

Developing the Kato approach to mild solutions, [K], Grujić and Kukavica proved, [GK], that the problem \eqref{NS} - \eqref{CP} has a classical solution $w(x,t)$
on $M\times (0,T) $, $w\in C([ 0,T]) L^p(M) $,

$$ T = \min \left( { 1 \over  Cp^2 A^{2p/( p-3) }}, {1 \over Cp^2A^2 } , { A \over C \ga } \right)  $$
$C= C( p, a,  \nu ) > 0$, such that $w( \cdot , T )$ has a holomorphic extension $w=u+iv$ in $\Om_R $,

$$ R = T^{1/2} /C $$
moreover,

$$ || u( \cdot , y, T) ||_{L^p(M)} +  || v( \cdot , y, T) ||_{L^p(M)}  \leq CA $$
for $(x,y)\in \Om_R$. Thus

$$ || w( \cdot , T ) ||^2_{L^2 (  \Om_r) } \leq C r^3$$
$C= C( p,a, \nu , A, \ga ) > 0, \,  0<r<R$. Since $v$  vanishes on the real subspace of $\C^3$ 

$$ || v( \cdot , T ) ||^2_{L^2 (  \Om_r) } \leq C r^5$$

Therefore if $L^p$-norm of the initial data of problem is bounded by $A$ we may assume without loss that
the initial data $w_0$ has a holomorphic extension in $\Om_R, \, R=R( p,A, \ga)$.

\medskip Let $w(x,t)$ be a weak Leray-Hopf solution of problem \eqref{NS} - \eqref{CP} with the initial data $w_0 \in L^p, \, p>3 $. From [GK], [S], [G]
follows the existence of $t_0>0$ such that $w$ is a classical solution on $M\times (0, t_0)$. If $w$ is not a classical solution on $M \times (0, \infty )$ then there is 
a maximal interval $(0,t_1), \, t_0<t_1< \infty$, for which $w$ is a classical solution on $M\times (0, t_1) $.  Leray in [Le] called the number $t_1$
``époque de irrégularite".

\begin{theorem} \label{L}  Let $w(x,t)$ be a weak Leray-Hopf solution of problem \eqref{NS} - \eqref{CP} with the initial data $w_0 \in H^1(M) $. Let
$0<t_1 <\infty $ be an ``époque de irrégularite" for $w$. Then

$$ ||\nabla w ||_{L^2 ( M)} \geq {C \nu^{3/4} \over ( t_1-t)^{1/4} } $$
for $t<t_1$.
\end{theorem}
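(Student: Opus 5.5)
The plan is to obtain the lower blow-up rate from the Riccati-type differential inequality for the enstrophy, the same one used in the proof of Proposition \ref{pr}, together with the fact that $t_1$ is the endpoint of the maximal interval of regularity.

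\textbf{Step 1: the enstrophy inequality.} On $(0,t_1)$ the solution $w$ is classical, so we may multiply \eqref{NS} by $\De w$ and integrate over $M$. Set $y(t)=\|\nabla w(\cdot,t)\|^2_{L^2(M)}$. Then
$$ \frac12 y' + \nu\|\De w\|^2_{L^2} \le |(w\cdot\nabla w,\De w)| + |(f,\De w)| . $$
We estimate the trilinear term by Hölder, $\|w\|_{L^6}\|\nabla w\|_{L^3}\|\De w\|_{L^2}$. The Sobolev inequality gives $\|w\|_{L^6}\lesssim \|\nabla w\|_{L^2}$, after subtracting the mean, which is harmless since the mean is controlled by the $L^2$ energy. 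Gagliardo--Nirenberg gives $\|\nabla w\|_{L^3}\lesssim \|\nabla w\|_{L^2}^{1/2}\|\De w\|_{L^2}^{1/2}$. Hence the trilinear term is at most $C y^{3/4}\|\De w\|^{3/2}$. Young's inequality absorbs $\|\De w\|^{3/2}$ into $\nu\|\De w\|^2$ and leaves $C\nu^{-3}y^3$. The forcing term is bounded by $\frac\nu4\|\De w\|^2+C\nu^{-1}\|f\|^2$. This yields the analogue of \eqref{SN-5}:
$$ y' \le C\nu^{-3}y^3 + C_1 , $$
where $C_1$ depends on $f$ and $\nu$.

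\textbf{Step 2: comparison with the explicit Riccati solution.} Since $t_1$ is an époque d'irrégularité, $y(t)\to\infty$ as $t\to t_1^-$. If instead $\|\nabla w\|_{L^2}$ stayed bounded on a left neighbourhood of $t_1$, the $H^1$ local theory quoted in the Remark after Theorem \ref{NN} would extend the classical solution past $t_1$. That contradicts maximality, using weak–strong uniqueness of Leray–Hopf solutions on intervals where a strong solution exists.

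Now fix $t<t_1$ and suppose $y(t)$ is large, say $C\nu^{-3}y^3\ge C_1$. Then $y'\le 2C\nu^{-3}y^3$ on the set where $y$ stays large. Integrating $-\frac{d}{ds}\frac{1}{2y^2}\ge -2C\nu^{-3}$ from $t$ to $s\to t_1$ and using $1/y(s)^2\to0$ gives
$$ \frac1{y(t)^2}\le 4C\nu^{-3}(t_1-t). $$
Therefore $\|\nabla w\|_{L^2}=y^{1/2}\ge c\,\nu^{3/4}(t_1-t)^{-1/4}$. If instead $y(t)$ is small, $y\le (C_1\nu^3/C)^{1/3}$, a short continuation argument shows $y$ cannot reach $\infty$ within time $t_1-t$ unless $t_1-t$ is bounded below. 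After adjusting $C$, the bound $C\nu^{3/4}(t_1-t)^{-1/4}$ then holds trivially for $t$ far from $t_1$, or the constant absorbs $f$. I would state the dependence of $C$ on $f$ explicitly, or treat $f=0$ as the clean case.

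\textbf{Main obstacle.} The delicate point is Step 2's use of the blow-up of $y$ at $t_1$, namely justifying that $\limsup_{t\to t_1}\|\nabla w\|_{L^2}=\infty$. For this I would invoke the Leray structure theorem and local $H^1$ existence with existence time $\gtrsim \nu^3/\|\nabla w\|^4_{L^2}$. That existence time is itself the same Riccati estimate run forward, so the argument is consistent with Step 1.
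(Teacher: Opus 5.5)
Your proposal is correct and follows the route the paper relies on: the paper gives no proof of its own and simply cites the standard argument of [Le] and [G], which is the enstrophy Riccati inequality (your Step 1 reproduces \eqref{SN-5}, needing only $f$ bounded in $L^2$ rather than $H^1$) integrated up to $t_1$, where $\|\nabla w\|_{L^2}$ must blow up by the $H^1$ local existence time together with weak--strong uniqueness. One correction applies for $f\neq 0$: the bound does not hold ``trivially'' for $t$ far from $t_1$, since nothing prevents $\nabla w(t)$ from being small there. Applied at the last time $s^*<t_1$ with $y(s^*)=Y_0=(C_1\nu^3/C)^{1/3}$, your argument gives the bound on $(t_1-\delta,t_1)$ with $\delta\sim\nu^3/Y_0^2$, which is all the paper later uses, and for $f=0$ it gives the bound for every $t<t_1$.
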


  Theorem \ref{L} is essentially due to Leray [Le], see [S], [G]. As one can see from the proof  in [G], that the theorem holds with non-zero external forces
  $f$ if we assume that the norm of $f$ is uniformly bounded in $H^1( M) \times (0, t_1)$. 
  
\smallskip

Let $w$ be a weak Leray-Hopf solution of problem  \eqref{NS} - \eqref{CP} with the initial data $w^0 \in L^p(M), \, p>3 $ and let $t_1 >0$ be its
``époque de irrégularite". From the two last theorems follows that $w(x,t) $ is real-analytic for $t\in (0,t_1)$ with the radius of analyticity $\rho (t) >0$.

  {\it Proof of Theorem \ref{NN}. } First we prove the existence of the classical solution of problem   \eqref{NS} - \eqref{CP}.  for the initial data $w^0$ in $L^p( M), \, p>3 $. Let $w$ be a weak Leray-Hopf solution this problem. Assume by contradiction that $w$ is not
  real-analytic and hence not a classical solution. Then there exists an ``époque de irrégularité" $0 < t_1 < \infty$. 
  
  \smallskip 

There is a constant $N>0$ depending on $f, w_0$ such that for $t\in (0, t_1) $

$$ || w(\cdot ,t) ||_{L^2(M)} \leq N $$

\medskip Denote by $W=W=U+iV =W^m= U^m+ iV^m$ a solution of Faedo-Galerkin problem  with the initial data $\pi_m w^0$, where $m \in \N $ is a sufficiently
large constant, which we define later. Denote

$$ E( r,t) = \int_{\Om_r} |V |^2 dxdy ,$$

$$ e( r,t) = \int_{ \pa \Om_r} |V |^2 d\si ,$$

$$ D( r,t) = \int_{\Om_r} | \nabla V |^2 dxdy $$
Then 

$$e =E_r ,$$
$$e_r = 2D + 2e/r ,$$
 \beq\label{Err}   E_{rr} -  2 E_r /r  = 2 \int_{\pa \Om_r}( V, { \pa V \over \pa n } )d\si \eeq

\smallskip  

Define

$$ y_k (x) = { k \over 120 } x^5 +x^{41/8} $$

\smallskip  

 For sufficiently large $k$ and $x\in (0, 1 /k^{3/2}) $ the following inequality holds

 \beq\label{SN2.7}  { 1 \over x^{3/2}} \sqrt { y'_k (x) }<   x^{1/6} \eeq

Assume that

$$ {\pa^5 E(0,t_0) \over \pa r^5 } \geq k ,  $$
then from  Lemma \ref{mon} and inequality \eqref{SN2.6} follows, that for sufficiently large $k$ 

 \beq\label{SN-1}    E_r ( 1 / k^{3/2}, t_0) >  y'_k ( 1 /k^{3/2} ),  \eeq
Assume that

$$ {\pa^5 E(0,t_0) \over \pa r^5 } = k   $$
Then there is an $\varepsilon >0$ such that for $0<x< \varepsilon $ the following inequality holds

 \beq\label{SN-6}    E ( x, t_0) <  y_k ( x )  \eeq
Notice, that the inequality

$$ {\pa^5 E(0,t_0) \over \pa r^5 } \geq 2k ,  $$
implies

 \beq\label{SN-2}    E ( x, t_0) >  y_k ( x ),  \eeq

\medskip  Taking the difference between the equalities of Theorems \ref{FGE} and \ref{FGE2} we get

$$ 2 \nu \int_{ \Om_r} | \nabla_X v |^2 dxdy  = - \int_{\Om_r } { \pa |v|^2 \over \pa t} dxdy  - \frac 1r \int_{\pa \Om_r } \tilde v |v|^2 d\si + 2 \int_{ \Om_r }  v g dxdy $$
From \eqref{Err}  and Lemma \ref{VX} for $t_2 < t< t_1$,

$$ E_{rr} -2E_r / r \leq 8 \int_{ \Om_r} | \nabla_X v |^2 dxdy + { 2 \over r}  \int_{\pa \Om_r} |  v |^2 d\si  $$
$$ = - { 4 \over \nu } E_t  - { 4 \over \nu r } \int_{\pa \Om_r } \tilde v |v|^2 d\si + {8 \over \nu} \int_{ \Om_r }  v g dxdy   + { 2 \over r} E_r(t) $$

By Lemma \ref{BasicHol2} and the bounds for external forces \eqref{ExF} 
we get  the following {\it nonlinear differential inequality of the parabolic type
for the complex energy }

 \beq\label{parce}  E_{rr} + \frac {4}\nu E_t   \leq { C_0 \over \nu r ^{3/2} }  E_{rr} E_r^{1/2} + \frac 4r E_r + Cr^3E^{1/2}_r   \eeq
where $0<r<1$   $C>0$ depends on $p,a, f$. Notice, that  in \eqref{parce} for small $E_r$ {\it time has the backward direction},  comparing it with the initial Navier-Stokes equations.
Similarly, backward time appears in the diffusion processes like the heat equation, or, more general, a parabolic system in a complex domain if one rewrite it  as an equation in imaginary coordinates.

Assume that $E_r( r_0, t_0) \geq 0$. Then

 \beq\label{Errb} E_{rr}(r_0,t_0) \leq { b \over  r ^{3/2}  }   E_{rr}(r_0,t_0) E_r^{1/2} (r_0,t_0) + \frac 4r E_r(r_0,t_0) + Cr^3E^{ 1/2}_r(r_0,t_0) \eeq
where $b=C_0 / \nu , \, C=C(p,a, \nu , \ga )>0 $.

From inequality \eqref{SN2.7}  follows that for any $C,b>0$ there is $ K >0, \, K=K(C,b)$ such that for $k>K$ function $y_k(x) $ is a subsolution of a differential inequality on 
$(0, \de ) ,  \, \de = 1/ k^{3/2} $,

 \beq\label{diffin}   y''_k> { b\over x^{3/2}   }  (y''_k)(y'_k)^{1/2} + \frac 4x y'_k + C x^3(y'_k)^{1/2} \eeq

 For constant $K=K(C,b)$ by Theorem \ref{L} and Proposition \ref{pr}  follows that there are $t_2,t_3, \, 0<t_2<t_3 <t_1$ and sufficiently large constant $m$ such that for
 Faedo-Galerkin approximations $W=W=U+iV =W^m= U^m+ iV^m$ the following inequalities hold
 
 $$ {\pa^5 E(0,t_2) \over \pa r^5 } =  K  , $$
 
  $$ {\pa^5 E(0,t_3) \over \pa r^5 } = 2 K   $$
 and for $t_2 <t<t_3$
 
  $$ {\pa^5 E(0,t) \over \pa r^5 } >  K   $$
  
  For $t_2 < t < t_3 $ from the inequality \eqref{SN-1} follows
  
   \beq\label{SN-3}    E_r ( \de , t) >  y'_k ( \de ),  \eeq

 Define $\Phi (r,t) $ on $ (0, \de ) \times ( 0 , \tau ), \, \tau = t_3-t_2 $,

$$ \Phi (r,t) = y(r) - E( r,t -t_2) $$
Then

$$\Phi (0,t) =0, $$
from inequality \eqref{SN-3}

 \beq\label{SN-4} \Phi' (\de,t) > 0 \eeq

Denote

$$ \phi (t) = \sup_{ 0<r< \al /2} \Phi ( r,t) $$
Then from inequality \eqref{SN-6}

$$ \phi (0) > 0 $$ 

From inequality \eqref{SN-2}

$$ \phi (\tau) < 0 $$
Hence there exists $T \in ( 0, \tau  )$ such that

$$ \phi' (T) < 0 $$

Let $0  \leq r_0 \leq \de $ be such that

$$ \Phi( r_0, T) =\phi (T) $$

From inequality \eqref{SN-4} follows

$$ 0 < r_0 < \de $$
Therefore,

 \beq\label{E1}  E_r ( r_0,T ) =y' (r_0) ,\eeq
 
 \beq\label{E2}  E_t (r_0, T) \geq 0 \eeq
and since at the point $r_0$ function $ \Phi ( r, T)$ attains its supremum

 \beq\label{E3} E_{rr} (r_0 ,T ) \geq y'' (r_0 ) \eeq

 \includegraphics[width=4.5in]{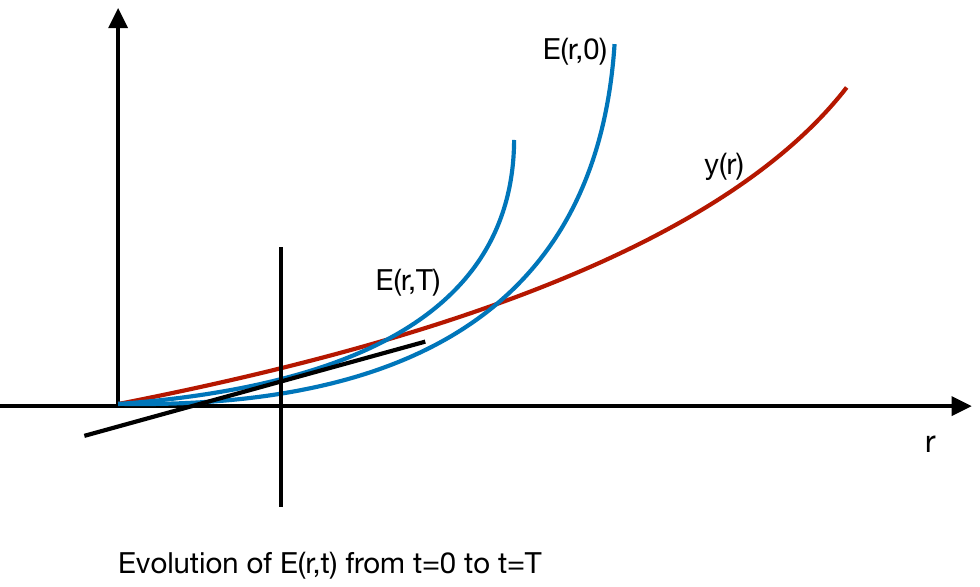}  

 Denote

 $$ S(r, y'', y') = 1- { b \over r^{3/2}  } (y')^{1/2}- \frac 4r {y' \over (y'')} - Cr^3 {(y')^{1/2} \over y'' }$$ 
 Then we can rewrite the inequality  \eqref{diffin} as
 
 $$ S( r, y''(r), y'(r)) > 0 $$
 
 Since $y'', y' > 0, \, S $ is an increasing  function of $y''$. Then from \eqref{E1}, \eqref{E3} follows
 
  $$ S( r_0, E''(r_0,T), E'(r_0, T)) > 0 $$
  Thus
 
 $$ E_{rr}(r_0,T)  > { b \over  r ^{3/2} }   E_{rr}(r_0,T) E_r^{1/2}(r_0,T) $$$$ + \frac 4r E_r(r_0,t_0) + Cr^3E^{1/2}_r(r_0,T)  $$

The last inequality and \eqref{E2}  contradict  \eqref{Errb}.

Therefore $w$ is a classical solution of the Navier-Stokes equations in $M \times (0, \infty)$, 
By  uniqueness theorem for solutions of the Navier-Stokes equations in $L^3$,  [G],  $w$ is a unique solution of the initial value problem \eqref{NS} - \eqref{CP} with a given
 initial data $w^0$.
 

 
  



Assume now that $w(x,t) $ is a weak Leray-Hopf solution of problem \eqref{NS} - \eqref{CP}  with the initial data $w^0$ in $L^2( \pa \Om) $. Then 
$ w\in L^2 ( \R_+ ) H^1 ( M) $. Thus there is a sequence $t_i >0,\, t_i \to 0 $ as $i\to \infty $ such that 
$ w( \cdot , t_i) \in H^1( M) \in L^6 (M) $. Then  from  proved above regularity of the solutions for $p>3$ it follows that for any $t_i$ $w$ is a classical solutions on $M\times ( t_i, \infty )$
and Theorem \ref{NN} follows.

\section{Discusion  }

We discuss here miscellaneous issues on possible extensions of the main result.

{\bf 1. How close are blowing up systems to Navier-Stokes equation?}

Regarding  regularity results for a concrete nonlinear differential equation, the first rising question, is it possible to include the given equation into a class
of equations for which the same regularity holds, for instance, for the equations with a similar nonlinearity, etc. 
We discuss below the blowing examples for models closely related to the three-dimensional Navier-Stokes system.  

Li and Sinai, [LS] , [BLS] gave an example of a singular solution for a system generated from complex valued solution of the Navier-Stokes equation, defined on the
real space. Though the system is similar to usual 
Navier-Stokes equations, its solutions do not satisfy the energy inequality and can develop a singularity. Recently Sverak has shown that quaternion
values solutions of one-dimensional Burgers equation develop a singularity, [S].  Notice, that complex or quaternionic  valued  Navier-Stokes equations are different from the Navier-Stoces equations in complex space. We consider the complexification of the vector space of the arguments of the solutions,
 $\R^3 \otimes \C$. The extension of the field of  arguments of the equations decrease the set of its solutions  since
it makes the system overdetermined,  it gives additional equations which can help to prove the regularity
of the solution. If instead of arguments  we extend the field of values of the solution, it will give more room for the singular solutions. See in this
connection, [NTV].

Tao in [T] suggested a model,  he called ``averaged Navier-Stokes equation", which solutions satisfy the energy inequality. Let $u$ be a sufficiently
regular solution of Navier-Stokes equations \eqref{NS} on  $\R^3 \times (0, \infty) $. Then one can rewrite \eqref{NS} as

$$ \pa_t u - \nu \De u + P(  u \cdot \nabla u ) =0 $$
where $P$ is the orthogonal projection of vector fields onto solenoidal vector fields,

$$ P u_i = u_i - \De^{-1}  \pa_i \pa_j  u_j  $$
Operator $P$ satisfies the identity,

$$ (Pv,v)=0 $$
for all solenoidal vector fields $v \in L^2(\R^3) $

Loosely speaking Tao proved the following theorem. There exists a pseudo-differential operator $D$ of order $0$ satisfying the identity

$$ ( Dv,v ) = 0 $$
for all  solenoidal vector fields $v\in L^2 (\R^3) $ such that for the Cauchy problem in $\R^3$ for the equation 
$$ \pa_t u - \nu \De u + D(  u \cdot \nabla u ) =0 $$
there is a smooth initial data  $u^0= u ( \cdot , 0) $ such that solution $u$ is blowing up at a finite time $t_0 >0$.

From the Tao's result it follows that  all estimates to solution of the  Navier-Stokes equations which could be obtained by ``standard means of the functional analysis" on $\R^3$  are also true for the  ``average Navier-Stokes" equations. Thus the regularity of the Navier-Stokes equations can be only a consequence of its concrete particular form. 

There are numerous examples of singular solutions for systems coupling Navier-Stokes equations with some other equations, like conservation of mass equation for compressible Navier-Stokes equations, Maxwell's equations in magnetohydrodynamics, etc.  

\smallskip  {\bf Euler equations. } The dissipation term in Navier-Stokes equations plays a crucial role in the proof of Theorem \ref{NN}. The theorem is not valid for weak solutions of the Euler equations as it follows from the results of Shnirelman [Sh] and De Lellis -  L. Székelyhidi Jr. [DS] . A general smooth solution of the Euler equations has no 
 holomorphic extension. However, for the Euler equations in Lagrangian
coordinates there is the real-analyticity of individual trajectories,  [CKV], and one can try to apply the analytic framework to the individual trajectories, or to consider the
Arnold-Euler equation, [AK], on  the complexification of the infinite-dimensional Lie group of volume preserving diffeomorphisms of a manifold.


\smallskip {\bf Spatial dimension. }  We consider the problem of regular solution to Navier-Stokes equations only in dimension $3$. In dimensions $\geq 4$
our approach doesn't give the desirable outcome. The regularity of weak solutions of initial boundary value problem in a two-dimensional bounded domain with zero Dirchlet boundary data, in absence of external 
forces was first obtained by Leray, [Le2]. The problem was completely settled in dimension 2 
by Ladyzhenskaya, Lions and Prodi, [L1], [L2], [LP]. 

\smallskip {\bf Very weak solutions. } Theorem of Lions and Masmoudi [LM] shows that for $p>3$ and non-forced Navier-Stokes equations Theorem \ref{NN} will be valid if instead of
Leray-Hopf weak solutions we take very weak solutions without assuming any type of energy inequality. Though, further enlargement of the  set of weak solutions leads to non-forced weak solutions with
the energy increasing in time, [BV].


\smallskip {\bf Uniqueness of the weak solutions. } In $L^p (\R^3), \, 2\leq p < 3$, there is probably no uniqueness to the weak Leray-Hopf solutions
of Navier-Stokes equations, [JS]. It is not clear, are the arguments of [JS] do work in a compact case, for instance at the torus.  The influence of the
``infinity" in  parabolic Cauchy problems is very strong. We recall here a classical counterexample to the uniqueness of the Cauchy problem for the heat equation, [Ty]. For Navier-Stokes equations there is no uniqueness for the Cauchy problem in $L^\infty$, [Se].  Though, there is a uniqueness in $BMO^{-1}( \R^3) $ for mild solutions of the Navier-Stokes
equations,  [KT]. A non-uniqueness result for the forced Navier-Stokes equations with a singular in time force is given in a recent paper, [ABC].

\smallskip {\bf Global in time estimates for the solutions.} Assume that $w$ is a solution of \eqref{NS} - \eqref{CP} in absence of exterior forces, $f\equiv 0$,
and  the initial data $w^0 \in H^1(M)$. As it follows from [He], [G] there is a constant $C>0$ such that if

$$ t > { C\over \nu^{-5} } || w^0 ||^4_{H^1(M)} $$
then

$$ || w(\cdot , t) ||_{H^1(M)} \leq 1 $$
Assume now that $w^0 \in L^p(M), \, p>3$, and $|| w^0 ||_{L^p} =A$. Then from [GK], Theorem \ref{NN} and the uniqueness theorem, [LM] it follows that there is a constant $C=C(p,A, \nu ) >0$ such that 
for $t\in (0, \infty )$ holds the estimate

 \beq\label{DC}  || w(\cdot , t) ||_{L^p(M)} \leq C \eeq

Of course, for the solutions of Navier-Stokes equations with non-zero external forces there are no such uniform in time estimates. However, if $f$ is uniformly bounded in
$\Om_\rho \times (0, \infty ), \, \rho >0$, we suggest as a conjecture that inequality \eqref{DC} holds.

\smallskip {\bf Sharp constant $\al $.} We expect that the sharp constant $\al $ in inequality \eqref{SN12} is $ < 2$.  It's possible to construct examples of holomorphic solenoidal vector fields showing that for $\al <3/7$ the inequality is not valid.  

\smallskip {\bf External forces. } It is impossible to extend directly the proof of Theorem \ref{NN} to include the Navier-Stokes equations with smooth external
forces, or to consider other initial-boundary value problems. In the future work we plan  to address
these issues. 

\smallskip {\bf Stationary Navier-Stokes equations. } For independent of time solutions of the Navier-Stokes equations the differential inequality for
the complex energy \eqref{parce} takes a simple form of an ordinary differential inequality of the first  order for function $e(r)$.

Let $w(x)$ be a solution on $M$ of the Navier-Stokes equations

$$ - \nu \De w + w \cdot \nabla w -  \nabla p = f       $$
$$ \div \, w =0 $$

Assume that $w$ has a holomorphic extension in $\Om_R, \, R>0$. Then on $(0,R)$ holds the inequality 

$$ e_{r}  \leq { C \over \nu r ^{3/2} }   e_{r}  e^{1/2} + \frac 4r e  + Cr^3e^{1/2}$$

The last inequality contains an implicit relation between the Fourier spectrums of the external force $f$ and of the solution $w$.


 

 
  


  \bigskip

\centerline{REFERENCES} 

 \bigskip
 \medskip\noindent [AF] R.A. Adams, J.J.F. Fournier, {\it Sobolev Spaces}, Academic Press, 2003.
 
  \medskip\noindent [ABC] D. Albritton, E. Brué, M. Colombo, {\it Non-uniqueness of Leray solutions of the forced Navier-Stokes equations}, Annals Math. 196 (2022), 415-455
 
\medskip\noindent [AK] V.I. Arnold, B.A. Khesin, {\it Topological Methods in Hydrodynamics}, Springer, Berlin, 1998.


\medskip \noindent  [BLS] C. Boldrighini, D. Li, Ya.G. Sinai, {\it  Complex singular solutions of the 3-d Navier-Stokes equations and related real solutions}
J. Stat. Phys. 167 (2017), 1-13

\medskip \noindent  [B] M. Bramanti, {\it On the proof of Hörmander's hypoellipticity theorem}, Le Matematiche, 75 (2020), 3-26.

\medskip \noindent  [BB] M. Bramanti,  L. Brandolini, {\it Schauder estimates for parabolic nondivergence operator of Hörmander type}, J. Diff. Eq. 234
(2007), 177-245


\medskip \noindent  [BV] T. Buckmaster,  V. Vicol, {\it Nonuniqueness of weak solutions to the Navier–Stokes equation}, Ann. of Math. (2), 189 (2019), no. 1, 101-144.

\medskip \noindent  [CKV] P. Constantin, I. Kukavika, V. Vicol, {\it Contrast between Lagrangian and Eulerian analytic regularity properties of Euler equations}
 Ann. Inst. H. Poincaré C Anal. Non Linéaire 33 (2016), no. 6, 1569-1588.
 
 \medskip \noindent  [C] C. Corduneanu, {\it Almost periodic functions}, Chelsea Publishing Company, N.Y., 1989 
 
 \medskip \noindent  [D] B.E.J. Dahlberg, {\it Weighted norm inequalities for Lusin area integral and the nontangential maximal function for functions harmonic in a Lipschitz domain}, Studia Math. 67 (1980), 297-314


\medskip \noindent  [DS] C. De Lellis and L. Székelyhidi Jr., {\it Dissipative continuous Euler flows}, Invent. Math. 193 (2013), no. 2, 377-407


 

\medskip\noindent [ESS] L. Escauriaza, G. Seregin, V. Sverák, {\it $L^{3,\infty} $ -solutions of Navier- Stokes equations and backward uniqueness },
Russian Math. Surveys 58 (2003), 211-250.

\medskip\noindent [FS] C.L. Fefferman, A. Sánchez-Calle, {\it Fundamental solutions for second order subelliptic operators}, Annals Math. 124 (1986), 247-272


\medskip\noindent [F] M. Freeman, {\it The Levi form and local complex foliations}, Proc. Am. Math. Soc. 57 (1976), 369-370


\medskip\noindent [G] G.P. Galdi {\it An Introduction to the Navier-Stokes Initial-Boundary Value Problem } Fundamental directions in mathematical fluid mechanics, 1-70, Adv. Math Fluid Mech., Birkhäuser, Basel, 2000.
 
\medskip\noindent [GK] Z. Grujić, I. Kukavica {\it  Space Analyticity for the NavierStokes and Related Equations with Initial Data in $L^p$ }, J. Func. An. 152
(1998), 447-466. 




\medskip \noindent [He] J.G. Heywood, {\it The Navier-Stokes Equations: On the Existence, Regularity and Decay of Solutions}, Indiana Univ. Math. J., 29
(1980), 639-681 

\medskip \noindent [HP] M. Hieber, J. Prüss, {\it Heat kernels and maximal $L^p -L^q$ estimates for parabolic evolution equations }, Comm. Part. Diff. Eq. 22 (1997),
1647-1669.
 
\medskip \noindent [Ho] E. Hopf, {\it \"Uber die Anfganswertaufgabe f\"ur die Hydrodynamischen Grundgleichungen,} Math. Nachr. 4(1950), 213--231.

\medskip \noindent [H1] L. Hörmander {\it Hypoelliptic second-order differential equations},  Acta Math. 119, (1967), 147-171 

\medskip \noindent [H2] L. Hörmander {\it An Intoduction to Several Complex Variables }, Van Nostrand, Princeton, NJ, 1966 

\medskip \noindent  [JK] D. Jerison, C.E. Kenig, {\it The inhomogeneous Dirichlet problem in Lipschitz domains}, J. Functional  An. 130 (1995), 161-219

\medskip \noindent  [JS] D. Jerison, A. Sánchez-Calle, {\it Estimates for the heat kernel for a sum of squares of vector fields}, Indiana J. Math., 35 (1986),
835-854

\medskip \noindent  [JS] H. Jia, V. Šverák, {\it Are the incompressible 3d Navier-Stokes equations locally ill-posed in the natural energy space?}, J. Func. Anal. 268(12), (2015), 3734-3766 

\medskip \noindent [KT] H. Koch, D. Tataru, {\it Well Posedness for the Navier-Stokes equations }, Adv. Math. 157 (2001), 22-35

\medskip \noindent [K] T. Kato, {\it Strong solutions of the Navier-Stokes equations in $\R^m$ with applications to weak solutions}, Math. Z. 187 (1984), 471-480 

\medskip \noindent [Ko] J.J. Kohn {\it Pseudo-differential operators and hypoellipticity}, (Proc. Sympos. Pure Math., Vol. XXIII, Univ. California, Berkeley,
Calif., 1971), pp. 61-69. Amer. Math. Soc., Providence, R.I., 1973.


\medskip \noindent [KS] S. Kusuoka, D. Stroock, {\it Applications of the Malliavin calculus. II}, J. Fac. Sci. Univ. Tokyo Sect. IA Math. 32 (1985),  1-76


\medskip \noindent  [L1] O. Ladyzhenskaya, {\it Solution in the Large of the Nonstationary Boundary-Value Problem for the Navier-Stokes System for the Case of Two Space Variables}, Doklady Akad. Nauk. SSSR, 123, (1958) 427 (in Russian)

\medskip \noindent  [L2] O. Ladyzhenskaya, {\it The Mathematical Theory of Viscous Incompressible Flow,} Gordon and Breach, NY, 1969.

\medskip \noindent  [LSU] O.A. Ladyzhenskaya, V.A. Solonikov, N.N. Uralćeva, {\it Linear and Quasilinear Equations of Parabolic Type }, AMS, Providence,
R.I., 1968. 

 
\medskip\noindent  [Le] J. Leray, {\it Sur le Mouvements d'un Liquide Visqueux Emplissant l'Espace,} Acta Math. 63(1934), 193--248.

\medskip\noindent  [Le2] J. Leray, {\it Essai sur les Mouvements Plans d'un Liquide Visqueux que Limitent des Parois}, J. Math. Pures Appl. 13 (1934), 331-418

\medskip \noindent  [LR] P.G. Lemari\'e-Rieusset, {\it The Navier-Stokes Problem in the 21 Century}, CRC Press, Evry, 2016.

\medskip\noindent  [LM1] J.-L Lions, E. Magenes,  {\it Problems aux limites non homogènes v.1}, Dunod, Paris, 1968 

\medskip\noindent  [LM2] J.-L Lions, E. Magenes,  {\it Problems aux limites non homogènes v.2}, Dunod, Paris, 1968 

\medskip\noindent  [LP] J.-L Lions, G. Prodi,  {\it Un Théorème d'Existence et Unicité dans les Equations de Navier-Stokes en dimension 2}, C. R. Acad. Sci. Paris,
248, (1959), 443-446.  

\medskip\noindent  [LM] P.-L Lions, N. Masmoudi, {\it Uniqueness of mild solutions of the Navier-Stokes system in $L^N$ }, Comm. Part. Diff. Eq. 26 (2001), 2211-2226 


\medskip \noindent  [LS] D. Li, Ya.G. Sinai, {\it  Blowups of complex solutions of the 3D Navier–Stokes system and renormalization group method }, J. Eur. Math. Soc. 10,  (2008) , 267-313.

\medskip \noindent  [LY] P. Li, S.-T. Yau, {\it On the parabolic kernel of the Schrödinger operator}, Acta Math. 156 (1986), 153-201 

\medskip\noindent [Ma] K. Masuda, {\it On analyticity and the unique continuation for solutions of the Navier-Stokes equation}, Proc. Japan Acad. 43 (1967),
183-186


\medskip\noindent [ML] A. Montanari, F. Lascialfari, {\it The Levi Monge-Ampère equation: smooth regularity of strictly Levi convex solutions}, J. Geom. Anal. 14 (2004),  2, 331-353



 
 \medskip\noindent [Mi]  M. Mitrea, {\it On Dahlberg's Lusin area integral theorem}, Proc. American Math. Soc. 123 (1995), 1449-1455  
 
 \medskip\noindent [NTV] N. Nadirashvili, V. Tkachev, S. Vlăduţ, {\it Nonlinear elliptic equations and nonassociative algebras}. Mathematical Surveys and Monographs, 200. American Mathematical Society, Providence, RI, 2014.
 
  \medskip\noindent [NSW] A. Nagel, E.M. Stein, S. Wainger, {\it Balls and metrics defined by vector fields I: Basic properties}, Acta Mathematica,
  155 (1985), 130-147
  
 
 \medskip\noindent [OR]  O.A. Oleinik, E.V. Radkevič, {\it Second order equations with nonnegative characteristic form}, Progress in Science, Mathematics Series,
 1969 (Russian)

\medskip\noindent [RS] L.P. Rothschild, E.M. Stein {\it  Hypoelliptic differential operators and nilpotent Lie groups},  Acta Math. 137,  (1977), 247-320

 \medskip \noindent [Sa] D.A. Salamon, {\it Parabolic $L^p - L^q $ estimates}, preprint ETH Zürich, 2017
 
  \medskip \noindent [Sh] A. Shnirelman, {\it Weak solutions with decreasing energy of incompressible Euler equations}, Comm. Math. Phys. 210 (2000), no. 3, 541-603

 
 
\medskip \noindent [Se] J. Serrin, {\it On the interior regularity of weak solutions of the Navier-Stokes equations,} Arch. Rat. Mech. Anal. 9(1962), 187--195.

  \medskip \noindent [So] F. Sommer, {\it Komplex-analytische Blãtterung reeller Mannigfalttigkeiten im $\C^n$}, Math. Ann. 136 (1958), 111-133 
  
  
    \medskip \noindent [St] R.S. Strichartz, {\it Analysis of the Laplacian on the complete Riemannian manifold}, J. Funct. Anal. 52 (1983), 48-79 
  

\medskip\noindent [Sv] V. Sverak {\it On singularities in the quaternionic Burgers equation}, Ann. Math. Qué. 46 (2022), 41-54

\medskip\noindent [T] T. Tao, {\it  Finite time blowup for an averaged three-dimensional Navier–Stokes equation},  J. Am. Math. Soc. 29(3), (2016),
601-674 

\medskip\noindent [Te] R. Temam, {\it Navier-Stokes equations}, North-Holland Publishing Company, Amsterdam-New York-Oxford, 1979

\medskip\noindent [Ty]  A.N. Tychonoff, {\it Uniqueness theorem for the heat equation }, Mat. Sb. 42 (1935), 199-216



\end{document}